\newtheorem{theorem}{Theorem}[section]
\newtheorem{proposition}[theorem]{Proposition}
\newtheorem{lemma}[theorem]{Lemma}
\newtheorem{corollary}[theorem]{Corollary}
\theoremstyle{definition}
\newtheorem{remark}[theorem]{Remark}
\newtheorem{question}[theorem]{Question}
\newenvironment{assumption-w-label}[1]
  {\innercustomass}
  {\endinnercustomass}
\newcommand{\ga}{\alpha}
\DeclareMathOperator{\diam}{diam}
\DeclareMathOperator{\Mob}{M\mathnormal{\ddot{\mathrm{o}}}b}
\def\D#1{|#1'|}
\def\Db#1{|(#1)'|}
\def\eps{\epsilon}
\DeclareMathOperator{\re}{Re}
\DeclareMathOperator{\ess}{ess}
\newcommand{\fratop}[2]{\genfrac{}{}{0pt}{}{#1}{#2}}
\title{Boundary representations of hyperbolic groups: the log--Sobolev case}
\author{Kevin Boucher}
\address{School of Mathematical Sciences, University of Southampton, Highfield, Southampton, SO17 1BJ, United Kingdom}
\email{kevin.boucher01@gmail.com}
\author{J\'{a}n \v{S}pakula}
\address{School of Mathematical Sciences, University of Southampton, Highfield, Southampton, SO17 1BJ, United Kingdom}
\email{jan.spakula@soton.ac.uk}
\date{\today}
\subjclass{20F67, 43A65, 46E36}
\keywords{hyperbolic groups, uniformly bounded representations, metric measure spaces, logarithmic Sobolev inequality}
\begin{document}

\begin{abstract}
    We study boundary representations of hyperbolic groups $\Gamma$ on the (compactly embedded) function space $W^{\log,2}(\partial\Gamma)\subset L^2(\partial\Gamma)$, the domain of the logarithmic Laplacian on $\partial\Gamma$. We show that they are not uniformly bounded, and establish their exact growth (up a multiplicative constant):  they grow with the square root of the length of $g\in\Gamma$. We also obtain $L^p$--analogue of this result.
    Our main tool is a logarithmic Sobolev inequality on bounded Ahlfors--David regular metric measure spaces.
\end{abstract}

\maketitle

\section{Introduction}\label{sec:intro}

To state the main results of this paper, let us start with a Gromov hyperbolic group $\Gamma$, acting properly, cocompactly, and by isometries, on a Gromov hyperbolic space $(X,\rho)$. The Gromov boundary $\partial X$ can be endowed with a visual metric $d$, and a Radon measure $\nu$, such that action of $\Gamma$ on $\partial X$ preserves the class of $\nu$ (i.e. maps sets of measure zero to sets of measure zero), hence the Radon--Nikodym derivative of the action of any $g\in \Gamma$ exists. Thus, for any $z\in\mathbb{C}$, $0\leq \re(z)\leq 1$, the formula
\begin{equation}\label{eq:pi_z}
	\pi_z(g)\phi(\xi) = \left(\tfrac{\mathrm{d}g_*\nu}{\mathrm{d}\nu}\right)^z\!(\xi) \cdot \phi(g^{-1}\xi)
\end{equation}
where $\xi\in \partial X$, $\phi:\partial X\to\mathbb{C}$, $g\in \Gamma$, formally defines a representation of $\Gamma$ on functions on $\partial X$. These are the so-called \emph{boundary representations}.

\medskip

In this paper, we focus on the case $\re(z)=\frac{1}{2}$ --- these are of course isometric on $L^2$; in the context of Lie groups they would be referred to as "principal series representations". The representation $\pi_{\frac{1}{2}}$ is sometimes referred to as the Koopman representation, or the quasi--regular representation. Nevertheless, there is a (compactly embedded) dense subspace of $L^2(\partial X)$ which is preserved under the representation, namely the domain of the so-called logarithmic Laplacian \cite{Gerontogiannis-Mesland:2023}; we denote this space $W^{\log,2}(\partial X)$ in this paper. We are interested in norm bounds for $\pi_z$ for the natural norm on $W^{\log,2}$ (which makes it into a Hilbert space).
We also address the $L^p$--analogue of this setup, $p\geq 1$.

\medskip

Let us provide some context for considering the space $W^{\log,2}$. On $\mathbb{R}^n$, there are fractional Sobolev spaces $W^{s,2}$ for $s\in(0,1)$ which interpolate between $L^2$ and the standard $W^{1,2}$. One way to describe the norm on $W^{s,2}$ is by way of a non-local Dirichlet form given by the convolution kernel $|\cdot|^{-n-2s}$. See for example \cite[Section 2]{Sobolev-Hitchhikers}. When appropriately normalised, the norms indeed converge to the $L^2$ and $W^{1,2}$ norms for $s\to0^{+}$ and $s\to 1^{-}$ respectively (see e.g.~\cite[p.9]{Sobolev-Hitchhikers} and references therein). However when the norms are \emph{not} normalised, for $s\to0^+$ they converge to the norm coming from the kernel $|\cdot-\cdot|^{-n}$, leading to the function space that we refer to $W^{\log,2}$ in this paper, cf.~\eqref{eq:defn-E_p,log} below.

In \cite{Boucher-Spakula:2023} the authors considered boundary representations on the fractional Sobolev spaces $W^{s,p}(\partial X)$ and proved that they are uniformly bounded. This piece deals with the `limit case' of $s=0$.

\medskip

For our results, we need the setting to be a little nicer than an arbitrary Gromov hyperbolic space $(X,\rho)$.
Any Gromov hyperbolic group $\Gamma$ acts properly and cocompactly on a \emph{strongly hyperbolic space} $(X,\rho)$ (with some parameter $\eps>0$), cf.~\cite{Nica-Spakula:2016} and references therein. 
One of the main features of strong hyperbolicity that we need is its $\text{CAT}(-1)$--like behaviour at infinity: if $o,x,y\in X$, denote the Gromov product by $\langle x,y\rangle_o = \frac{1}{2}(\rho(o,x)+\rho(o,y)-\rho(x,y))$. Then $\langle \cdot,\cdot\rangle_o$ extends continuously to $\partial X$ and the formula $d(\xi,\eta)=\exp(-\eps\langle\xi,\eta\rangle_o)$, $\xi,\eta\in \partial X$, defines a metric on $\partial X$. This is not generally true when $X$ is not-strongly hyperbolic.

Finally, let $D$ be the Hausdorff dimension of $(\partial X,d)$, and $\nu$ be the $D$-dimensional Hausdorff measure on $\partial X$. Then $\nu$ is \emph{Ahlfors--David $D$-regular}, i.e. $\nu(B(\xi,r))\asymp r^D$ for $\xi\in\partial X$, $r\in[0,\diam(\partial X)]$ and $B(\xi,r)$ denoting the closed ball about $\xi$ with radius $r$. This is true whenever $X$ is hyperbolic \cite{Coornaert:1993}. 

We record this setup as an Assumption for easy reference.

\begin{assumption-w-label}{(Hyp)}\label{ass:3}
	Let $\Gamma$ be a non--elementary Gromov hyperbolic group, acting properly and cocompactly on a roughly geodesic, strongly hyperbolic space $(X,\rho)$ (with parameter $\eps$). Fix $o\in X$. Denote $Z=\partial X$; $d(\xi,\eta)=\exp(-\eps\langle\xi,\eta\rangle_o)$ for $\xi,\eta\in Z$; and $\nu$ the $D$-dimensional Hausdorff measure on $(Z,d)$, which is Ahlfors--David $D$-regular.
\end{assumption-w-label}

Note that under this Assumption, $\diam(Z)=\nu(Z)=1$.

Before stating the results, we need to introduce a different notation ("reparametrise") the boundary representations \eqref{eq:pi_z}: this better matches the established notation for function spaces (e.g. $L^p$)\footnote{When $p=2$, the "$z$" from \eqref{eq:pi_z} and "$s+it$" here are related by $s+it=D(\frac{1}{2}-z)$. In particular, $s=0$ corresponds to $\re(z)=\frac{1}{2}$.}.
Let $p\geq1$, $s\in[-D/p,D/p]$, $t\in\mathbb{R}$. Denote
\begin{equation}\label{eq:pi-p-formula-first-version}
	\pi^{(p)}_{s+it}(g)\psi(\xi)=\Db{g^{-1}}^{\frac{D}{p}-s-it}(\xi)\cdot \psi(g^{-1}\xi),
\end{equation}
where $g\in \Gamma$, $\psi:Z\to \mathbb{C}$, $\xi\in Z$, and $\Db{g^{-1}}: Z\to(0,\infty)$ is the \emph{metric derivative} of the action of $g$ on $Z$: see Subsection \ref{subsec:setup} below, or \cite[Section 4]{Nica:2013}. Under Assumption \ref{ass:3}, its power is exactly the Radon--Nikodym derivative of the action.

\medskip

The final ingredient for the results is notation for some function spaces on $Z$. For a function $\phi:Z\to\mathbb{C}$, we use the notation $\|\phi|\mathbf{F}\|$ to denote the norm of $\phi$ in the function space $\mathbf{F}$, for example $\|\phi|L^p\|$. For $1\leq p<\infty$ and $\phi:Z\to\mathbb{C}$, denote
\begin{equation}\label{eq:defn-E_p,log}
	\mathcal{E}_{\log,p}(\phi) = \iint_{Z^2}\frac{\left|\phi(\xi)-\phi(\eta)\right|^p}{d^D(\xi,\eta)}\,\mathrm{d}\nu(\xi)\mathrm{d}\nu(\eta).
\end{equation}
For $p=2$, this is a Dirichlet form \cite{Fukushima-Oshima-Takeda:2011}, whose domain contains all $\alpha$-H\"{o}lder continuous functions for $0<\alpha\leq 1$. We denote its domain in $L^p(Z)$ by $W^{\log,p}(Z)$. Endowed with the norm $\|\phi|W^{\log,p}\|^p = \|\phi|L^p\|^p+\mathcal{E}_{\log,p}(\phi)$, it is a Banach space, dense in $L^p$ (see \cite[Section 4]{Boucher-Spakula:2023} for arguments), and reflexive for $p>1$. The "log" refers to the fact that it is the domain of the so-called logarithmic Laplacian on $Z$, see \cite{Gerontogiannis-Mesland:2023}.

\begin{theorem}\label{thm:main-s=0}
	In the setting of Assumption \ref{ass:3}, let $1\leq p<\infty$ and $t\in\mathbb{R}$. Then we have $\|\pi^{(p)}_{it}(g)\mid W^{\log,p}\to W^{\log,p}\|^p \asymp_{t} 1+\rho(go,o)$ for any $g\in\Gamma$.
\end{theorem}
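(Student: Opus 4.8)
The plan is to estimate the operator norm of $\pi^{(p)}_{it}(g)$ on $W^{\log,p}$ by separately understanding its action on the two pieces of the norm, namely $\|\cdot\mid L^p\|^p$ and the Dirichlet-form part $\mathcal{E}_{\log,p}$. Since $s=0$, the factor in \eqref{eq:pi-p-formula-first-version} is $\Db{g^{-1}}^{\frac{D}{p}-it}$, whose modulus is $\Db{g^{-1}}^{D/p}$; because this exactly matches the Radon--Nikodym derivative, the operator $\pi^{(p)}_{it}(g)$ is an isometry on $L^p$. So the entire growth must come from the Dirichlet energy $\mathcal{E}_{\log,p}$, and I expect the theorem to reduce to showing that $\mathcal{E}_{\log,p}(\pi^{(p)}_{it}(g)\psi)$ is comparable to $(1+\rho(go,o))\cdot\big(\|\psi\mid L^p\|^p + \mathcal{E}_{\log,p}(\psi)\big)$, uniformly in $\psi$ and with constants depending only on $t$.

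First I would write out $\mathcal{E}_{\log,p}(\pi^{(p)}_{it}(g)\psi)$ explicitly and change variables $\xi\mapsto g\xi$, $\eta\mapsto g\eta$ to move the $g$-action off the arguments of $\psi$ and onto the metric and the derivative factors. The key geometric input is the behaviour of the metric derivative and the visual metric under the $\Gamma$-action: under Assumption \ref{ass:3} one has the conformal-type identity $d(g\xi,g\eta) \asymp \Db{g^{-1}}^{-1/D}(g\xi)^{?}\cdots$, more precisely the standard cocycle relation $d(g\xi,g\eta)^D \asymp \Db{g}(\xi)^{1/?}\,d(\xi,\eta)^D$-type formula (I would pin down the exact exponents from \cite{Nica:2013}). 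After the change of variables the kernel $d^{-D}(g\xi,g\eta)$ and the Jacobian factors should combine, and the whole integrand for $\mathcal{E}_{\log,p}(\pi^{(p)}_{it}(g)\psi)$ ought to differ from that of $\mathcal{E}_{\log,p}(\psi)$ only by a multiplicative factor controlled by the ratio of derivatives at $\xi$ versus $\eta$, i.e. by a term like $\big|\log \tfrac{\Db{g^{-1}}(\xi)}{\Db{g^{-1}}(\eta)}\big|$ after expanding $|a-b|^p$ where $a,b$ carry the conformal weights. This is precisely where the logarithmic kernel $d^{-D}$ (rather than $d^{-D-\alpha}$) is essential: the log of the derivative quotient is, up to constants, a difference of Busemann-type functions, hence controlled by the Gromov product / boundary distance, producing the $\log(1/d(\xi,\eta))$ factor that the kernel $d^{-D}$ integrates against.

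Concretely, I would split $|\pi^{(p)}_{it}(g)\psi(\xi) - \pi^{(p)}_{it}(g)\psi(\eta)|^p$ using the elementary inequality $|AB-CD|^p \lesssim_p |A|^p|B-D|^p + |D|^p|A-C|^p$ (with $A,C$ the weight factors and $B,D$ the values of $\psi$), so that $\mathcal{E}_{\log,p}(\pi^{(p)}_{it}(g)\psi)$ is dominated by two integrals: a ``main'' term that after the substitution reproduces $\mathcal{E}_{\log,p}(\psi)$ (times a bounded factor from the equivalence of the kernels), and a ``commutator'' term of the shape $\iint |\psi(g^{-1}\eta)|^p\, \big|\Db{g^{-1}}^{D/p}(\xi)-\Db{g^{-1}}^{D/p}(\eta)\big|^p\, d^{-D}(\xi,\eta)\,\mathrm{d}\nu\,\mathrm{d}\nu$. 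The crux is to bound this commutator term by $(1+\rho(go,o))\|\psi\mid L^p\|^p$. Here I would use that the metric derivative is essentially $\exp(-\eps D \cdot(\text{Busemann cocycle}))$, whose pointwise logarithmic variation over a pair $(\xi,\eta)$ at scale $d(\xi,\eta)$ grows at most linearly in $\rho(go,o)$ and is Hölder in $d(\xi,\eta)$; combined with the log-Sobolev inequality on $(Z,d,\nu)$ advertised in the abstract, the integral $\iint d^{-D}(\xi,\eta)\,|\text{weight diff}|^p$ collapses to something like $\rho(go,o)$ times a mass term. This log-Sobolev inequality on bounded Ahlfors--David regular spaces is exactly the tool that converts the ``$\log(1/d)$-integrable'' kernel singularity into the clean $1+\rho(go,o)$ bound.

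I expect the main obstacle to be the \emph{lower} bound, i.e. showing $\mathcal{E}_{\log,p}(\pi^{(p)}_{it}(g)\psi) \gtrsim (1+\rho(go,o))\|\psi\mid W^{\log,p}\|^p$ for a suitable test function (the upper bound is morally the calculation above run with inequalities). For the lower bound I would not try to bound every $\psi$ from below — that is false and unnecessary — but instead exhibit, for each $g$, a specific test function $\psi_g$ (e.g. a bump or a normalised indicator adapted to a shadow/ball near the attracting fixed point of $g$, or a boundary point where $\Db{g^{-1}}$ concentrates) for which the commutator term is genuinely of size $\rho(go,o)$. The delicate point is controlling the interaction between the scale at which $\Db{g^{-1}}$ varies and the normalisation of $\psi_g$, so that the single dominant term in the energy is captured without being cancelled; quantifying the linear-in-$\rho(go,o)$ growth sharply (both directions, and uniformly in $t$ up to a $t$-dependent constant) is where the real work lies, since the $it$ phase must be shown not to create destructive interference that would spoil the matching lower bound.
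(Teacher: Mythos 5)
Your upper-bound scheme is essentially the paper's: change variables using \eqref{eq:change-of-var} and the geometric mean value property \eqref{eq:GMV}, split the integrand into a main term reproducing $\mathcal{E}_{\log,p}(\psi)$ plus a ``commutator'' term carrying the weight differences, and bound the resulting inner integral (the multiplier $\Theta_{g,t}$ of \eqref{eq:multipliers-defn}) by $\prec_t 1+\rho(go,o)$. Two corrections, though. First, that multiplier bound is not obtained from H\"older regularity of the Busemann cocycle ``combined with the log--Sobolev inequality''; it is a separate, purely geometric estimate (Proposition \ref{prop:bound-on-multipliers-1}), proved by splitting $Z$ into a ball about $\eta$, a ball about (the shadow of) $a=go$, and the complement, and estimating the kernel integral on each region via Lemmas \ref{lem:int-over-ball-calculation}--\ref{lem:int-over-ball-power-D}. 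The log--Sobolev inequality does nothing to tame that singularity. Second, your plan to pair the commutator against $\|\psi|L^p\|^p$ does work, because $\Theta_{g,t}$ turns out to lie in $L^\infty$ with norm $\prec_t 1+\rho(go,o)$ (Corollary \ref{coro:bound-on-multipliers-2}); the paper instead pairs $\|\Theta_{g,t}|L_{\exp}\|$ with $\|\psi|L^p\log L\|^p$ and then invokes Corollary \ref{cor:Wsp-in-LlogL}. Either way the upper bound goes through, modulo the geometric control you only gesture at.

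The genuine gap is the lower bound, which is half of the statement $\asymp_t$. You propose to exhibit, for each $g$, a bump or indicator $\psi_g$ adapted to a shadow of $g$ and to lower-bound the energy of $\pi^{(p)}_{it}(g)\psi_g$ directly, and you explicitly defer the hard points (normalisation of $\psi_g$, non-cancellation, uniformity in $t$). That deferred part is exactly what would need to be proved, and no argument is given. The paper's route is different and avoids all of these issues: take the single, $g$-independent test function $\mathbf{1}$ (of $W^{\log,p}$-norm $1$), and use the log--Sobolev inequality in the direction \emph{opposite} to how you use it --- as the continuous embedding $W^{\log,p}\hookrightarrow L^p\log L$ of Corollary \ref{cor:Wsp-in-LlogL} --- so that it suffices to prove $\|(\pi^{(p)}_{it}(g)\mathbf{1})^p \mid L\log L\| \succ \rho(go,o)$. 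Since $|\pi^{(p)}_{it}(g)\mathbf{1}|^p = \Db{g^{-1}}^{D}$, the phase $it$ disappears entirely (so your worry about ``destructive interference'' is moot), and the Luxemburg norm is estimated by a direct entropy computation: after a change of variables, $\int_Z\log^+\!\big(\D{g}^{-D}(\omega)/k\big)\,\mathrm{d}\nu(\omega) \geq -\tfrac1e + \tfrac{D\eps}{k}\big(\rho(go,o)-2C\big)$, where $C$ bounds $\int_Z\langle\omega,go\rangle_o\,\mathrm{d}\nu(\omega)$ uniformly in $g$ (Lemma \ref{lem:bound-on-integral-of-Gromov-product}, Proposition \ref{prop:LlogL-of-pi-g-1-lower-bound}). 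This reversal --- the log--Sobolev inequality used as a lower-bound device on $W^{\log,p}$ norms, not as a smoothing tool --- is the central idea your proposal is missing; without it (or a completed version of your bump-function construction), you have only proved the upper bound.
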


This is a precise estimate --- however one can show that these representations can not be uniformly bounded by a more general argument, as follows.
We observe in Proposition \ref{prop:main-subrep-of-ub} below that under mild hypotheses, a compactly embedded sub-rep\-re\-sen\-ta\-tion of a uniformly bounded representation can not be uniformly bounded. Let us recall some terminology first:
We say that a representation $\pi:\Gamma\to\mathcal{B}(V)$ of a group $\Gamma$ on a Banach space $V$ is \emph{uniformly bounded}, if $\sup_{g\in G}\|\pi(g)\| < \infty$. We say that $\pi$ is \emph{weakly mixing}, if for any $\theta\in V^{*}$ (the topological dual of $V$), any $v\in V$, and any $\varepsilon>0$, there exists $g\in\Gamma$ such that  $|\theta(\pi(g)v)|<\varepsilon$. We say that $\pi$ is \emph{mixing}, if $|\Gamma|=\infty$ and for any $\theta\in V^{*}$ and any $v\in V$, we have $\lim_{g\to\infty} \theta(\pi(g)v)=0$.

\begin{proposition}\label{prop:main-subrep-of-ub}
    Let $\pi: G\to \mathcal{B}(V)$ be a weakly mixing, uniformly bounded representation of a group $G$ on a Banach space $V$ with a separable dual $V^*$.
    Suppose that $W\subset V$ is preserved under $\pi$, and is endowed with a norm such that the inclusion $W\hookrightarrow V$ is compact.
    Then $\pi$ is not uniformly bounded on $W$.
\end{proposition}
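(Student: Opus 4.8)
The plan is to argue by contradiction. Suppose $\pi$ were uniformly bounded on $W$, say $\|\pi(g)\mid W\to W\|\le C_W$ for all $g\in G$, and fix a nonzero $w\in W$. I would then extract two competing pieces of information about the $V$-orbit $O_w=\{\pi(g)w:g\in G\}$. First, since $W$ is $\pi$-invariant and $\pi$ is uniformly bounded on $W$, the orbit lies in the $W$-ball of radius $C_W\|w\|_W$; as the inclusion $W\hookrightarrow V$ is compact, $O_w$ is therefore \emph{relatively compact} in $V$. Second, writing $C=\sup_g\|\pi(g)\mid V\to V\|<\infty$ for the uniform bound on $V$ and using $\pi(g^{-1})\pi(g)=\id$, one gets $\|w\|_V=\|\pi(g^{-1})\pi(g)w\|_V\le C\,\|\pi(g)w\|_V$, so that $\|\pi(g)w\|_V\ge \|w\|_V/C>0$ for every $g$. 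Thus $O_w$ is simultaneously relatively compact in $V$ and uniformly bounded away from $0$.

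Next I would use weak mixing, together with the separability of $V^*$, to manufacture a sequence along which the orbit is \emph{weakly null}. Fix a countable dense subset $\{\theta_n\}_{n\ge1}$ of $V^*$. Since $O_w$ is norm-bounded in $V$ and $V^*$ is separable, the weak topology restricted to $O_w$ is metrisable and is detected by the $\theta_n$, so it suffices to produce a sequence $(g_k)$ with $\theta_n(\pi(g_k)w)\to0$ for every fixed $n$. I would build $(g_k)$ by a diagonal procedure: for each $k$, invoke weak mixing to select $g_k$ making $|\theta_n(\pi(g_k)w)|<1/k$ for all $n\le k$. For each fixed $n$ this yields $\theta_n(\pi(g_k)w)\to0$, that is $\pi(g_k)w\rightharpoonup0$ weakly in $V$.

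Finally I would close the argument by compactness. By relative compactness of $O_w$ in $V$, the sequence $\big(\pi(g_k)w\big)$ admits a norm-convergent subsequence $\pi(g_{k_j})w\to v_0\in V$. Norm convergence forces weak convergence to the same limit, so $v_0$ is the weak limit of that subsequence; but the full sequence is weakly null, whence $v_0=0$. On the other hand $\|v_0\|_V=\lim_j\|\pi(g_{k_j})w\|_V\ge\|w\|_V/C>0$, a contradiction. Hence $\pi$ cannot be uniformly bounded on $W$.

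The two orbit estimates and the compactness extraction are routine; \emph{the main obstacle is the weakly-null sequence in the middle step.} Weak mixing as stated controls one functional at a time, whereas what is needed is a \emph{single} sequence $(g_k)$ along which the entire weak topology collapses. This is precisely where the separability of $V^*$ and the full strength of weak mixing are combined: separability reduces weak nullity to the countably many conditions $\theta_n(\pi(g_k)w)\to0$, while weak mixing must be brought to bear on finitely many functionals simultaneously at each stage of the diagonal construction. Equivalently, the heart of the matter is to place $0$ in the weak closure $\overline{O_w}^{\,w}$, which coincides with the norm closure $\overline{O_w}^{\,\|\cdot\|_V}$ because $O_w$ is relatively norm-compact — and the second orbit estimate shows this closure cannot contain $0$.
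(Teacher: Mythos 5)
Your proposal is correct and follows essentially the same route as the paper's proof: contradiction, a countable dense sequence of functionals in $V^*$, a diagonal application of weak mixing to finitely many functionals at each stage to produce a weakly null orbit sequence, norm-compactness of the orbit via the compact embedding, and the uniform bound on $V$ to keep the norm limit away from $0$. The one point you flag as the "main obstacle" --- upgrading the single-functional weak mixing hypothesis to finitely many functionals simultaneously --- is handled in the paper exactly as you handle it, namely by asserting that the condition can be applied to finitely many functionals at a time and feeding that into the inductive construction.
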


\begin{remark}\label{rem:subrep-in-mixing-ub}
	By a similar argument as our proof of the above Proposition, one can prove the following variant: assume that $\pi$ is mixing and uniformly bounded, $V$ is an arbitrary Banach space, and $W\hookrightarrow V$ is compact.
    Then $\lim_{g\to\infty}\|\pi(g)\mid W\to W\|=\infty$.
\end{remark}

The above Proposition applies to our setup for $1<p<\infty$, since $\pi_{it}^{(p)}$ is isometric on $L^p$ (cf.~\eqref{eq:pi-p-preserves-Lp-norm}), mixing, and $W^{\log,p}$ is compactly embedded in $L^p$, as detailed in the Propositions below.
We discuss the case $p=1$ in Section \ref{sec:L1-case}.
We also note that while $\pi^{(p)}_{it}$ is isometric on $L^p$, the representation grows exponentially when considered on $L^r$ for $r\not=p$, see Proposition \ref{prop:pi-p-on-L-q-expo-growth}.

\begin{proposition}\label{prop:mixing-in-intro}
	Under Assumption \ref{ass:3}, for any $1< p<\infty$ and $t\in\mathbb{R}$, the representation $\pi^{(p)}_{it}$ is mixing on $L^p(\partial X)$.
	The representation $\pi^{(1)}_{0}$ is weakly mixing on the subspace $L^1_0$ of $L^1(\partial X)$ consisting of functions with zero average.
\end{proposition}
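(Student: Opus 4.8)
The plan is to handle both cases through the same dynamical mechanism, reducing throughout to continuous test functions (which are dense in every $L^q(Z,\nu)$, $1\leq q<\infty$, since $\nu$ is a finite Radon measure on the compact space $Z$) and using that $\pi^{(p)}_{it}$ is an $L^p$-isometry, so that matrix coefficients are equicontinuous in their two function arguments. Writing the coefficient of $\psi_1\in L^p$ against $\psi_2\in L^{p'}$ (with $1/p+1/p'=1$) and changing variables by $\xi=g\eta$ via the identity $\int_Z\D{g}^D(\eta)F(g\eta)\,\mathrm{d}\nu(\eta)=\int_Z F\,\mathrm{d}\nu$ (equivalently $\D{g}^D\mathrm{d}\nu=(g^{-1})_*\nu$), together with the cocycle relation $\Db{g^{-1}}(g\eta)=1/\D{g}(\eta)$, I obtain
\begin{equation*}
\int_Z \bigl(\pi^{(p)}_{it}(g)\psi_1\bigr)\,\overline{\psi_2}\,\mathrm{d}\nu=\int_Z \D{g}^{\frac{D}{p'}+it}(\eta)\,\psi_1(\eta)\,\overline{\psi_2(g\eta)}\,\mathrm{d}\nu(\eta).
\end{equation*}
Two facts then drive everything: applying the isometry to $\mathbf 1$ gives $\int_Z\Db{g^{-1}}^D\,\mathrm{d}\nu=1$, equivalently $\int_Z\D{g}^D\,\mathrm{d}\nu=1$; and by Ahlfors regularity $\nu(B(\zeta,\delta))\asymp\delta^D$.

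For $1<p<\infty$ I would prove mixing. By a $3\varepsilon$-argument resting on the isometric bound $|\int(\pi^{(p)}_{it}(g)\psi_1)\overline{\psi_2}\,\mathrm{d}\nu|\leq\|\psi_1|L^p\|\,\|\psi_2|L^{p'}\|$, it suffices to treat continuous $\psi_1,\psi_2$, and to show the coefficient tends to $0$ along every sequence $g_n\to\infty$; passing to subsequences, I may assume the north--south dynamics of the boundary action (cf.\ \cite{Nica-Spakula:2016,Coornaert:1993}) with $g_n^{-1}o\to\xi_-$, the point where $\D{g_n}^D\,\mathrm{d}\nu=(g_n^{-1})_*\nu$ concentrates. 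The key geometric input is $\sup_{\eta\notin B(\xi_-,\delta)}\D{g_n}(\eta)\to0$ for each $\delta>0$, which I would derive from the Busemann expression $\D{g}(\eta)=\exp(-\eps B_\eta(o,g^{-1}o))$ of \cite{Nica:2013}. Splitting $Z=B(\xi_-,\delta)\sqcup\bigl(Z\setminus B(\xi_-,\delta)\bigr)$: on the complement the uniform decay bounds the coefficient by $\|\psi_1\|_\infty\|\psi_2\|_\infty\,\nu(Z)\sup_{Z\setminus B(\xi_-,\delta)}\D{g_n}^{D/p'}\to0$; on the ball, Hölder (exponents $p',p$) with $\int_Z\D{g_n}^D\,\mathrm{d}\nu=1$ gives the $n$-independent bound $\|\psi_1\|_\infty\|\psi_2\|_\infty\,\nu(B(\xi_-,\delta))^{1/p}\lesssim\delta^{D/p}$. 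Letting $n\to\infty$ then $\delta\to0$ yields $0$; as $\Gamma$ is non-elementary, $|\Gamma|=\infty$, so $\pi^{(p)}_{it}$ is mixing.

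For $p=1$, $t=0$, I would first note that $\pi^{(1)}_0$ preserves $\int_Z\psi\,\mathrm{d}\nu$, so $L^1_0$ is invariant, while the coefficient of $\mathbf 1$ against $\psi\mapsto\int\psi\,\mathrm{d}\nu$ is constantly $1$, already obstructing mixing on $L^1$. Representing a functional on $L^1_0$ by $\phi\in L^\infty$, the coefficient is $\int_Z\psi(\eta)\overline{\phi(g\eta)}\,\mathrm{d}\nu(\eta)$; I reduce $\psi$ to a continuous zero-average $\psi'$ (the error being $\leq\|\psi-\psi'|L^1\|\,\|\phi\|_\infty$, uniform in $g$). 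Then I choose a Lebesgue point $\zeta$ of $\phi$ and, since the limit set is all of $Z$, a sequence $g_n\to\infty$ with $g_no\to\zeta$. Using $\int\psi'\,\mathrm{d}\nu=0$ to subtract the constant $\phi(\zeta)$,
\begin{equation*}
\Bigl|\int_Z\psi'\,\overline{\phi(g_n\cdot)}\,\mathrm{d}\nu\Bigr|=\Bigl|\int_Z\psi'(\eta)\bigl(\overline{\phi(g_n\eta)}-\overline{\phi(\zeta)}\bigr)\mathrm{d}\nu\Bigr|\leq\|\psi'\|_\infty\int_Z|\phi-\phi(\zeta)|\,\mathrm{d}\mu_n,
\end{equation*}
where $\mu_n=(g_n)_*\nu$. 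The shadow lemma \cite{Coornaert:1993} places $\mu_n$ on a ball $B(\zeta,r_n)$ with $r_n\to0$, comparable there to the normalised restriction of $\nu$, whence Lebesgue differentiation at $\zeta$ forces the last integral to $0$. Thus some $g_n$ makes the coefficient $<\varepsilon$, giving weak mixing on $L^1_0$.

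The \emph{main obstacle} differs between the cases. For $1<p<\infty$ the one genuinely geometric ingredient is the uniform derivative decay $\sup_{\eta\notin B(\xi_-,\delta)}\D{g_n}(\eta)\to0$, for which I rely on the Busemann-function form of the metric derivative and the north--south behaviour at infinity. For $p=1$ the crux---and the precise reason the conclusion drops from mixing to \emph{weak} mixing---is that the dual of $L^1_0$ is $L^\infty/\BC$ rather than $C(Z)/\BC$: the measures $\mu_n$ converge weak-$*$ to $\delta_\zeta$ against continuous functions but not against an arbitrary $\phi\in L^\infty$, so one cannot defeat a fixed adversarial $\phi$ for all large $g$. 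What rescues weak mixing is exactly the freedom to place the concentration point $\zeta$ at a Lebesgue point of the given $\phi$ and to invoke the shadow lemma; verifying this comparability with normalised balls (rather than mere weak-$*$ convergence) is the technical heart of the $p=1$ argument.
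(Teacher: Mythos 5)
Your argument for $1<p<\infty$ is correct, and it takes a genuinely different route from the paper's. The paper bounds the coefficient (after the same density reduction to bounded functions) by $\|\phi|L^\infty\|\cdot\|\psi|L^\infty\|\cdot\int_Z \Db{g^{-1}}^{D/p}\,\mathrm{d}\nu$ and then invokes Proposition \ref{prop:int-g'-to-a-power}, whose proof computes the exact asymptotics of $\Xi_T(g)=\int_Z\D{g}^T\,\mathrm{d}\nu$ via a shadow--lemma decomposition of $\partial X$ into annuli; mixing follows since $\Xi_{D/p}(g)\to0$ because $0<D/p<D$. You avoid these asymptotics entirely: after the change of variables, north--south dynamics along a subsequence gives the uniform decay $\sup_{Z\setminus B(\xi_-,\delta)}\D{g_n}\to0$ (which indeed follows from the Gromov--product formula for $\D{g}$, the bound $\langle\eta,\xi_-\rangle_o\leq\frac{1}{\eps}\log(1/\delta)$ off the ball, and hyperbolicity), while on $B(\xi_-,\delta)$ H\"older with exponents $(p',p)$, the identity $\int_Z\D{g_n}^D\,\mathrm{d}\nu=1$, and Ahlfors regularity give the $n$-uniform bound $\prec\delta^{D/p}$. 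Your version is softer --- no quantitative shadow--lemma estimate, only the change-of-variable identity and elementary boundary dynamics --- at the price of a subsequence extraction; the paper's sharper Proposition \ref{prop:int-g'-to-a-power} is in any case needed elsewhere (Proposition \ref{prop:pi-p-on-L-q-expo-growth}). Both are valid.

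The $p=1$ case has a genuine gap, sitting exactly at the step you flag as the technical heart. You pick a Lebesgue point $\zeta$ of $\phi$ and \emph{any} sequence with $g_no\to\zeta$, and claim that $\mu_n=(g_n)_*\nu$ is carried by a ball $B(\zeta,r_n)$, comparably to the normalised restriction of $\nu$. Both halves of this fail. First, the density $\Db{g_n^{-1}}^D$ of $\mu_n$ is a Poisson-kernel-type function: with $r_n=\exp(-\eps\rho(g_no,o))$ and $\zeta_n$ a shadow point of $g_no$, it is $\asymp\min\bigl(r_n^{-D},\,r_n^{D}d(\cdot,\zeta_n)^{-2D}\bigr)$, so that $\mu_n\bigl(Z\setminus B(\zeta_n,Kr_n)\bigr)\asymp K^{-D}$; the tails are polynomial, not supported in shrinking balls, and must be handled by a separate splitting (tail mass $\prec K^{-D}$ uniformly in $n$, then $K\to\infty$ after $n\to\infty$). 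Second, and more seriously, the ball carrying the bulk of $\mu_n$ is centred at $\zeta_n$, whose distance to $\zeta$ is $\asymp\exp(-\eps\langle g_no,\zeta\rangle_o)$. Mere convergence $g_no\to\zeta$ gives $d(\zeta_n,\zeta)\to0$ but not $d(\zeta_n,\zeta)=O(r_n)$: if $\langle g_no,\zeta\rangle_o=\rho(g_no,o)-M_n$ with $M_n\to\infty$ (tangential approach), then $d(\zeta_n,\zeta)/r_n\asymp e^{\eps M_n}\to\infty$, and the Lebesgue point property at $\zeta$ --- which controls averages over balls centred at $\zeta$ --- produces a bound carrying the unbounded factor $\bigl(d(\zeta_n,\zeta)/r_n\bigr)^D$. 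This is the classical Fatou/Littlewood phenomenon: boundary convergence of Poisson-type integrals of $L^\infty$ data holds non-tangentially but can genuinely fail along tangential approach, so no rearrangement of the estimate using only the Lebesgue point property at $\zeta$ can rescue the step as written.

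The repair is to demand that $g_no\to\zeta$ \emph{non-tangentially}, i.e. $\langle g_no,\zeta\rangle_o\geq\rho(g_no,o)-M$; under Assumption \ref{ass:3} every boundary point admits such an orbit sequence (conical limit points, using cocompactness), so weak mixing on $L^1_0$ still follows once the tail splitting above is carried out. This is exactly how the paper proceeds: its key input, Lemma \ref{lem:Bader-Furman-SAT} (quoted from \cite{Bader-Furman:2017}), is precisely the convergence statement you are trying to prove, and non-tangential convergence is a \emph{hypothesis} of that lemma, not an optional convenience. So your overall strategy for $p=1$ --- concentration of pushforward measures at a Lebesgue point of the given $L^\infty$ functional --- coincides with the paper's, but the stated mechanism is false as written; with non-tangentiality added and the polynomial tails treated, it becomes a correct proof.
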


\begin{proposition}\label{prop:main-cpct-embedding}
	In the setting of Assumption \ref{ass:3}, let $1<p<\infty$. Then $W^{\log,p}(Z)$ is compactly embedded in $L^p(Z)$.
\end{proposition}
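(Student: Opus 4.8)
The plan is to prove compactness of the embedding $W^{\log,p}(Z)\hookrightarrow L^p(Z)$ via a Rellich--Kondrachov-type argument adapted to the Ahlfors--David regular metric measure space $(Z,d,\nu)$. The standard strategy is to show that a bounded subset $\mathcal{S}\subset W^{\log,p}$ is totally bounded in $L^p$, and for this I would combine two ingredients: (i) a uniform $L^p$-equicontinuity estimate provided by control of the averaging/mollification error in terms of $\mathcal{E}_{\log,p}$, and (ii) a finite-dimensional approximation by conditional expectations onto a finite $\sigma$-algebra. Concretely, for each scale $r>0$ I would introduce the averaging operator
\begin{equation*}
	(A_r\phi)(\xi) = \frac{1}{\nu(B(\xi,r))}\int_{B(\xi,r)}\phi(\eta)\,\mathrm{d}\nu(\eta),
\end{equation*}
and aim to show that $\|\phi - A_r\phi\mid L^p\|\to 0$ uniformly over $\mathcal{S}$ as $r\to 0$.

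First I would estimate $\|\phi - A_r\phi\mid L^p\|^p$. Using Jensen's inequality and the AD-regularity $\nu(B(\xi,r))\asymp r^D$, one bounds this by a double integral of $|\phi(\xi)-\phi(\eta)|^p$ over pairs with $d(\xi,\eta)\leq r$, weighted by $\nu(B(\xi,r))^{-1}\asymp r^{-D}$. The key point is to compare $r^{-D}$ with the kernel $d(\xi,\eta)^{-D}$ appearing in $\mathcal{E}_{\log,p}$: on the region $d(\xi,\eta)\leq r$ we have $d(\xi,\eta)^{-D}\geq r^{-D}$, so naively the averaging weight is \emph{dominated} by the Dirichlet kernel, which gives
\begin{equation*}
	\|\phi - A_r\phi\mid L^p\|^p \lesssim \iint_{d(\xi,\eta)\leq r} \frac{|\phi(\xi)-\phi(\eta)|^p}{d^D(\xi,\eta)}\,\mathrm{d}\nu(\xi)\mathrm{d}\nu(\eta).
\end{equation*}
Since $\mathcal{E}_{\log,p}(\phi)<\infty$ is finite and uniformly bounded on $\mathcal{S}$, the tail of this integral (restricting to $d(\xi,\eta)\leq r$) tends to $0$ as $r\to 0$ by dominated convergence, \emph{uniformly} over $\mathcal{S}$ because the integrand is controlled by the single integrable majorant coming from the uniform bound on $\mathcal{E}_{\log,p}$. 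This is the crucial structural advantage of the logarithmic form: the $d^{-D}$ kernel is exactly critical for the $D$-dimensional averaging, and it is precisely this matching that I expect to be the main obstacle to make rigorous, since the two quantities are comparable rather than one being a lower-order perturbation of the other, so one must be careful that the restricted-integral tail genuinely vanishes and does so uniformly.

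Next I would handle the finite-dimensionality. For fixed $r$, I would cover $Z$ by finitely many balls (possible since $Z$ is compact) and replace $A_r\phi$ by its conditional expectation $E_{\mathcal{P}}\phi$ onto the finite partition $\mathcal{P}$ generated by a $\delta$-net, showing that the map $\phi\mapsto E_{\mathcal{P}}\phi$ has finite-dimensional range and that $\|A_r\phi - E_{\mathcal{P}}\phi\mid L^p\|$ is small by a similar Dirichlet-form estimate. The image $E_{\mathcal{P}}(\mathcal{S})$ lives in a finite-dimensional space and is bounded in $L^p$, hence totally bounded. Combining the two approximations via a standard three-$\varepsilon$ argument, given any $\varepsilon>0$ one chooses $r$ small so that $\|\phi-A_r\phi\mid L^p\|<\varepsilon$ uniformly, then refines $\mathcal{P}$ so that $\|A_r\phi - E_{\mathcal{P}}\phi\mid L^p\|<\varepsilon$, obtaining a finite $\varepsilon$-net for $\mathcal{S}$ in $L^p$.

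Finally, I would assemble these pieces to conclude that $\mathcal{S}$ is totally bounded, hence relatively compact, in $L^p(Z)$, which is exactly the statement that the inclusion $W^{\log,p}\hookrightarrow L^p$ is compact for $1<p<\infty$. The reflexivity of $W^{\log,p}$ for $p>1$ could alternatively streamline the argument: one could instead take a bounded sequence, extract a weakly convergent subsequence in $W^{\log,p}$, and upgrade weak convergence to norm convergence in $L^p$ using the equicontinuity estimate above; but the total-boundedness route is more self-contained and I would prefer it. The only genuinely delicate estimate, as noted, is the uniform vanishing of the truncated Dirichlet energy, and I would devote the bulk of the writeup to justifying that the AD-regularity constants and the critical $d^{-D}$ kernel cooperate to give uniform control over the bounded set $\mathcal{S}$.
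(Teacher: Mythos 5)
Your reduction of $\|\phi-A_r\phi\mid L^p\|^p$ to the truncated energy $\iint_{d(\xi,\eta)\leq r}|\phi(\xi)-\phi(\eta)|^p\,d^{-D}(\xi,\eta)\,\mathrm{d}\nu\,\mathrm{d}\nu$ is correct (Jensen plus $d^{-D}\geq r^{-D}$ on that region), but the step you yourself flag as delicate is in fact false: a uniform bound $\sup_{\phi\in\mathcal{S}}\mathcal{E}_{\log,p}(\phi)\leq M$ is a bound on numbers, not a ``single integrable majorant'' --- each $\phi\in\mathcal{S}$ has its own integrand, so dominated convergence only gives vanishing of the truncated energy for each \emph{fixed} $\phi$, never uniformly over $\mathcal{S}$. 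The uniformity genuinely fails for the critical kernel $d^{-D}$: take $Z=[0,1]$ with Lebesgue measure ($D=1$, $p=2$, realized e.g.\ as the boundary circle of a surface group) and $\phi_n(x)=(\log n)^{-1/2}e^{2\pi inx}$. Then $\|\phi_n\mid L^2\|\to 0$ and $\mathcal{E}_{\log,2}(\phi_n)\asymp 1$, so $(\phi_n)$ is bounded in $W^{\log,2}$; but the energy of $\phi_n$ lives at scale $1/n$, and for every fixed $r$ the truncated energy at scale $r$ is $\asymp\log(nr)/\log n\to 1$, i.e.\ it converges to the \emph{full} energy rather than to $0$. Hence $\sup_{n}\iint_{d\leq r}\cdots$ stays bounded away from $0$ for every $r$, and your three-$\varepsilon$ scheme (and equally the weak-convergence variant, which needs the same uniform equicontinuity) cannot be completed. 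The problem is structural: any normalization of a small-scale average by $\nu(B(\xi,r))^{-1}\asymp r^{-D}$ compares the error to the energy \emph{near} the diagonal, which a bounded set can push to arbitrarily small scales.

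The paper's proof sidesteps truncating the energy entirely by flipping the cutoff. Instead of averaging over $B(\xi,r)$, it uses the \emph{off-diagonal} integral operator $T_{K_\eps}$ with kernel $K_\eps=\chi_{\{d>\eps\}}\cdot d^{-D}$, normalized not by a power of $\eps$ but by the kernel's own mass $N_\eps(\xi)=\int_Z K_\eps(\xi,\omega)\,\mathrm{d}\nu(\omega)$, which \emph{diverges}: $N_\eps\succ\log(\diam(Z)/\eps)$ by Lemma \ref{lem:sobolev-for-a-ball}. Jensen's inequality then yields
\begin{equation*}
	\big\|\phi-\tfrac{1}{N_\eps}T_{K_\eps}\phi \mid L^p\big\|^p \;\leq\; (\ess\inf N_\eps)^{-1}\,\mathcal{E}_{\log,p}(\phi),
\end{equation*}
i.e.\ the \emph{full} energy divided by a constant tending to infinity --- manifestly uniformly small on bounded sets of $W^{\log,p}$, with no truncated-energy claim needed. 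Since each $K_\eps$ is a bounded kernel on a finite measure space, $T_{K_\eps}$ is compact on $L^p$ for $1<p<\infty$ (Hille--Tamarkin), and compactness of the embedding follows. This is exactly Propositions \ref{prop:compact-embedding-general} and \ref{prop:compactness-of-kernel-ops}; the logarithmic divergence of the mass of $d^{-D}$ is precisely the feature of the critical exponent that your $r^{-D}$-normalized averaging throws away.
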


In fact this is true more generally, Assumption \ref{ass:1} below is sufficient, see Section \ref{sec:compact-embedding}.

\medskip

Finally, as we require this for the proof of Theorem \ref{thm:main-s=0}, we prove a logarithmic Sobolev inequality in general Ahlfors--David regular spaces, cf.~Theorem \ref{thm:log-sobolev-ineq}. This is perhaps known to the experts in the area, but we were unable to find a proof in the literature in this generality, and thus we provide a proof.

\subsection{Acknowledgements}
The authors were supported (JS partially) by UK's Engineering and Physical Sciences Research Council (EPSRC) Standard Grant EP/V002899/1.

\section{Preliminaries}\label{sec:preliminaries}

\subsection{Setup}\label{subsec:setup}
The results in this paper hold in varying degree of generality, so let us record the assumptions here, in decreasing strength. The strongest setup is that of the above Assumption \ref{ass:3}; however in some Sections we in fact do not need the "inside" (i.e. the space $(X,\rho)$), but only a metric measure space $Z$ thought of as "a boundary" (i.e. $Z=\partial X$ in \ref{ass:3}).

\begin{assumption-w-label}{(Haus)}\label{ass:2}
	Let $(Z,d)$ be a bounded metric space of Hausdorff dimension $D$, such that its $D$-dimensional Hausdorff measure $\nu$ is Ahlfors--David $D$-regular.
\end{assumption-w-label}

This is provides sufficient link between the measure and the metric to perform some calculations involving the M\"obius self-maps of $Z$. (For example, the uniform boundedness results of \cite{Boucher-Spakula:2023} apply in this generality.) Let us recall the formulas we need.

Recall that a M\"obius map is a function $g:(Z,d)\to (Y,d')$ between metric space which preserves cross-ratios, i.e.~the quantity $[x,y;z,w] = \frac{d(x,z)d(y,w)}{d(x,w)d(y,z)}$, whenever this expression makes sense.
We denote by $\Mob(Z)$ the group of self-homeomorphisms of $(Z,d)$. For any $g\in \Mob(Z)$ there exists a \emph{metric derivative} of $g$, that is, a Lipschitz function $\D{g}:Z\to(0,\infty)$ satisfying the \emph{geometric mean value property}
\begin{equation}\label{eq:GMV}
	d^2(g\xi,g\eta)=\D{g}(\xi)\,\D{g}(\eta)\, d^2(\xi,\eta)\qquad \text{for all }g\in\Mob(Z),\, \xi,\eta\in Z.
\end{equation}
see e.g.~\cite[Lemmas 6, 7]{Nica:2013} (cf.~also \cite{Sullivan:1979}). Now, if $\nu$ is the $D$-dimensional Hausdorff measure which is also Ahlfors regular, $g$ preserves the class of $\nu$  and (a power of) $\D{g}$ gives the Radon--Nikodym derivative of the action (see e.g.~\cite[Lemma 8]{Nica:2013}):
\begin{equation}\label{eq:change-of-var}
	\mathrm{d}\nu(g\xi) = \D{g}^D(\xi)\,\mathrm{d}\nu(\xi)
	\qquad\text{for all }g\in\Mob(Z)\text{ and }\xi\in Z.
\end{equation}
We further have a cocycle formula
\begin{equation}\label{eq:metric-deriv-cocycle}
	\Db{gh}(\xi) = \D{g}(h\xi)\, \D{h}(\xi)\qquad\text{for all }g,h\in\Mob(Z)\text{ and }\xi\in Z.
\end{equation}

Finally, the function spaces, and the various Sobolev inequalities, work even more generally:

\begin{assumption-w-label}{(Ahlf)}\label{ass:1}
	Let $(Z,d,\nu)$ be a bounded metric measure space, which is Ahlfors--David $D$-regular (for some $D\geq0$).
\end{assumption-w-label}

\subsection{Notation}\label{subsec:notation}
We write $A\prec B$ when $A\leq cB$ for some constant $c>0$ which depends only on the underlying space, and parameters $p$ and $t$. If we want to emphasise the dependence on a particular parameter, we may write $A\prec_t B$. Furthermore, we write $A\asymp B$ iff $A\prec B$ and $B\prec A$.

\subsection{Strong hyperbolicity}\label{subsec:strong-hyperbolicity}
Let us return to the setting of Assumption \ref{ass:3} and be more precise. Let $(X,\rho)$ be a metric space. It is \emph{roughly geodesic} if there exists $C\geq0$ such that for all $x,y\in X$ there exists a function $\gamma: [a,b]\to X$ with $\gamma(a)=x$, $\gamma(b)=y$, and for all $s,s'\in[a,b]$ one has $|s-s'|-C\leq\rho(\gamma(s),\gamma(s'))\leq |s-s'|+C$.

A metric space $(X,\rho)$ is \emph{strongly hyperbolic} with parameter $\eps>0$ if $\exp(-\eps\langle x,y\rangle_o)\leq \exp(-\eps\langle x,z\rangle_o) + \exp(-\eps\langle z,y\rangle_o)$ for all $x,y,z,o\in X$ \cite[Definition 4.1]{Nica-Spakula:2016}. Fix $o\in X$. As mentioned above, then $\langle\cdot,\cdot\rangle_o$ extends continuously to $X\cup\partial X$, and $d(\xi,\eta)=\exp(-\eps\langle\xi,\eta\rangle_o)$ defines a metric on $\partial X$ \cite[Theorem 4.2]{Nica-Spakula:2016}.

In this setting, we have a formula for $\D{g}$. First the Busemann function: for $\xi\in\partial X$, $x,o\in X$, denote $\beta(\xi,x,o) = -2\langle \xi,x\rangle_o+\rho(x,o)$ (secretly thinking it "equals $\rho(\xi,x)-\rho(\xi,o)$"). Then for $g\in\Gamma$ we have $\D{g}(\xi)=\exp(-\eps\beta(\xi,go,o)) = \exp(2\eps\langle\xi,go\rangle_o-\eps\rho(go,o))$ \cite[Section 6.2]{Nica-Spakula:2016}.

We record a fact well--known to experts, and for the sake of completeness also a short proof.
\begin{proposition}\label{prop:extend-pt-to-boundary}
	Suppose \ref{ass:3}. Then there exist $M\geq0$ such that for any $g\in\Gamma$ there exists $\xi\in\partial X$ such that $|g|-M\leq \langle \xi,go\rangle_o\leq |g|$.
\end{proposition}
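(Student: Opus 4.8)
The plan is to exhibit a boundary point $\xi$ lying "in the direction of $go$", obtained by prolonging a rough geodesic from $o$ through $go$ all the way to the boundary, and then to read off the Gromov product from the rough--geodesic estimates. Throughout I read $|g|=\rho(go,o)$, which is the length that makes the two--sided bound literally correct.

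First I would dispose of the upper bound, which in fact holds for \emph{every} $\xi$: for $x\in X$ the triangle inequality gives $\langle x,go\rangle_o=\frac12(\rho(o,x)+\rho(o,go)-\rho(x,go))\le\rho(o,go)=|g|$, and since $\langle\cdot,\cdot\rangle_o$ extends continuously to $X\cup\partial X$ (Subsection \ref{subsec:strong-hyperbolicity}), passing to the limit along any sequence $x\to\xi$ yields $\langle\xi,go\rangle_o\le|g|$. So all the content lies in producing a single $\xi$ realising the lower bound with $M$ independent of $g$.

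For the lower bound I would fix a rough geodesic $\gamma\colon[0,L]\to X$ from $o$ to $go$, so that $L\ge\rho(o,go)-C=|g|-C$, where $C$ is the rough--geodesic constant. The key step is to extend $\gamma$ to a rough geodesic ray $\gamma\colon[0,\infty)\to X$ with the same basepoint, converging to some $\xi\in\partial X$. This is where the full strength of Assumption \ref{ass:3} enters: the proper, cocompact, isometric action makes $(X,\rho)$ proper, while cocompactness together with non--elementarity forbids deep ``dead ends'', so any rough geodesic segment prolongs. A limit of rough geodesics to orbit points escaping to infinity (Arzel\`a--Ascoli, together with the local--to--global stability of rough geodesics in hyperbolic spaces) produces the ray, and rough geodesic rays converge to boundary points as in \cite{Nica-Spakula:2016}. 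Crucially, the constant $C$, the (strong) hyperbolicity data, and the cocompactness radius $R$ are all independent of $g$, which is exactly what makes the resulting $M$ uniform.

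It then remains to estimate. For a rough geodesic ray to $\xi$ one has $\langle\gamma(t),\gamma(s)\rangle_o\ge s-\tfrac{3}{2}C$ whenever $t>s$, directly from $\rho(o,\gamma(t))\ge t-C$, $\rho(o,\gamma(s))\ge s-C$, and $\rho(\gamma(t),\gamma(s))\le(t-s)+C$; letting $t\to\infty$ and using continuity of the extension gives $\langle\xi,\gamma(s)\rangle_o\ge s-\tfrac{3}{2}C$. Taking $s=L$ and $\gamma(L)=go$ (and, should the ray only pass near $go$, the $1$--Lipschitz bound $|\langle\xi,a\rangle_o-\langle\xi,b\rangle_o|\le\rho(a,b)$ to absorb the error) yields $\langle\xi,go\rangle_o\ge|g|-M$ for $M=M(C)$. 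The main obstacle is the middle step: not the existence of \emph{some} ray, but a prolongation whose endpoint keeps $go$ at Gromov product $\approx|g|$ with constant independent of $g$. This is precisely where properness and the cocompactness bound on dead--end depth are used, and where one invokes the stability of rough geodesics inside $X$. If one prefers to avoid properness, the same $\xi$ can instead be built as a limit of orbit points $h_n o\to\infty$ chosen greedily so that the broken path $o,go,h_1o,h_2o,\dots$ is a local, hence global, rough geodesic.
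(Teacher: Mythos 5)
The upper bound in your proposal is fine (and matches the paper), but the central step of your lower bound --- extending the rough geodesic $[o,go]$ to a rough geodesic ray with constants independent of $g$ --- is essentially equivalent to the statement you are trying to prove, and your justification for it is circular. Concretely: Arzel\`a--Ascoli applied to segments from $o$ to points $x_n\to\infty$ produces a ray from $o$, but that ray passes near $go$ only if the segments do, i.e.\ only if you can first produce points $x_n\to\infty$ with $\langle x_n,go\rangle_o\geq |g|-M$ uniformly in $g$ --- which is precisely the proposition, with interior points in place of $\xi$. The same circularity affects the ``greedy'' variant at the end: to choose $h_1$ so that $o,go,h_1o$ is a local rough geodesic, you must already know that from $go$ there is a far-away orbit point $h_1o$ with $\langle o,h_1o\rangle_{go}\leq M$ for uniform $M$, which is again the claim. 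The appeal to ``cocompactness together with non-elementarity forbids deep dead ends'' is an assertion, not an argument: bounded dead-end depth is a nontrivial result in its own right, and you give no proof of it. (A further side issue: properness of $X$ is not explicit in Assumption \ref{ass:3}; a metrically proper, cobounded action does not by itself make balls of $X$ compact, which Arzel\`a--Ascoli needs.)

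What is missing is a mechanism that forces \emph{some} direction at $go$ to line up with $[o,go]$, and the standard mechanism is to take \emph{two} candidate directions and let hyperbolicity rule out at most one of them. That is exactly the paper's proof: fix $\eta_1\neq\eta_2\in\partial X$ (possible by non-elementarity), verify the identity $\langle go,\omega\rangle_o+\langle g^{-1}o,g^{-1}\omega\rangle_o=|g|$, and use the hyperbolicity inequality $\min(\langle g^{-1}o,\eta_1\rangle_o,\langle g^{-1}o,\eta_2\rangle_o)\leq\langle\eta_1,\eta_2\rangle_o+\delta$ to conclude that $\xi=g\eta_1$ or $\xi=g\eta_2$ satisfies $\langle\xi,go\rangle_o\geq|g|-\langle\eta_1,\eta_2\rangle_o-\delta$; no rough geodesics, no properness, and no limiting arguments are needed. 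If you want to keep your geometric picture, the correct repair is to replace the single prolongation by the $g$-translate of a fixed bi-infinite quasi-geodesic: for a loxodromic $a\in\Gamma$, the path $n\mapsto ga^no$ passes through $go$ with constants independent of $g$, and the min-inequality applied at the basepoint $go$ forces one of the two endpoints $ga^{+\infty},ga^{-\infty}$ to satisfy the required bound --- which is the paper's argument in disguise.
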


\begin{proof}
	The second inequality (for any $\xi\in\partial X$) follows immediately from the definition of Gromov product.

	For the first inequality, fix $\eta_1,\eta_2\in \partial X$, $\eta_1\not=\eta_2$; this is possible since the action of $\Gamma$ is assumed to be non-elementary.
	A direct calculation shows that for any $g\in\Gamma, \omega\in X$, we have $\langle go, \omega\rangle_o + \langle g^{-1}o,g^{-1}\omega\rangle_o = |g|$, and by continuity of the Gromov product on $X\cup \partial X$ this holds for $\omega\in\partial X$ as well.
	Hence
	\begin{equation*}
		\max(\langle go,g\eta_1\rangle_o, \langle go,g\eta_2\rangle_o)
		= |g| - \min(\langle g^{-1}o,\eta_1\rangle_o, \langle g^{-1}o,\eta_2\rangle_o)
		\geq |g|-\langle \eta_1,\eta_2 \rangle_o - \delta,
	\end{equation*}
	where $\delta\geq0$ is the hyperbolicity constant. Hence, choosing $\xi = g\eta_i$ which realises $\max_{i=1,2}\langle go,g\eta_i\rangle_o$, the claim follows with $M=\langle \eta_1,\eta_2\rangle_o+\delta$.
\end{proof}

\subsection{Zygmund/Orlicz spaces}\label{subsec:orlicz-spaces}

We shall need Zygmund spaces $L^p\log L$ ($p\geq 1$) and $L_{\exp}$; they are also instances of more general Orlicz spaces. We recall the definitions and some facts about them. Let $(Z,\nu)$ be a measure space, with $\nu(Z)<\infty$.

For $p\geq1$, denote $\Phi_{p}(t)= |t|^p\log^{+}|t|^p$ for $t\in\mathbb{R}$, where $\log^{+}(s)=\max(0,\log_e(s))$, $\log^{+}(0)=0$. Also denote $\Psi(t)=|t|$ for $t\in(-1,1)$ and $\Psi(t)=\exp(|t|-1)$ for $|t|\geq 1$. These are examples of the so-called Young functions in the theory of Orlicz spaces. Let $\Phi$ be one of the functions defined in this paragraph.

The \emph{Orlicz space} $L_\Phi(\nu)$ is the space of (a.e.--equivalence classes of) measurable functions $f:Z\to\mathbb{C}$, such that $\int_Z \Phi(|f|/k)\,\mathrm{d}\nu<\infty$ for some $k>0$ \cite[III]{Rao-Ren:1991}. When $\Phi=\Phi_1$ above, the resulting space is also referred to as a Zygmund space $L\log L$; we shall use this notation for it. When $\Phi=\Phi_{p}$, we will use the notation $L^p\log L$, and when $\Phi=\Psi$ we write $L_{\exp}$; the measure space $(Z,\nu)$ being understood from the context.

There are several ways to define (equivalent) norms on $L_\Phi(\nu)$, we shall use the Luxemburg norm \cite[Theorem III.2.3]{Rao-Ren:1991}: for $f\in L_\Phi(\nu)$, we put
\begin{equation}\label{eq:Luxemburg-norm}
	\|f|L_\Phi\| = \inf\big\{k\geq 0 \mid \int\nolimits_{Z}\Phi(|f|/k)\,\mathrm{d}\nu \leq 1\big\}.
\end{equation}
An immediate consequence of this definition is that if $f\in L^p\log L$, then $f^p\in L\log L$ and
\begin{equation}\label{eq:f^p-in-LlogL-implies-f-in-LplogL}
	\|f^p | L\log L\| = \|f | L^p\log L\|^p.
\end{equation}
Note that if $\nu(Z)<\infty$ and $p>1$, then $L^\infty \subseteq L_{\exp} \subseteq L^p\log L \subseteq L^p \subseteq L\log L  \subseteq L^1$. For example, the first inclusion when $\nu(Z)=1$ follows by observing that $\int_Z \Psi(|f|/k)\,\mathrm{d}\nu \leq \Psi\!\left(\|f|L^\infty\|/k\right)$. Since $\Psi(1)=1$, we obtain that $\|f|L_{\exp}\|\leq \|f|L^\infty\|$.

Finally, $\Phi_1$ and $\Psi$ form a so-called (normalised) \emph{complementary Young pair} \cite[I.3 and II.4]{Rao-Ren:1991}, and hence a generalised H\"older's inequality holds \cite[Proposition III.3.1]{Rao-Ren:1991}:
\begin{proposition}\label{prop:Young-for-LlogL-expL}
	If $f\in L\log L$ and $g\in L_{\exp}$, then
	\begin{equation}\label{eq:Young-for-LlogL-expL}
		\int\nolimits_Z |fg|\,\mathrm{d}\nu \leq \|f|L\log L\|\cdot \|g|L_{\exp}\|.
	\end{equation}
\end{proposition}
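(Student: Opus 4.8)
The plan is to derive the inequality from the pointwise Young inequality for the complementary pair, but to extract the sharp constant $1$ (rather than the crude factor $2$ one gets by bounding two unit integrals separately) by optimising over a scaling parameter. First I would record that $\Psi$ is exactly the convex conjugate of $\Phi_1$: a direct Legendre-transform computation $\Psi(s)=\sup_{t\ge 0}\big(st-\Phi_1(t)\big)$ returns precisely $\Psi(s)=s$ for $0\le s\le 1$ and $\Psi(s)=e^{s-1}$ for $s\ge 1$, matching the definitions in Subsection~\ref{subsec:orlicz-spaces}. Consequently the Fenchel--Young inequality $st\le \Phi_1(t)+\Psi(s)$ holds for all $s,t\ge 0$, and this is the only analytic input needed.

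After reducing to the case $f,g\ge 0$ with $0<\|g|L_{\exp}\|<\infty$ (the degenerate cases being immediate), the heart of the argument is a parametrised estimate. For any $\lambda>0$, I would apply Young's inequality pointwise with $t=\lambda f(\xi)$ and $s=g(\xi)/\|g|L_{\exp}\|$, then integrate over $Z$:
\[
\frac{\lambda}{\|g|L_{\exp}\|}\int_Z fg\,\mathrm{d}\nu \;\le\; \int_Z \Phi_1\big(\lambda f\big)\,\mathrm{d}\nu \;+\; \int_Z \Psi\Big(\tfrac{g}{\|g|L_{\exp}\|}\Big)\,\mathrm{d}\nu \;\le\; \int_Z \Phi_1\big(\lambda f\big)\,\mathrm{d}\nu \;+\; 1,
\]
where the last step uses the defining property \eqref{eq:Luxemburg-norm} of the Luxemburg norm of $g$ at $k=\|g|L_{\exp}\|$. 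Rearranging gives $\int_Z fg\,\mathrm{d}\nu \le \|g|L_{\exp}\|\cdot \tfrac1\lambda\big(1+\int_Z\Phi_1(\lambda f)\,\mathrm{d}\nu\big)$ for \emph{every} $\lambda>0$, so I may take the infimum over $\lambda>0$ on the right. The final step is then to identify $\inf_{\lambda>0}\tfrac1\lambda\big(1+\int_Z\Phi_1(\lambda f)\,\mathrm{d}\nu\big)$ with the norm $\|f|L\log L\|$ in its Amemiya (Orlicz-norm) representation, which for this complementary pair is the representation dual to the gauge norm on $L_{\exp}$.

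The step I expect to be the main obstacle is precisely this last identification, because it is where the sharp constant lives: the naive route of evaluating both integrals at the Luxemburg radius yields only $1+1=2$, and the improvement to constant $1$ is exactly the content of the optimisation over $\lambda$ encapsulated in the Amemiya formula. I would therefore need to be careful that the functional being minimised is the correct representative of the $L\log L$ norm appearing in the statement (the Luxemburg and Amemiya norms on $L\log L$ are equivalent, and the constant-$1$ form of H\"older's inequality is governed by the complementary-pair duality between $\Phi_1$ and $\Psi$, cf.\ \cite[I.3, II.4 and Proposition III.3.1]{Rao-Ren:1991}). Once the optimised functional is matched to $\|f|L\log L\|$, the conclusion $\int_Z|fg|\,\mathrm{d}\nu\le \|f|L\log L\|\cdot\|g|L_{\exp}\|$ follows immediately, and the passage from $fg$ to $|fg|$ and the removal of the positivity reduction are routine.
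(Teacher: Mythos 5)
Your reduction and the parametrised Fenchel--Young step are correct: $\Psi$ is indeed the convex conjugate of $\Phi_1$, and integrating $st\le\Phi_1(t)+\Psi(s)$ with $t=\lambda f(\xi)$, $s=g(\xi)/\|g|L_{\exp}\|$ legitimately yields
\[
\int_Z |fg|\,\mathrm{d}\nu \;\le\; \|g|L_{\exp}\|\cdot \inf_{\lambda>0}\frac{1}{\lambda}\Big(1+\int_Z\Phi_1(\lambda |f|)\,\mathrm{d}\nu\Big).
\]
But the final identification you flag as the ``main obstacle'' does not merely require care --- it fails. The infimum on the right is the Amemiya norm, which by the Amemiya identity equals the \emph{Orlicz} norm $\sup\{\int_Z|fh|\,\mathrm{d}\nu : \int_Z\Psi(|h|)\,\mathrm{d}\nu\le 1\}$, and this norm \emph{dominates} the Luxemburg norm \eqref{eq:Luxemburg-norm}, in general strictly (up to a factor $2$); the inequality you would need, Amemiya $\le$ Luxemburg, goes the wrong way. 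Concretely, on a probability space take $f=\mathbf{1}$: then $\inf_{\lambda>0}\lambda^{-1}(1+\Phi_1(\lambda))=1$ (minimised at $\lambda=1$), while the Luxemburg norm is $\|\mathbf{1}|L\log L\|=1/u_0\approx 0.567$, where $u_0\log u_0=1$. Worse, taking also $g=\mathbf{1}$ (so $\|g|L_{\exp}\|=1$) gives $\int_Z fg\,\mathrm{d}\nu=1>\|f|L\log L\|\cdot\|g|L_{\exp}\|$, which shows that with the Luxemburg norm on \emph{both} sides --- the norms the paper defines in \eqref{eq:Luxemburg-norm} --- the constant-$1$ inequality is actually false. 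The correct constant-$1$ H\"older pairs the Orlicz norm on one factor with the Luxemburg norm on the other, and that mixed statement is exactly what your computation proves.

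What you can salvage immediately: specialising to the single value $\lambda=\|f|L\log L\|^{-1}$, for which $\int_Z\Phi_1(\lambda|f|)\,\mathrm{d}\nu\le 1$ by definition of the Luxemburg norm (via monotone convergence, as in your treatment of the $g$--factor), gives $\int_Z|fg|\,\mathrm{d}\nu\le 2\,\|f|L\log L\|\cdot\|g|L_{\exp}\|$ --- a complete, elementary proof with constant $2$ in place of $1$, which is all that is needed downstream, since the proposition enters only through $\prec$--estimates (Proposition \ref{prop:basic-calculation} and its sequels). For comparison, the paper offers no computation at all: it observes that $(\Phi_1,\Psi)$ is a complementary Young pair and quotes the generalised H\"older inequality from \cite[Proposition III.3.1]{Rao-Ren:1991}, where the sharp-constant form is likewise of mixed Orlicz/gauge type; so the discrepancy you ran into sits in the statement's choice of norms, not in your method. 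Your Fenchel--Young/Amemiya route is in fact the more informative one, as it makes visible exactly where the sharp constant lives and which norm pairing it requires.
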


\section{Basic calculation}

In this section we go through a calculation which shows how to reduce bounding the representation $\pi^{(p)}_{it}$ to having an appropriate Sobolev inequality and bounding certain multipliers. (Somewhat similar calculations has been used before; the earliest place the authors are aware of is \cite{Mantero-Zappa:1983jfa}.)

\begin{proposition}\label{prop:basic-calculation}
	In the setting of Assumption \ref{ass:2}, let $p\geq1$, $t\in\mathbb{R}$, $\phi \in W^{\log,p}(Z)$ and $g\in \Mob(Z)$. Then
	\begin{equation}\label{eq:basic-calculation}
		\mathcal{E}_{\log,p}\big(\pi^{(p)}_{it}(g)\phi\big) \leq C_1\cdot\mathcal{E}_{\log,p}(\phi) + C_2\cdot\left\| \Theta_{g,t} | L_{\exp}\right\|\cdot\left\|\phi|L^p\log L\right\|^p,
	\end{equation}
	where
	\begin{equation}\label{eq:multipliers-defn}
		\Theta_{g,t}(\omega) = \int_{Z}\frac{\left|\frac{\D{g}^{D/(2p)+it}(\zeta)}{\D{g}^{D/(2p)+it}(\omega)}-1\right|^p}{d^D(\omega,\zeta)}\, \mathrm{d}\nu(\zeta).
	\end{equation}
\end{proposition}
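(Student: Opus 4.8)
The plan is to push the group action onto the variables by a change of variables, reduce the resulting expression to an elementary estimate for complex numbers, and close with the generalised H\"older inequality of Proposition~\ref{prop:Young-for-LlogL-expL}.

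First I would write $\psi = \pi^{(p)}_{it}(g)\phi$ and substitute $\xi = g\omega$, $\eta = g\zeta$ in the integral \eqref{eq:defn-E_p,log} defining $\mathcal{E}_{\log,p}(\psi)$. The cocycle formula \eqref{eq:metric-deriv-cocycle} applied to $g^{-1}g = \id$ gives $\Db{g^{-1}}(g\omega) = \D{g}^{-1}(\omega)$, the change of variables \eqref{eq:change-of-var} handles the two measure factors, and the geometric mean value property \eqref{eq:GMV} handles $d^D(g\omega,g\zeta)$. After simplification the surviving Jacobian weight is $\D{g}^{D/2}(\omega)\D{g}^{D/2}(\zeta)$, which I absorb into the $p$-th power as $\big(\D{g}^{D/(2p)}(\omega)\D{g}^{D/(2p)}(\zeta)\big)^p$. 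Writing $r = r(\omega,\zeta) = \big(\D{g}(\zeta)/\D{g}(\omega)\big)^{D/(2p)}$ and $\tilde\phi(\omega) = \D{g}^{it}(\omega)\phi(\omega)$ (so $|\tilde\phi(\omega)| = |\phi(\omega)|$), the integrand becomes
\[
	K(\omega,\zeta) = \frac{\big|\,r\,\tilde\phi(\omega) - r^{-1}\tilde\phi(\zeta)\,\big|^p}{d^D(\omega,\zeta)}.
\]
Since $K$ is symmetric in $\omega,\zeta$ (the swap sends $r\mapsto r^{-1}$), it suffices to estimate twice the integral over the region $A = \{\D{g}(\zeta)\le \D{g}(\omega)\}$, where $r\le 1$.

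On $A$ I would use the algebraic decomposition
\[
	r\,\tilde\phi(\omega) - r^{-1}\tilde\phi(\zeta) = r\big(\tilde\phi(\omega)-\tilde\phi(\zeta)\big) + (r-r^{-1})\tilde\phi(\zeta),
\]
together with $\tilde\phi(\omega)-\tilde\phi(\zeta) = \D{g}^{it}(\omega)\big(\phi(\omega)-\phi(\zeta)\big) + \big(\D{g}^{it}(\omega)-\D{g}^{it}(\zeta)\big)\phi(\zeta)$. Because $r\le 1$ and $\D{g}^{it}$ is unimodular, the triangle inequality and $(a+b)^p\le 2^{p-1}(a^p+b^p)$ bound $|r\tilde\phi(\omega)-r^{-1}\tilde\phi(\zeta)|^p$ by a $p$-dependent constant times $|\phi(\omega)-\phi(\zeta)|^p + \big(|e^{-i\theta}-1|^p + (r^{-1}-r)^p\big)|\phi(\zeta)|^p$, where $re^{i\theta} = \D{g}^{D/(2p)+it}(\zeta)/\D{g}^{D/(2p)+it}(\omega)$ is exactly the ratio in \eqref{eq:multipliers-defn} and $|e^{-i\theta}-1| = |\D{g}^{it}(\omega)-\D{g}^{it}(\zeta)|$. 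The first summand integrates to at most $C_1\,\mathcal{E}_{\log,p}(\phi)$.

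For the two error coefficients I would record two elementary facts valid for $r\le 1$: first, $|z-1|\le|Rz-1|$ whenever $|z|=1$ and $R\ge 1$ (a one-line check on $|Rz-1|^2-|z-1|^2 = (R-1)(R+1-2\re z)\ge0$), giving $|e^{-i\theta}-1|\le |r^{-1}e^{-i\theta}-1|$; second, $r^{-1}-r = (1+r)(r^{-1}-1)\le 2(r^{-1}-1)\le 2|r^{-1}e^{-i\theta}-1|$. Thus both error coefficients are $\prec |r^{-1}e^{-i\theta}-1|^p$. Relabelling $\omega\leftrightarrow\zeta$ identifies $\iint_{Z^2} d^{-D}(\omega,\zeta)\,|r^{-1}e^{-i\theta}-1|^p\,|\phi(\zeta)|^p$ with $\int_Z \Theta_{g,t}(\omega)|\phi(\omega)|^p\,d\nu(\omega)$, so the error integral is at most $C_2\int_Z\Theta_{g,t}|\phi|^p$. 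I finish with the generalised H\"older inequality \eqref{eq:Young-for-LlogL-expL} (with $\Theta_{g,t}\in L_{\exp}$ and $|\phi|^p\in L\log L$) and the identity \eqref{eq:f^p-in-LlogL-implies-f-in-LplogL}, which turn this into $C_2\|\Theta_{g,t}|L_{\exp}\|\,\|\phi|L^p\log L\|^p$; summing the two symmetric regions yields \eqref{eq:basic-calculation}.

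The main obstacle is that the Jacobian weight $\D{g}^{D/2}(\omega)\D{g}^{D/2}(\zeta)$ is unbounded, so any direct splitting of $\psi(g\omega)-\psi(g\zeta)$ leaves an unbounded coefficient in front of $\mathcal{E}_{\log,p}(\phi)$. The resolution---and the only real idea---is to symmetrise to the region $r\le 1$, where the coefficient of the genuine difference $\phi(\omega)-\phi(\zeta)$ becomes bounded, and then to notice that the remaining large coefficients $(r^{-1}-r)$ and $|e^{-i\theta}-1|$ are controlled exactly by $|r^{-1}e^{-i\theta}-1|$, i.e.\ by the symmetric form of the multiplier $\Theta_{g,t}$. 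Checking these comparisons carefully, and keeping all constants independent of $g$, is where the work lies.
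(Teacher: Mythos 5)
Your proof is correct, and its skeleton coincides with the paper's: change of variables via \eqref{eq:change-of-var}, \eqref{eq:metric-deriv-cocycle} and \eqref{eq:GMV}, absorption of the Jacobian $\D{g}^{D/2}(\omega)\D{g}^{D/2}(\zeta)$ into the $p$-th power, a three-term splitting, then the generalised H\"older inequality \eqref{eq:Young-for-LlogL-expL} together with \eqref{eq:f^p-in-LlogL-implies-f-in-LplogL}. The genuine difference is the splitting itself. The paper writes the transformed integrand as $\bigl|A\phi(\omega)-A^{-1}\phi(\zeta)\bigr|^p$ with $A=\D{g}^{D/(2p)+it}(\zeta)/\D{g}^{D/(2p)+it}(\omega)$ and decomposes $A\phi(\omega)-A^{-1}\phi(\zeta)=\bigl(\phi(\omega)-\phi(\zeta)\bigr)+(A-1)\phi(\omega)+\bigl(1-A^{-1}\bigr)\phi(\zeta)$; the coefficient of the genuine difference is then exactly $1$ on all of $Z^2$, both error coefficients are already of the form ``ratio minus one'', and the swap $\omega\leftrightarrow\zeta$ identifies both error integrals with $\int_Z\Theta_{g,t}|\phi|^p\,\mathrm{d}\nu$. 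So the obstacle you name at the end --- that any direct splitting leaves an unbounded coefficient in front of $\mathcal{E}_{\log,p}(\phi)$ --- does not in fact arise, and your symmetrisation to the region $\{r\le 1\}$ plus the two elementary comparisons ($|z-1|\le|Rz-1|$ for $|z|=1$, $R\ge 1$, and $r^{-1}-r\le 2|r^{-1}e^{-i\theta}-1|$) are extra work that the paper's decomposition renders unnecessary. What your route buys in exchange is tighter bookkeeping of the unimodular factors: separating the modulus $r$ from the phase $e^{i\theta}$ lands you exactly on $\Theta_{g,t}$ as defined in \eqref{eq:multipliers-defn}, whereas the paper's passage to ratio form is an identity only after replacing $t$ by $-t/2$ in the exponent --- a harmless slip, since the estimates on the multipliers in Section \ref{sec:geometric-control} hold for each fixed $t$, but one your argument avoids.
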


To proceed from this result towards a norm bound for the representation, the $L^p\log L$ term will be controlled by $\|\phi|W^{\log,p}\|^p$ using the logarithmic Sobolev inequality (Corollary \ref{cor:Wsp-in-LlogL}). We refer to the control on the $L_{\exp}$ term as "geometric control", as it requires knowing something about the behaviour of metric derivatives $\D{g}$.
In the setting of Assumption \ref{ass:3}, we obtain geometric control which is linear in $\rho(go,o)$ in Section \ref{sec:geometric-control}, Corollary \ref{coro:bound-on-multipliers-2}.

\begin{remark}
	For comparison, a very much analogous calculation can be performed to show that $\pi_{s+it}^{(p)}$ is uniformly bounded on fractional Sobolev spaces $W^{s,p}(Z)$ (for $sp<D$, $0<s<1$), cf.~\cite[Section 7]{Boucher-Spakula:2023}. In that context, for the final steps one uses a dual pair of Lorentz spaces $L(\frac{D}{sp},\infty)$ and $L(\frac{D}{D-sp},1)$, fractional Sobolev inequality, and can establish a uniform geometric control.
\end{remark}

\begin{proof}
	We calculate:
\begin{align*}
	\mathcal{E}_{\log,p}\big(\pi^{(p)}_{it}(g)\phi\big)
	&=\iint_{Z^2}\frac{|\pi^{(p)}_{it}(g)\phi(\xi)-\pi^{(p)}_{it}(g)\phi(\eta)|^p}{d^D(\xi,\eta)} \,\mathrm{d}\nu(\xi) \mathrm{d}\nu(\eta)\\
	&=\iint_{Z^2}\frac{\big|\Db{g^{-1}}^{\frac{D}{p}-it}(\xi)\phi(g^{-1}\xi)-\Db{g^{-1}}^{\frac{D}{p}-it}(\eta)\phi(g^{-1}\eta)\big|^p}{d^D(\xi,\eta)} \,\mathrm{d}\nu(\xi) \mathrm{d}\nu(\eta)\\
	&=\iint_{Z^2}\frac{\big|\D{g}^{-\frac{D}{p}+it}(\omega)\phi(\omega)-\D{g}^{-\frac{D}{p}+it}(\zeta)\phi(\zeta)\big|^p}{d^D(g\omega,g\zeta)}\D{g}^{D}(\omega)\D{g}^{D}(\zeta)\mathrm{d}\nu(\omega) \mathrm{d}\nu(\zeta)\\
	&=\iint_{Z^2}\frac{\big|\D{g}^{-\frac{D}{p}+it}(\omega)\phi(\omega)-\D{g}^{-\frac{D}{p}+it}(\zeta)\phi(\zeta)\big|^p}{d^D(\omega,\zeta)}\D{g}^{\frac{D}{2}}(\omega)\D{g}^{\frac{D}{2}}(\zeta)\mathrm{d}\nu(\omega) \mathrm{d}\nu(\zeta)\\
	&=\iint_{Z^2}\frac{\left|\frac{\D{g}^{D/(2p)+it}(\zeta)}{\D{g}^{D/(2p)+it}(\omega)}\phi(\omega)-\frac{\D{g}^{D/(2p)+it}(\omega)}{\D{g}^{D/(2p)+it}(\zeta)}\phi(\zeta)\right|^p}{d^D(\omega,\zeta)}\,\mathrm{d}\nu(\omega) \mathrm{d}\nu(\zeta).\\
	\intertext{Using that $|x+y+z|^p\leq 3^{p-1}(|x|^p+|y|^p+|z|^p)$, we continue:}
	&\leq 3^{p-1}\cdot \mathcal{E}_{\log,p}(\phi)
	+2\cdot 3^{p-1}\int_{Z}\int_{Z}\frac{\left|\frac{\D{g}^{D/(2p)+it}(\zeta)}{\D{g}^{D/(2p)+it}(\omega)}-1\right|^p}{d^D(\omega,\zeta)} \mathrm{d}\nu(\zeta)\cdot |\phi(\omega)|^p \mathrm{d}\nu(\omega).
\end{align*}
For the remaining integral, we use the notation $\Theta_{g,t}$ from \eqref{eq:multipliers-defn}, and use H\"older's inequality \eqref{eq:Young-for-LlogL-expL}:
\begin{align*}
	\int_{Z}\int_{Z}\frac{\left|\frac{\D{g}^{D/(2p)+it}(\zeta)}{\D{g}^{D/(2p)+it}(\omega)}-1\right|^p}{d^D(\omega,\zeta)} & \mathrm{d}\nu(\zeta) \cdot |\phi(\omega)|^p \mathrm{d}\nu(\omega)
	= \int_{Z}\Theta_{g,t}(\omega) |\phi(\omega)|^p \mathrm{d}\nu(\omega)
	\\
	&\leq \left\| \Theta_{g,t} | L_{\exp}\right\|\cdot\left\||\phi|^p|L\log L\right\|\\
	&= \left\| \Theta_{g,t} | L_{\exp}\right\|\cdot\left\|\phi|L^p\log L\right\|^p.
\end{align*}
This finishes the proof.
\end{proof}

\section{Logarithmic Sobolev inequality}\label{sec:log-sobolev-ineq}

The results in this Section hold under Assumption \ref{ass:1}, i.e. in bounded Ahlfors--David regular metric measure spaces.

\begin{theorem}\label{thm:log-sobolev-ineq}
	Let $(Z,d,\nu)$ be a bounded Ahlfors--David $D$-regular metric measure space, and let $p\geq 1$. Then for any measurable function $f:Z\to\mathbb{C}$ we have
	\begin{equation}\label{eq:log-sobolev-ineq}
		\int |f|^p\log^+\frac{|f|^p}{\|f|L^p\|^p} \mathrm{d}\nu \prec \left\|f | L^p\right\|^p + \iint_{Z^2} \frac{\left|f(\xi)-f(\eta)\right|^p}{d^{D}(\xi,\eta)}\, \mathrm{d}\nu(\xi)\mathrm{d}\nu(\eta).
	\end{equation}
\end{theorem}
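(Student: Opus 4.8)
The plan is to reduce the statement, by homogeneity, to functions normalised so that $\|f|L^p\|=1$, and then to exploit the self-improving character of the Dirichlet energy on an Ahlfors--David regular space. After normalisation the left-hand side becomes $\int |f|^p\log^{+}|f|^p\,\mathrm{d}\nu$, and the target inequality reads $\int |f|^p\log^{+}|f|^p\,\mathrm{d}\nu \prec 1 + \mathcal{E}_{\log,p}(f)$. The key point is that the energy $\mathcal{E}_{\log,p}$ with the borderline kernel $d^{-D}$ controls a rearrangement/distribution-function estimate that is just logarithmically stronger than the $L^p$ norm, which is exactly the gain needed to feed the $\log^{+}$ factor.

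First I would set $g = |f|^p$ and aim to bound $\int g\log^{+}g\,\mathrm{d}\nu$. The core estimate I would establish is a weak-type control on the level sets: using the Ahlfors regularity $\nu(B(\xi,r))\asymp r^D$, one covers $Z$ (of diameter $1$) by dyadic annuli and shows that the borderline energy forces the oscillation of $f$ across scales to be summable in a way that bounds $\nu(\{|f|>\lambda\})$ superlinearly in $1/\lambda^p$. Concretely, I would split the double integral $\mathcal{E}_{\log,p}(f)$ over dyadic shells $2^{-(k+1)}\leq d(\xi,\eta)< 2^{-k}$, where the kernel is $\asymp 2^{kD}$ and the shell has $\nu\times\nu$-mass $\asymp 2^{-kD}$ around each point, so each scale contributes a comparable amount; summing the geometric series over all $k\geq 0$ produces precisely one logarithmic factor, matching $\log^{+}$ rather than a power. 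This is the mechanism by which the endpoint $s=0$ kernel yields $L^p\log L$ control instead of the $L^{q}$ (Sobolev) or Lorentz control one gets for $0<s<1$.

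The technical heart is to convert this scale-by-scale energy bound into the pointwise statement $\int g\log^{+}g \prec 1+\mathcal{E}_{\log,p}(f)$. I would do this via the standard layer-cake identity $\int g\log^{+}g\,\mathrm{d}\nu = \int_{1}^{\infty}\frac{1}{\lambda}\,\nu(\{g>\lambda\})\,\mathrm{d}\lambda + \int_1^\infty \nu(\{g>\lambda\})\log\lambda\,\mathrm{d}(\cdot)$ type decomposition, estimating $\nu(\{|f|>\lambda\})$ by comparing $f$ to its average on balls and using a Poincar\'e-type inequality derived from the shell decomposition above. Equivalently, one can run a Moser-type iteration or a truncation argument: apply the energy bound to the truncations $f_N = \min(|f|,N)$ and sum over dyadic $N$, each step losing only a constant while the $\log$ emerges from the number of dyadic levels.

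I expect the main obstacle to be the endpoint nature of the kernel: because $d^{-D}$ sits exactly at the borderline where $\mathcal{E}_{\log,p}$ fails to embed into any $L^q$ with $q>p$, the usual Sobolev/Gagliardo--Nirenberg machinery is unavailable, and one must extract the logarithmic gain by hand from the summation over scales, being careful that the boundedness of $Z$ (so only scales $r\leq \diam Z=1$ occur) is what makes the relevant series converge and caps the $\log^{+}$ at the normalising level $\|f|L^p\|^p$. Keeping the constants uniform (depending only on $D$, the Ahlfors regularity constants, and $p$) through the truncation/summation is the delicate bookkeeping, but no single step should be conceptually hard once the dyadic-shell estimate is in place.
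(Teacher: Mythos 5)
Your opening reductions (normalising $\|f|L^p\|=1$, passing to $|f|$) and your heuristic for where the logarithm comes from --- the borderline kernel integrated outside a ball of radius $r$ contributes a bounded amount per dyadic shell, hence $\asymp\log(1/r)$ in total, which is exactly the paper's Lemma \ref{lem:sobolev-for-a-ball} --- agree with the paper. But the technical heart of your argument is missing, and the specific route you propose cannot close. A pointwise weak-type estimate with a single logarithmic gain, $\nu(\{|f|^p>\lambda\})\prec (1+\mathcal{E}_{\log,p}(f))/(\lambda\log\lambda)$, is the most the energy can give level-by-level, and it is sharp: a spike of height $L$ on a ball of measure $L^{-p}$ has energy $\asymp\log (L^p)$ and realises this bound with equality. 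Such a bound does not integrate through the layer-cake identity: $\int_e^\infty \frac{1+\log s}{s\log s}\,\mathrm{d}s$ diverges, and even a $(\log s)^2$ gain still diverges (log-log). So ``weak-type control plus layer cake'' cannot produce the $L\log L$ conclusion; one needs an estimate summed over \emph{all} dyadic levels simultaneously, in which the level-set measures interact. Your fallback suggestions do not fill this hole either: Ahlfors regularity alone provides no chaining or connectivity (boundaries of hyperbolic groups are typically totally disconnected), so there is no local Poincar\'e machinery to run, and the borderline nonlocal Poincar\'e inequality on a ball carries no gain in the radius (unlike the fractional case, where one gets a factor $r^{sp}$), so Moser-type iteration has nothing to iterate on.

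What actually makes the proof work --- and what is absent from your sketch --- is the lemma that for an \emph{arbitrary} measurable set $E$ and every $\xi\in Z$ one has $\int_{Z\setminus E} d^{-D}(\xi,\eta)\,\mathrm{d}\nu(\eta)\geq -c\log\nu(E)-C$ (the paper's Lemma \ref{lem:hitchhikers-lemma-6.1}, proved by showing the worst case is $E$ a ball centred at $\xi$). Combined with Markov's inequality, $a_k:=\nu(\{|f|>2^k\})\leq 2^{-pk+1}$ after normalisation, this converts $\log(a_{k-1}^{-1})$ into a quantity $\succ pk\asymp\log|f|^p$ on the level set $\{2^k<|f|\leq 2^{k+1}\}$; the energy between this level set and $\{|f|\leq 2^{k-1}\}$ then carries that logarithmic weight, and a careful summation over $k$, handling the cross terms as in \cite[Lemma 6.3]{Sobolev-Hitchhikers}, gives the claim. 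Your truncation idea ($f_N=\min(|f|,N)$, summing over dyadic $N$) is the one germ in your proposal that can be made rigorous --- it is essentially the paper's level-set decomposition --- but only in combination with the complement-integral lemma above, which is precisely the step your proposal never supplies.
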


By the definition of Luxemburg norm on the space $L^p\log L$ (and using $\|f|L^p\|\leq\|f|W^{\log,p}\|$), we immediately obtain:

\begin{corollary}[The logarithmic Sobolev inequality]\label{cor:Wsp-in-LlogL}
	Under the assumptions of Theorem \ref{thm:log-sobolev-ineq}, we have
	\begin{equation*}
		\|f|L^p\log L\| \prec \|f|W^{\log,p}\|.
	\end{equation*}
\end{corollary}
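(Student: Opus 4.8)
The final statement, Corollary \ref{cor:Wsp-in-LlogL}, is a formal consequence of Theorem \ref{thm:log-sobolev-ineq}: to bound the Luxemburg norm $\|f|L^p\log L\|$, whose defining integrand is $\Phi_p(|f|)=|f|^p\log^+|f|^p$ (with no normalisation inside the logarithm), I would apply the Theorem to $h=f/(C\|f|W^{\log,p}\|)$ and use the submultiplicativity $\log^+(ab)\le\log^+a+\log^+b$ to compare $\log^+|h|^p$ with the normalised quantity $\log^+\tfrac{|h|^p}{\|h|L^p\|^p}$ controlled by the Theorem. The discrepancy is the term $\|h|L^p\|^p\log^+\|h|L^p\|^p$, which vanishes precisely because $\|h|L^p\|\le\|h|W^{\log,p}\|\le 1$ (the parenthetical remark in the paper); choosing $C$ to depend only on the Theorem's constant then gives $\int\Phi_p(|h|)\,\mathrm{d}\nu\le1$, i.e.~$\|f|L^p\log L\|\prec\|f|W^{\log,p}\|$. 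All the real content is in Theorem \ref{thm:log-sobolev-ineq}, which I now address. By homogeneity of both sides under $f\mapsto\lambda f$ (the log-ratio is scale invariant) I normalise $\|f|L^p\|=1$, and since the left-hand side depends only on $|f|$ while $\mathcal{E}_{\log,p}(|f|)\le\mathcal{E}_{\log,p}(f)$ by the reverse triangle inequality, I also assume $f\ge0$. Writing $a_j:=\nu(\{f>2^j\})$, a crude dyadic estimate ($f^p\le 2^{(j+1)p}$ and $\log^+f^p\le(j+1)p\log2$ on $\{2^j<f\le 2^{j+1}\}$) gives $\int f^p\log^+f^p\,\mathrm{d}\nu\prec\sum_{j\ge0}(j+1)2^{jp}a_j$, so the goal reduces to bounding this sum by $1+\mathcal{E}_{\log,p}(f)$.

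The logarithmic gain comes from a truncation argument. For $j\ge0$ set $f_j=\min\{(f-2^j)_+,\,2^j\}$, so that $f_j=2^j$ on $\{f>2^{j+1}\}$ and $f_j=0$ off $\{f>2^j\}$. Each $f_j=g_j\circ f$ with $g_j$ nondecreasing and $1$-Lipschitz, and $\sum_j g_j'\le1$ a.e., whence $\sum_j|f_j(\xi)-f_j(\eta)|\le|f(\xi)-f(\eta)|$; since the summands are nonnegative and $p\ge1$, also $\sum_j|f_j(\xi)-f_j(\eta)|^p\le|f(\xi)-f(\eta)|^p$. Integrating against $d^{-D}(\xi,\eta)$ yields the key superadditivity $\sum_j\mathcal{E}_{\log,p}(f_j)\le\mathcal{E}_{\log,p}(f)$.

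The hard part is the matching per-level lower bound, a nonlocal isoperimetric-type estimate:
\[
2^{jp}\,a_{j+1}\,\log\tfrac{1}{a_j}\ \prec\ 2^{jp}\iint_{\{f>2^{j+1}\}\times(Z\setminus\{f>2^{j}\})}\frac{\mathrm{d}\nu(\xi)\,\mathrm{d}\nu(\eta)}{d^{D}(\xi,\eta)}\ \le\ \mathcal{E}_{\log,p}(f_j).
\]
This is where Ahlfors--David regularity is essential and where I expect the work to lie. Fixing $\xi\in\{f>2^{j+1}\}$, I would write $\int_{Z\setminus\{f>2^j\}}d^{-D}(\xi,\eta)\,\mathrm{d}\nu(\eta)$ via the layer-cake identity involving $D\int_0^{\diam}t^{-D-1}\nu\big((Z\setminus\{f>2^j\})\cap B(\xi,t)\big)\,\mathrm{d}t$, and bound the inner measure below by $\nu(B(\xi,t))-a_j\asymp t^D$, which holds once $t\succ a_j^{1/D}$ by $D$-regularity. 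The resulting $t^{-1}\mathrm{d}t$ integral over $[c\,a_j^{1/D},\diam]$ produces exactly $\log(1/a_j)$, uniformly in $\xi$, and integrating in $\xi$ gives the claim. The point worth stressing is that this requires \emph{no genuine isoperimetry} (no identification of extremal sets), only the one-sided density bound from regularity, and it holds for every superlevel set provided $a_j$ lies below a fixed threshold; the finitely many small-$j$ terms are harmless since each is $\le(j+1)2^{jp}a_j\le j+1$, a constant.

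Finally I would assemble. By Chebyshev, $a_j=\nu(\{f>2^j\})\le 2^{-jp}$, so $\log(1/a_j)\succ j$; combining the displayed lower bound with the superadditivity gives $\sum_{j}j\,2^{jp}a_{j+1}\prec\mathcal{E}_{\log,p}(f)$. An index shift ($a_{j+1}$ at level $j$ becomes $a_i$ at level $i=j+1$, picking up $2^{-p}$) together with $(i+1)\le 3(i-1)$ for $i\ge2$ turns this into $\sum_{i}(i+1)2^{ip}a_i\prec 1+\mathcal{E}_{\log,p}(f)$, the constant absorbing the bounded small-$i$ terms via $\|f|L^p\|=1$. With the first paragraph's upper bound for the left-hand side, this is the asserted inequality.
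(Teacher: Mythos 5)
Your proposal is correct, but the bulk of it---the proof of Theorem \ref{thm:log-sobolev-ineq}, which carries all the content---follows a genuinely different route from the paper's. Your deduction of the Corollary itself (rescaling $f$, submultiplicativity of $\log^+$, and $\|h|L^p\|\le\|h|W^{\log,p}\|\le 1$ killing the discrepancy term) is exactly what the paper means by ``immediately obtain'', so there the approaches coincide. For the Theorem, the paper runs the scheme of \cite[Lemma 6.3]{Sobolev-Hitchhikers}: after the same normalisations (plus a reduction to $f\ge 1$ via $\max(f,1)$), it estimates interactions over dyadic shells $D_i\times D_j$, $j\le i-2$, and then performs delicate bookkeeping with the auxiliary sum $S=\sum_i 2^{pi}\log(a_{i-1}^{-1})d_i$, the splitting $d_i=a_i-\sum_{\ell\ge i+1}d_\ell$, and an absorption step moving the energy term from the right-hand side to the left. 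You instead use a Maz'ya-type truncation argument: the layers $f_j=\min\{(f-2^j)_+,2^j\}$, the superadditivity $\sum_j\mathcal{E}_{\log,p}(f_j)\le\mathcal{E}_{\log,p}(f)$ (your disjoint-derivative argument is correct, including the $\ell^1$-to-$\ell^p$ step for $p\ge1$), and the per-level bound $2^{jp}a_{j+1}\log(1/a_j)\prec\mathcal{E}_{\log,p}(f_j)$, which decouples the levels and removes the cross-term and absorption bookkeeping entirely. Both arguments rest on the same geometric input---the bound $\int_{Z\setminus E}d^{-D}(\xi,\cdot)\,\mathrm{d}\nu\succ\log(1/\nu(E))$ from Ahlfors regularity, which is Lemma \ref{lem:hitchhikers-lemma-6.1} in the paper and which you re-derive inline via layer cake (your variant, lower-bounding $\nu\bigl((Z\setminus E)\cap B(\xi,t)\bigr)$ by $\nu(B(\xi,t))-\nu(E)$, is if anything slicker than the paper's comparison of $E$ with a ball of equal measure)---and both finish with Chebyshev to convert $\log(1/a_j)$ into $j$; your treatment of the finitely many levels where $a_j$ exceeds the regularity threshold is also sound. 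What each approach buys: yours is more modular and self-contained; the paper's stays parallel to the fractional ($s>0$) Sobolev argument of \cite{Sobolev-Hitchhikers}, which makes the comparison between the two cases, and the reuse of that scheme, transparent.
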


To prove Theorem \ref{thm:log-sobolev-ineq}, we start with an analogue of \cite[Lemma 6.1]{Sobolev-Hitchhikers}.
\begin{lemma}\label{lem:hitchhikers-lemma-6.1}
	Let $(Z,d,\nu)$ be a bounded Ahlfors--David $D$-regular metric measure space. Then for any $E\subset Z$ and any $\xi\in Z$, we have
	\[
	\int_{Z\setminus E} \frac{\mathrm{d}\nu(\eta)}{d^{D}(\xi,\eta)}
	\geq
	-c\log(\nu(E)) + cD\log(\diam(Z)) + c\log(c),
	\]
	where $0<c\leq1$ is the lower regularity constant, i.e.~$cr^D\leq \nu(B(\cdot,r))$ for all $0\leq r\leq\diam(Z)$.
\end{lemma}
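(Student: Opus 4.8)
The plan is to exploit that, for a fixed value of $\nu(E)$, the integral is smallest when the removed set is packed as close to $\xi$ as possible, and to turn this heuristic into a computation that uses \emph{only} the lower Ahlfors regularity. Write $R=\diam(Z)$ and set $r=(\nu(E)/c)^{1/D}$; note that by the definition of $r$ the claimed right-hand side equals $cD\log(R/r)$. If $\nu(E)>cR^D$ then $r>R$, so this quantity is negative while the left-hand side is manifestly nonnegative, and the inequality is trivial; hence I may assume $0<\nu(E)\le cR^D$, i.e.\ $r\in(0,R]$.

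The main device is the elementary radial identity $d^{-D}(\xi,\eta)=R^{-D}+D\int_{d(\xi,\eta)}^R s^{-D-1}\,\mathrm{d}s$, valid because every $\eta\in Z$ satisfies $d(\xi,\eta)\le R$. Integrating over $Z\setminus E$ and applying Tonelli to the resulting double integral, I would split the left-hand side as
\[
\int_{Z\setminus E}\frac{\mathrm{d}\nu(\eta)}{d^D(\xi,\eta)} = \frac{\nu(Z\setminus E)}{R^D} + D\int_0^R s^{-D-1}\,\nu\big((Z\setminus E)\cap\{d(\xi,\cdot)<s\}\big)\,\mathrm{d}s .
\]
For the first term I use that $Z\subseteq B(\xi,R)$ (since $R=\diam Z$), so lower regularity gives $\nu(Z)\ge cR^D$ and hence $\nu(Z\setminus E)\ge cR^D-\nu(E)$. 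For the integrand I combine $\nu(A\setminus E)\ge\nu(A)-\nu(E)$ with $\nu(\{d(\xi,\cdot)<s\})\ge cs^D$ to get $\nu\big((Z\setminus E)\cap\{d(\xi,\cdot)<s\}\big)\ge cs^D-\nu(E)$; since the integrand is nonnegative on $[0,r]$ and $cs^D\ge\nu(E)$ exactly on $[r,R]$, I can discard $[0,r]$ and keep this bound on $[r,R]$.

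Carrying out the two remaining elementary integrals $\int_r^R cs^{-1}\,\mathrm{d}s=c\log(R/r)$ and $\int_r^R \nu(E)s^{-D-1}\,\mathrm{d}s$, and recalling $\nu(E)=cr^D$, the non-logarithmic contributions cancel exactly and leave precisely $cD\log(R/r)$, which is the asserted bound. The one point that needs care — and the step I expect to be the real content rather than the routine integration — is that the `whole-space' term $\nu(Z\setminus E)/R^D$ must be \emph{retained}: discarding it, as a naive estimate would, loses an additive $c$ and so misses the stated sharp constant. It is exactly this term, bounded again by lower regularity but now at the top scale $R$, that cancels the leftover $-\nu(E)(r^{-D}-R^{-D})$ produced by the integral. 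Throughout, only the lower regularity constant $c$ enters, as the statement demands.
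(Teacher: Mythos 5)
Your proof is correct and arrives at the sharp constant by a genuinely different route from the paper's. The paper first \emph{symmetrizes}: setting $\alpha=c^{-1/D}$, $r=\nu(E)^{1/D}$ and $B=B(\xi,\alpha r)$, so that $\nu(B)\geq\nu(E)$, it uses the swapping inequality $\nu((Z\setminus E)\cap B)\geq \nu(E\cap(Z\setminus B))$ to show $\int_{Z\setminus E}d^{-D}(\xi,\eta)\,\mathrm{d}\nu(\eta)\geq \int_{Z\setminus B}d^{-D}(\xi,\eta)\,\mathrm{d}\nu(\eta)$ --- i.e.\ the worst case is a ball centred at $\xi$ of at least the same mass --- and then invokes the separate ball-complement estimate of Lemma \ref{lem:sobolev-for-a-ball}. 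You instead run the layer-cake expansion directly on $Z\setminus E$, bound the mass scale-by-scale via $\nu\big((Z\setminus E)\cap\{d(\xi,\cdot)<s\}\big)\geq cs^D-\nu(E)$ on $[r,R]$ with your $r=(\nu(E)/c)^{1/D}$ (which equals the paper's $\alpha r$), and keep the top-scale term $\nu(Z\setminus E)/R^D\geq c-\nu(E)R^{-D}$ so that the non-logarithmic contributions cancel exactly: $c-\nu(E)R^{-D}-\nu(E)\big(r^{-D}-R^{-D}\big)=c-\nu(E)r^{-D}=0$, leaving precisely $cD\log(R/r)$, which is the stated right-hand side. Your observation that discarding the top-scale term loses the sharp constant is accurate; in the paper's argument this subtlety never arises because the swapping step absorbs it. What the paper's route buys is modularity (Lemma \ref{lem:sobolev-for-a-ball} is reused elsewhere, e.g.\ in Propositions \ref{prop:bound-on-multipliers-1} and \ref{prop:compactness-of-kernel-ops}) and a clean conceptual statement; what yours buys is a self-contained one-shot computation needing neither the swapping inequality nor the auxiliary lemma. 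Two harmless loose ends, which the paper's own proof also glosses over: the case $\nu(E)=0$ (where the right-hand side is $+\infty$; your scale-wise bound then gives a divergent integral, so nothing is lost), and the passage from closed-ball lower regularity to the open balls $\{d(\xi,\cdot)<s\}$ appearing in the layer-cake formula, which follows by taking radii $s'\uparrow s$.
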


\begin{proof}
    With $\alpha=c^{-1/D}$, we have $\nu(B(\xi,\ga r))\geq c\alpha^D r^D= r^D$.
	Denote $B=B(\xi,\ga r)$ with $r=\nu(E)^\frac{1}{D}$. Then $\nu(B) \geq \nu(E)$, and so
	\begin{align*}
		\nu\left((Z\setminus E)\cap B\right)&=\nu(B)-\nu(E\cap B)
		\geq \nu(E)-\nu(E\cap B)
		= \nu\left(E\cap (Z\setminus B)\right).
	\end{align*}
	It follows that
	\begin{align*}
		\int_{Z\setminus E}\frac{\mathrm{d}\nu(\eta)}{ d^{D}(\xi,\eta)}
		&=\int_{(Z\setminus E)\cap B}\frac{\mathrm{d}\nu(\eta)}{ d^{D}(\xi,\eta)} + \int_{(Z\setminus E)\cap (Z\setminus B)}\frac{\mathrm{d}\nu(\eta)}{ d^{D}(\xi,\eta)}\\
		&\geq (\alpha r)^{-D}\nu\left((Z\setminus E)\cap B\right) + \int_{(Z\setminus E)\cap (Z\setminus B)}\frac{\mathrm{d}\nu(\eta)}{ d^{D}(\xi,\eta)}\\
		&\geq (\alpha r)^{-D}\nu\left(E\cap (Z\setminus B)\right) + \int_{(Z\setminus E)\cap (Z\setminus B)}\frac{\mathrm{d}\nu(\eta)}{ d^{D}(\xi,\eta)}\\
		&\geq \int_{E\cap (Z\setminus B)}\frac{\mathrm{d}\nu(\eta)}{ d^{D}(\xi,\eta)} + \int_{(Z\setminus E)\cap (Z\setminus B)}\frac{\mathrm{d}\nu(\eta)}{ d^{D}(\xi,\eta)}\\
		&=\int_{Z\setminus B}\frac{\mathrm{d}\nu(\eta)}{ d^{D}(\xi,\eta)},
	\intertext{with Lemma \ref{lem:sobolev-for-a-ball} below, we continue:}
		&\geq cD\left(\log(\diam(Z))-\log(\alpha r)\right)\\
		&=cD\log(\diam(Z)) - c\log(\nu(E)) - cD\log(c^{-1/D}). \qedhere
	\end{align*}
\end{proof}

\begin{lemma}\label{lem:sobolev-for-a-ball}
	Let $(Z,d,\nu)$ be a bounded Ahlfors--David $D$-regular metric measure space. Denote $0<c\leq 1\leq C$ to be the regularity constants, i.e. $cr^{D}\leq \nu(B(\cdot,r)) \leq Cr^D$ for $0\leq r\leq\diam(Z)$. Then for any $\xi\in Z$ and $0\leq r\leq\diam(Z)$, we have
	\begin{equation*}
		cD\log\left(\frac{\diam(Z)}{r}\right)
		\leq
		\int_{Z\setminus B(\xi,r)}\frac{\mathrm{d}\nu(\eta)}{d^{D}(\xi,\eta)}
		\leq
		C\left(1+ D\log\left(\frac{\diam(Z)}{r}\right)\right).
	\end{equation*}
\end{lemma}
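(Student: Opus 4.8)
My plan is to collapse the integral to a one-dimensional computation against the radial distribution function. Write $R=\diam(Z)$ and, for the fixed $\xi$, put $V(t)=\nu(B(\xi,t))$, so that Ahlfors--David regularity reads $ct^D\le V(t)\le Ct^D$ for $0\le t\le R$, while $V(t)=\nu(Z)$ for $t\ge R$ because $d(\xi,\eta)\le R$ for every $\eta$. Pushing $\nu$ forward under $\eta\mapsto d(\xi,\eta)$ and using the elementary identity $s^{-D}=D\int_s^{\infty}t^{-D-1}\,\mathrm{d}t$ together with Fubini, I would rewrite
\[
\int_{Z\setminus B(\xi,r)}\frac{\mathrm{d}\nu(\eta)}{d^{D}(\xi,\eta)}
= D\int_r^{\infty} t^{-D-1}\,\nu\big(\{\eta: r<d(\xi,\eta)<t\}\big)\,\mathrm{d}t ,
\]
which is the same as the Stieltjes integral $\int_{(r,R]} t^{-D}\,\mathrm{d}V(t)$. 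Both bounds then reduce to controlling the annular measures $\nu(\{r<d(\xi,\cdot)<t\})$, which are squeezed between $V(t)-V(r)$ and $V(t)$.

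For the upper bound I would split the $t$-integral at $R$. On $(r,R]$ I use $\nu(\{r<d(\xi,\cdot)<t\})\le V(t)\le Ct^D$, so this part contributes at most $D\int_r^R Ct^{-1}\,\mathrm{d}t=CD\log(R/r)$; on $(R,\infty)$ the annular measure is at most $\nu(Z)\le CR^D$ and $D\int_R^{\infty}t^{-D-1}\,\mathrm{d}t=R^{-D}$, contributing at most $C$. Summing gives the asserted $C\big(1+D\log(\diam(Z)/r)\big)$. This direction is routine.

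The lower bound is where I expect the real work. Discarding the tail $t>R$ and feeding the lower regularity $V(t)\ge ct^D$ into the cumulative term (for instance via integration by parts, $\int_{(r,R]}t^{-D}\,\mathrm{d}V=R^{-D}V(R)-r^{-D}V(r)+D\int_r^R t^{-D-1}V(t)\,\mathrm{d}t$) produces the leading term $D\int_r^R ct^{-1}\,\mathrm{d}t=cD\log(R/r)$. The main obstacle is that Ahlfors--David regularity only pins $V$ down up to the gap between $c\le 1$ and $C$: the boundary term $-r^{-D}V(r)$ can be as negative as $-C$, and a naive annular lower bound $\nu(\{r<d(\xi,\cdot)<t\})\ge ct^D-Cr^D$ loses an additive multiple of $\log(C/c)$. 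The crucial point I would stress is to insert $V(t)\ge ct^D$ into the \emph{cumulative} integral term rather than differencing consecutive annuli (whose individual lower bounds degrade by the factor $C/c$); this is what isolates the sharp coefficient $c$ in $cD\log(\diam(Z)/r)$, with the remaining boundary contributions being lower order and harmless for the downstream use in Lemma \ref{lem:hitchhikers-lemma-6.1}.
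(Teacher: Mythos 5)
Your reduction to a one--dimensional integral against the radial distribution function is essentially the paper's own route (the paper phrases it as a layer--cake decomposition over heights $t\in[\diam(Z)^{-D},r^{-D}]$; your identity $s^{-D}=D\int_s^\infty t^{-D-1}\,\mathrm{d}t$ plus Fubini is the same computation in Stieltjes form), and your upper bound is correct and agrees with the paper's, constant for constant. The issue is the lower bound. Your own bookkeeping shows what you actually get: the correct integrand is the annulus measure $\nu(\{r<d(\xi,\cdot)<t\})\geq ct^D-\nu(B(\xi,r))\geq ct^D-Cr^D$, and after integrating, the boundary term $-r^{-D}\nu(B(\xi,r))$ costs an additive $-C$ while $R^{-D}\nu(Z)$ only returns $+c$, so the argument proves
\begin{equation*}
	\int_{Z\setminus B(\xi,r)}\frac{\mathrm{d}\nu(\eta)}{d^{D}(\xi,\eta)}
	\;\geq\;
	cD\log\!\left(\frac{\diam(Z)}{r}\right)-(C-c),
\end{equation*}
and no rearrangement removes the additive loss. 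Your closing assertion that the boundary contributions are ``lower order and harmless'' is not a proof of the inequality as stated, so read literally your argument has a gap.

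However, that gap cannot be closed, because the stated lower bound is false. Take $Z=[0,1]$ with Lebesgue measure and $D=1$, where $c=1$ is a valid lower regularity constant (and $C=2$ a valid upper one); for $\xi=\tfrac12$ and $r=\tfrac12$ the closed ball $B(\xi,r)$ is all of $Z$, so the integral vanishes, while $cD\log(\diam(Z)/r)=\log 2>0$. The paper's own proof commits exactly the error you avoid: in its layer--cake ``equality'' it writes the full ball measure $\nu(\{d^{-D}(\xi,\cdot)>t\})$ where the correct set is $\{d^{-D}(\xi,\cdot)>t\}\setminus B(\xi,r)$, and lower Ahlfors regularity controls the ball but not that annulus --- the discrepancy is precisely your $-r^{-D}\nu(B(\xi,r))\geq -C$ term. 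So your corrected inequality, with the $-(C-c)$ defect, is the true statement, and (as you anticipate) it suffices for every downstream use of the lower bound: in Lemma \ref{lem:hitchhikers-lemma-6.1} it only shifts the additive constant, which the proof of Theorem \ref{thm:log-sobolev-ineq} absorbs into $c_1$ in \eqref{localeq:Hitchhiker-pre6.15}, and in Proposition \ref{prop:compactness-of-kernel-ops} only the divergence $\ess\inf N_\eps\to\infty$ is needed. The right fix is to restate the lemma (and propagate the constant through Lemma \ref{lem:hitchhikers-lemma-6.1}) with this additive correction, rather than to claim the clean bound.
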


\begin{proof}
Using the layer cake formula, we have
\begin{align*}
	\int_{Z\setminus B(\xi,r)}\frac{\mathrm{d}\nu(\eta)}{d^{D}(\xi,\eta)}
	&=\int\nolimits_{\diam(Z)^{-D}}^{r^{-D}} \nu\!\left(\{d^{-D}(\xi,\cdot) >t \} \right)\mathrm{d}t
	+\int\nolimits_{0}^{\diam(Z)^{-D}}  \nu\!\left(Z\setminus B(\xi,r)\right)\mathrm{d}t.
\end{align*}
The latter integral equals
\begin{equation*}
\diam(Z)^{-D}\cdot \nu(Z\setminus B(\xi,r))\leq
\diam(Z)^{-D}\cdot \nu(Z) \leq C.
\end{equation*}
For the former, we calculate:
\begin{align*}
	\int\nolimits_{\diam(Z)^{-D}}^{r^{-D}} \nu\!\left(\{d^{-D}(\xi,\cdot) >t \} \right)\mathrm{d}t
	&=\int\nolimits_{\diam(Z)^{-D}}^{r^{-D}} \nu\!\left(\{d(\xi,\cdot) < t^{-1/D} \} \right)\mathrm{d}t\\
	&\asymp \int\nolimits_{\diam(Z)^{-D}}^{r^{-D}} t^{-1}\mathrm{d}t 
	= \left[ \log(t) \right]_{\diam(Z)^{-D}}^{r^{-D}} \\
	&=\log(\diam(Z)^D)-\log(r^D)=D\log(\diam(Z)/r),
\end{align*}
where $\asymp$ uses Ahlfors regularity.
The claim now follows.
\end{proof}

\begin{proof}[Proof of Theorem \ref{thm:log-sobolev-ineq}]
	We start with a series of reductions.
	We assume that $f\in L^p$ and $\mathcal{E}_{\log,p}(f)<\infty$, otherwise the inequality holds trivially.

	By adjusting the asymptotic constant in the inequality, we can assume that $\nu(Z)=\diam(Z)=1$.

	The inequality doesn't change if we multiply $f$ by a scalar, and holds when $f\equiv 0$, so we may assume $\|f|L^p\|=1$.

	As $\left| |x|-|y|\right| \leq |x-y|$ for any $x,y\in\mathbb{C}$, it is sufficient to prove the inequality for $|f|$, and thus we can assume that $f\geq 0$.

	Finally, we claim that it suffices to prove the inequality
	\begin{equation}\label{localeq:proof-of-log-sobolev-g}
		\int |g|^p\log^+|g|^p \mathrm{d}\nu \prec
		1 + \mathcal{E}_{\log,p}(g).
	\end{equation}
	for $g=\max(f,1)$. Indeed, the values of the left-hand sides of \eqref{eq:log-sobolev-ineq} and \eqref{localeq:proof-of-log-sobolev-g} are the same, and $\mathcal{E}_{\log,p}(g)\leq \mathcal{E}_{\log,p}(f)$, since if $0\leq f(x)\leq 1$ and $f(y)\geq 1$, then
	\begin{equation*}
		|f(y)-f(x)| = f(y)-f(x) \geq f(y) - 1 = |g(y)-g(x)|.
	\end{equation*}
	Observe that $\|g|L^p\|\leq 2^{1/p}$.

	Summarizing, for the rest of the proof we assume that $f:Z\to [1,\infty)$ and $1\leq \|f|L^p\|^p\leq 2$, and we aim to prove \eqref{eq:log-sobolev-ineq} for such $f$.

	For the bulk of the argument, we closely follow the proof of the standard fractional Sobolev inequality in \cite[Section 6]{Sobolev-Hitchhikers}. We use the same notation for easier comparison.

	We discretise: for $k\in\mathbb{Z}$ we denote $A_k=\{|f|>2^k\}$ and $a_k = \nu(A_k)$. We further subdivide into $D_k=\{2^{k+1}\geq|f|> 2^{k}\}$ and denote $d_k=\nu(D_k)$. Note that since $\nu(Z)=1$, we have $a_k^{-1}\geq 1$ and thus $\log(a_k^{-1})\geq0$ for all $k\in\mathbb{Z}$ such that $a_k\not=0$. Furthermore $a_k=1$ for all $k<0$ by our assumption on $f$, and $a_{k}\geq a_{k+1}$ for all $k$.

	Observe that
	\begin{align}\label{localeq:sum-bound-for-Hitchhiker-L6.3}
		\sum_{i\in\mathbb{Z}} d_i2^{pi}
		&\leq \sum_{i\in\mathbb{Z}} \int_{D_i} |f(\xi)|^p \mathrm{d}\nu(\xi)
		= \left\|f|L^p\right\|^p \leq 2.
	\end{align}
	In fact $\sum_{i\in\mathbb{Z}} d_i2^{pi} \asymp \text{const}$, but we shall not need the other bound.
	
	For $(x,y)\in D_i\times D_j$ with $j\leq i-2$ one has $|f(x)-f(y)|\geq 2^{i-1}$, and thus
	\begin{align}
		\sum_{\fratop{j\in\mathbb{Z}}{j\leq i-2}}\int_{D_j}\frac{|f(x)-f(y)|^p}{d(x,y)^{D}}\mathrm{d}\nu(y)
		&\geq 2^{\fratop{p(i-1)}\sum_{j\in\mathbb{Z}}{j\leq i-2}} \int_{D_j}\frac{\mathrm{d}\nu(y)}{d(x,y)^{D}} \notag\\
		&= 2^{p(i-1)}\int_{Z\setminus A_{i-1}}\frac{\mathrm{d}\nu(y)}{d(x,y)^{D}} \notag\\
		&\geq c_0 2^{pi}\log(a_{i-1}^{-1}) - c_1 2^{pi} \label{localeq:Hitchhiker-pre6.15}
	\end{align}
	for suitable constants $c_0,c_1>0$ by Lemma \ref{lem:hitchhikers-lemma-6.1}. Now we execute the main scheme of the proof of \cite[Lemma 6.3]{Sobolev-Hitchhikers}: obtain the analogues of \cite[(6.15)--(6.17)]{Sobolev-Hitchhikers} using our \eqref{localeq:Hitchhiker-pre6.15}.
	For any $i\in\mathbb{Z}$ we thus have
	\begin{equation}\label{localeq:Hitchhiker-6.15}
		\sum_{\fratop{j\in\mathbb{Z}}{j\leq i-2}}\iint_{D_i\times D_j}\frac{|f(x)-f(y)|^p}{d(x,y)^{D}}\mathrm{d}\nu(x)\mathrm{d}\nu(y)
		\geq c_0 2^{pi}\log(a_{i-1}^{-1})d_i - c_12^{pi}d_i
	\end{equation}	
	Denoting $S=\sum_{i\in\mathbb{Z}, a_{i-1}\not=0}2^{pi}\log(a_{i-1}^{-1})d_{i}$, summing over $i$, and using \eqref{localeq:sum-bound-for-Hitchhiker-L6.3}, we further get
	\begin{equation}\label{localeq:Hitchhiker-6.17}
		\sum_{\fratop{i\in\mathbb{Z}}{a_{i-1}\not=0}}\sum_{\fratop{j\in\mathbb{Z}}{j\leq i-2}}\iint_{D_i\times D_j} \frac{|f(x)-f(y)|^p}{d(x,y)^{D}}\mathrm{d}\nu(x)\mathrm{d}\nu(y)
		\geq c_0 S - 2c_1.
	\end{equation}
	Coming back to \eqref{localeq:Hitchhiker-6.15} and noting that
	$d_i=a_i-\sum_{\ell\in\mathbb{Z},\, \ell\geq i+1} d_{\ell}$, we also get
	\begin{multline}\label{localeq:Hitchhiker-6.16}
		\sum_{\fratop{j\in\mathbb{Z}}{j\leq i-2}}\iint_{D_i\times D_j}\frac{|f(x)-f(y)|^p}{d(x,y)^{D}}\mathrm{d}\nu(x)\mathrm{d}\nu(y)
		\\\geq c_0\left(2^{pi}\log(a_{i-1}^{-1})a_i-\textstyle\sum_{\ell\in\mathbb{Z},\,\ell\geq i+1} 2^{pi} \log(a_{i-1}^{-1})d_{\ell} \right) -c_12^{pi}d_i.
	\end{multline}
	Running the same calculation as in \cite[(6.14)]{Sobolev-Hitchhikers} yields
	\begin{equation*}
		S\geq \sum_{i\in\mathbb{Z},\, a_{i-1}\not=0}\sum_{\ell\in\mathbb{Z},\,\ell\geq i+1} 2^{pi}\log(a_{i-1}^{-1})d_{\ell}.
	\end{equation*}
	Using this, \eqref{localeq:sum-bound-for-Hitchhiker-L6.3}, and \eqref{localeq:Hitchhiker-6.17} in what follows, we start by summing over $i$ in \eqref{localeq:Hitchhiker-6.16}:
	\begin{align*}
		\sum_{\fratop{i\in\mathbb{Z}}{a_{i-1}\not=0}}&\sum_{\fratop{j\in\mathbb{Z}}{j\leq i-2}}\iint_{D_i\times D_j} \frac{|f(x)-f(y)|^p}{d(x,y)^{D}}\mathrm{d}\nu(x)\mathrm{d}\nu(y)
		\\
		&\geq c_0\left[\sum_{\fratop{i\in\mathbb{Z}}{a_{i-1}\not=0}}2^{pi}\log(a_{i-1}^{-1})a_i-\sum_{\fratop{i\in\mathbb{Z}}{a_{i-1}\not=0}}\sum_{\fratop{\ell\in\mathbb{Z}}{\ell\geq i+1}} 2^{pi} \log(a_{i-1}^{-1})d_{\ell} \right] -c_1\!\!\!\sum_{\fratop{i\in\mathbb{Z}}{a_{i-1}\not=0}}\!\!\! 2^{pi}d_i\\
		&\geq c_0\left[\sum_{\fratop{i\in\mathbb{Z}}{a_{i-1}\not=0}}2^{pi}\log(a_{i-1}^{-1})a_i-S \right] - 2c_1 \\
		&\geq c_0\sum_{\fratop{i\in\mathbb{Z}}{a_{i-1}\not=0}}2^{pi}\log(a_{i-1}^{-1})a_i
		- \sum_{\fratop{i\in\mathbb{Z}}{a_{i-1}\not=0}}\sum_{\fratop{j\in\mathbb{Z}}{j\leq i-2}}\iint_{D_i\times D_j} \frac{|f(x)-f(y)|^p}{d(x,y)^{D}}\mathrm{d}\nu(x)\mathrm{d}\nu(y)\\
		&\hspace{1em} -4c_1.
	\end{align*}
	By moving the integral on the right-hand side to the left, we obtain
	\begin{equation*}
		\sum_{\fratop{i\in\mathbb{Z}}{a_{i-1}\not=0}}\sum_{\fratop{j\in\mathbb{Z}}{j\leq i-2}}\iint_{D_i\times D_j} \frac{|f(x)-f(y)|^p}{d(x,y)^{D}}\mathrm{d}\nu(x)\mathrm{d}\nu(y)
		\geq \frac{c_0}{2}\sum_{\fratop{i\in\mathbb{Z}}{a_{i-1}\not=0}}2^{pi}\log(a_{i-1}^{-1})a_i - 2 c_1.
	\end{equation*}
	As in \cite[(6.19)]{Sobolev-Hitchhikers}, by using symmetry and that the sets $D_i\times D_j$ partition $Z^2$, we get
	\begin{align}\label{localeq:end-of-L6.3-in-Hitchhikers}
		\iint_{Z^2} \frac{|f(x)-f(y)|^p}{d(x,y)^{D}}\mathrm{d}\nu(x)\mathrm{d}\nu(y)+4c_1
		&\geq c_0\sum_{\fratop{i\in\mathbb{Z}}{a_{i-1}\not=0}}2^{pi}\log(a_{i-1}^{-1})a_i \notag\\
		&= c_0\sum_{i=1}^{N} 2^{pi}\log(a_{i-1}^{-1})a_i \notag\\
		&\geq c_0\sum_{i=1}^{N} 2^{pi}\log(a_{i-1}^{-1})d_i,
	\end{align}
	where $N=\max\{i: a_{i-1}\not=0\} \in \mathbb{N}\cup\{\infty\}$, or equivalently, $N$ is the smallest integer (or $\infty$) such that $f\leq 2^N$ almost everywhere.
	To finish the proof, we depart from \cite{Sobolev-Hitchhikers}. By Markov's Inequality and \eqref{localeq:sum-bound-for-Hitchhiker-L6.3}, we have
	\begin{equation*}
		a_k = \nu\left(\{ |f|^p > 2^{pk} \}\right) \leq 2^{-pk} \|f|L^p\|^p \leq 2^{-pk+1}.
	\end{equation*}
	Consequently, if $a_{i-1}\not=0$, then
	\begin{equation*}
		\log(a_{i-1}^{-1}) \geq \log\!\left( 2^{p(i-1)-1} \right) = \log(2^{p(i+1)})-\log(2^{2p+1}).		
	\end{equation*}
	Thus we have
	\begin{align*}
		\sum_{i=1}^{N} 2^{pi}\log(a_{i-1}^{-1})d_i
		&\geq \sum_{i=1}^{N} 2^{pi} \log(2^{p(i+1)})d_i - \log(2^{2p+1}) \cdot \sum_{i=1}^{N}2^{pi}d_i\\
		&\geq 2^{-p}\sum_{i=1}^{N} 2^{p(i+1)} \log^+(2^{p(i+1)})d_i - 2\log(2^{2p+1})\\
		&\geq 2^{-p}\int |f|^p\log^+|f|^p\mathrm{d}\nu - 2\log(2^{2p+1}).
	\end{align*}
	This, together with \eqref{localeq:end-of-L6.3-in-Hitchhikers}, implies \eqref{localeq:proof-of-log-sobolev-g}, and we are done.
\end{proof}

\section{Geometric control}\label{sec:geometric-control}

In this section we estimate the multipliers \eqref{eq:multipliers-defn} from Proposition \ref{prop:basic-calculation}, and for this we need the "inside", i.e. the setting of Assumption \ref{ass:3}. In this setting, let us extend the notation for the metric $d$ as follows: if $x\in X$ and $\xi\in Z=\partial X$, let $d(x,\xi)=\exp(-\eps\langle x,\xi\rangle_o)$. As $X$ is strongly hyperbolic, this extension of $d$ still satisfies a "triangle inequality", meaning $d(x,\xi)\leq d(x,\eta)+d(\eta,\xi)$ for any $x\in X, \eta,\xi\in \partial X$.

Recall the formula \eqref{eq:multipliers-defn} for the multipliers that we estimate in this section: for $g\in\Gamma$, $t\in\mathbb{R}$, and $\omega\in Z$, we have $\Theta_{g,t}(\omega) = \int_{Z}d^{-D}(\omega,\zeta) \left|\frac{\D{g}^{D/(2p)+it}(\zeta)}{\D{g}^{D/(2p)+it}(\omega)}-1\right|^p \mathrm{d}\nu(\zeta)$. Using the formulas from \ref{subsec:strong-hyperbolicity}, we have
\begin{equation*}
\D{g}(\omega)=\exp(2\eps\langle\omega,go\rangle_o)\exp(-\eps\rho(go,o)) = d^{-2}(\omega,go)\exp(-\eps\rho(go,o)),
\end{equation*}
and hence we can rewrite
$\Theta_{g,t}(\omega) = \int_{Z}d^{-D}(\omega,\zeta) \left|\frac{d(\omega,go)^{D/p+2it}}{d(\zeta,go)^{D/p+2it}}-1\right|^p \mathrm{d}\nu(\zeta)$. There is nothing special about $go\in X$, so, abusing the notation somewhat, let us write for $a\in X$,
\begin{equation}\label{eq:multipliers-defn-2}
\Theta_{a,t}(\omega)=\int_{Z}d^{-D}(\omega,\zeta) \left|1-\left(\frac{d(\omega,a)}{d(\zeta,a)}\right)^{D/p+it}\right|^p \mathrm{d}\nu(\zeta).
\end{equation}

\begin{proposition}\label{prop:bound-on-multipliers-1}
	In the setting of Assumption \ref{ass:3} and notation \eqref{eq:multipliers-defn-2}, for any $t\in\mathbb{R}$, $a\in X$, and $\eta\in Z$ we have
	\begin{equation*}
		\Theta_{a,t}(\eta) \prec_t -\log(d(a,\eta))+\rho(a,o)+1.
	\end{equation*}
\end{proposition}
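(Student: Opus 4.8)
The plan is to bound the integrand pointwise and split the domain of integration according to the size of $d(\eta,\zeta)$ relative to $R:=d(a,\eta)$. Throughout I write $w=w(\zeta)=d(\eta,a)/d(\zeta,a)=R/d(\zeta,a)$ and $s=D/p+it$, so the integrand is $d^{-D}(\eta,\zeta)\,|1-w^s|^p$. First I would record the geometric facts needed. The strong hyperbolicity inequality, extended continuously to $X\cup\partial X$, yields a genuine triangle inequality among points of $X\cup\partial X$; in particular $|d(\zeta,a)-R|\le d(\eta,\zeta)$ and $d(\zeta,\zeta')\le d(\zeta,a)+d(a,\zeta')$. Since the Gromov product is nonnegative on the boundary, $R=e^{-\eps\langle a,\eta\rangle_o}\le 1$, so $-\log R\ge 0$; and since $\langle a,\zeta\rangle_o\le\rho(a,o)$ we get the uniform lower bound $d(\zeta,a)\ge e^{-\eps\rho(a,o)}=:\delta_a$. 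For the numerator I would use two bounds: the crude $|1-w^s|^p\le 2^{p-1}(1+w^D)$ (valid since $|w^s|=w^{D/p}$), and, when $w$ is near $1$, the refined $|1-w^s|=|1-e^{s\log w}|\le |s|\,|\log w|\,e^{(D/p)|\log w|}$.

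The technical heart is the auxiliary estimate
\[
\int_Z d^{-D}(\zeta,a)\,\mathrm{d}\nu(\zeta)\prec 1+\rho(a,o).
\]
I would prove it by a layer-cake computation. The key input is that the sublevel set $\{\zeta:d(\zeta,a)<\sigma\}$ has diameter at most $2\sigma$ (by the extended triangle inequality $d(\zeta,\zeta')\le d(\zeta,a)+d(a,\zeta')$), hence by Ahlfors upper regularity its measure is $\prec\sigma^D$ for $\sigma\le 1$; moreover it is empty for $\sigma\le\delta_a$. Writing $\int_Z d^{-D}(\zeta,a)\,\mathrm{d}\nu=\int_0^\infty\nu(\{d(\cdot,a)<\sigma\})\,D\sigma^{-D-1}\,\mathrm{d}\sigma$ and splitting at $\sigma=1$, the tail $\sigma>1$ contributes $O(1)$, while the range $\delta_a\le\sigma\le 1$ contributes $\prec D\int_{\delta_a}^1\sigma^{-1}\,\mathrm{d}\sigma=D\log(1/\delta_a)=\eps D\rho(a,o)$; this is exactly where the linear dependence on $\rho(a,o)$ comes from. (Note that the cruder change-of-variables bound via \eqref{eq:change-of-var} only gives the exponential $e^{\eps D\rho(a,o)/2}$, so the ball structure of the sublevel sets is essential.)

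With these in hand I would split $Z=\{d(\eta,\zeta)\le R/2\}\cup\{d(\eta,\zeta)>R/2\}$. On the near piece $d(\eta,\zeta)\le R/2$ the triangle bound gives $d(\zeta,a)\in[R/2,3R/2]$, so $w\in[2/3,2]$ and $|\log w|\le 2\,d(\eta,\zeta)/R$; the refined numerator bound then yields $|1-w^s|^p\prec_t (d(\eta,\zeta)/R)^p$, and an annular (Ahlfors) computation gives $R^{-p}\int_{B(\eta,R/2)}d^{p-D}(\eta,\zeta)\,\mathrm{d}\nu\prec R^{-p}\cdot R^p=O(1)$. On the far piece $d(\eta,\zeta)>R/2$ I would use the crude bound and treat $1+w^D$ separately: the constant part gives $\int_{Z\setminus B(\eta,R/2)}d^{-D}(\eta,\zeta)\,\mathrm{d}\nu\prec 1-\log R$ by Lemma \ref{lem:sobolev-for-a-ball} (this produces the $-\log d(a,\eta)$ term), while for the part $w^D=R^D/d^D(\zeta,a)$ I would use $R^D/d^D(\eta,\zeta)\le 2^D$ on the far piece to bound it by $2^D\int_Z d^{-D}(\zeta,a)\,\mathrm{d}\nu\prec 1+\rho(a,o)$ via the auxiliary estimate. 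Summing the three contributions gives $\Theta_{a,t}(\eta)\prec_t -\log d(a,\eta)+\rho(a,o)+1$.

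The main obstacle is the auxiliary estimate $\int_Z d^{-D}(\zeta,a)\,\mathrm{d}\nu\prec 1+\rho(a,o)$: getting the dependence on $\rho(a,o)$ to be \emph{linear} rather than exponential is what makes the whole proposition work, and it rests on combining the uniform lower bound $d(\zeta,a)\ge e^{-\eps\rho(a,o)}$ with the fact that sublevel sets of $d(\cdot,a)$ are genuine metric balls, so that Ahlfors regularity applies. The remaining bookkeeping — choosing the cut-off $R/2$ and checking convergence of the singular integrals near the diagonal — is routine once these estimates are in place.
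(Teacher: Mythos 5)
Your proof is correct, and it runs on the same engine as the paper's: a decomposition of $Z$ according to the position of $\zeta$ relative to $\eta$ and $a$, an exponential/mean-value bound where $w=d(a,\eta)/d(\zeta,a)$ is close to $1$, and logarithmic integral estimates coming from Ahlfors regularity. The organization, however, is genuinely different. The paper splits $Z$ into \emph{three} pieces $U(\eta)=B(\eta,R/2)$, $U(a)=B(a,R/2)$ and the remainder $W$ (where $R=d(a,\eta)$), and treats $U(a)$ separately via the two-sided comparison $d(\xi,\eta)\asymp d(a,\eta)$ there, together with Lemma \ref{lem:shadow-B} and Lemma \ref{lem:int-over-ball-power-D} applied on the ball $B(a,R/2)$. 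You use only \emph{two} pieces: your near piece coincides with $U(\eta)$ and your estimate there is the paper's (your inequality $|1-e^{z}|\leq|z|e^{|\re z|}$ playing the role of its Mean Value Theorem, and Lemma \ref{lem:int-over-ball-calculation} closing the computation), while your far piece merges $U(a)$ and $W$: the constant part of the crude bound $|1-w^{s}|^{p}\prec 1+w^{D}$ is handled by Lemma \ref{lem:sobolev-for-a-ball} exactly as the paper does on $W$, and the $w^{D}$ part is absorbed, via $R^{D}/d^{D}(\eta,\zeta)\leq 2^{D}$, into the global estimate $\int_{Z}d^{-D}(\zeta,a)\,\mathrm{d}\nu(\zeta)\prec 1+\rho(a,o)$. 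That auxiliary estimate is precisely the paper's Lemma \ref{lem:int-over-ball-power-D} at $r=\diam(Z)=1$ (note $Z=B(a,1)$ because Gromov products are nonnegative), and your layer-cake proof of it matches the paper's, except for one simplification: you obtain the measure bound $\nu(\{d(\cdot,a)<\sigma\})\prec\sigma^{D}$ (the content of Lemma \ref{lem:shadow-B}) from the elementary fact that this sublevel set lies in a $Z$-ball of radius $2\sigma$ about any of its points, rather than from the tree-approximation/shadow argument the paper cites --- which is legitimate, since only the upper containment is needed for the upper bound. What your merging buys is the elimination of the three-way case analysis and of the comparisons needed on $U(a)$; what the paper's finer split buys is more localized intermediate estimates, but nothing essential for the final linear-in-$\rho(a,o)$ bound, which in both arguments comes from the same source: the uniform lower bound $d(\cdot,a)\geq e^{-\eps\rho(a,o)}$ feeding a logarithmic integral.
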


\begin{corollary}\label{coro:bound-on-multipliers-2}
	In the setting of Assumption \ref{ass:3} and notation \eqref{eq:multipliers-defn} from Proposition \ref{prop:basic-calculation}, for any $g\in\Gamma$ and $t\in\mathbb{R}$, we have
	\begin{equation*}
		\left\| \Theta_{g,t} | L_{\exp}\right\| \prec_t 1+\rho(go,o).
	\end{equation*}
\end{corollary}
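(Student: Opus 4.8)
The plan is to combine the pointwise estimate of Proposition \ref{prop:bound-on-multipliers-1} (applied with $a=go$) with the elementary behaviour of the Luxemburg $L_{\exp}$-norm on the probability space $(Z,\nu)$. Under the identification $\Theta_{g,t}=\Theta_{go,t}$ from \eqref{eq:multipliers-defn-2}, Proposition \ref{prop:bound-on-multipliers-1} gives, for every $\eta\in Z$,
\begin{equation*}
	\Theta_{g,t}(\eta)\prec_t -\log\big(d(go,\eta)\big)+\rho(go,o)+1.
\end{equation*}
Since $\|\cdot\,|L_{\exp}\|$ is a norm, it is homogeneous, subadditive, and monotone with respect to the pointwise order on nonnegative functions. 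Hence it suffices to bound the $L_{\exp}$-norms of the two summands on the right separately. The term $\rho(go,o)+1$ is a constant function, and on a probability space the $L_{\exp}$-norm of a constant $K\geq0$ is exactly $K$ (because $\int_Z\Psi(K/k)\,\mathrm{d}\nu=\Psi(K/k)\leq1$ iff $k\geq K$); so its contribution is precisely $\rho(go,o)+1$, already of the desired order.

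It remains to control $\big\|{-\log d(go,\cdot)}\,\big|\,L_{\exp}\big\|$. Here I would use the identity $-\log d(go,\eta)=\eps\langle go,\eta\rangle_o$ coming from the definition of the extended metric $d$. The crudest route already suffices: since $\langle go,\eta\rangle_o\leq\rho(go,o)$ for all $\eta$ (the Gromov-product inequality used already in Proposition \ref{prop:extend-pt-to-boundary}, valid on $\partial X$ by continuity), one has $-\log d(go,\eta)\leq\eps\,\rho(go,o)$ uniformly in $\eta$. Therefore $\big\|{-\log d(go,\cdot)}\,\big|\,L_{\exp}\big\|\leq\big\|{-\log d(go,\cdot)}\,\big|\,L^\infty\big\|\leq\eps\,\rho(go,o)$, using $\|\cdot\,|L_{\exp}\|\leq\|\cdot\,|L^\infty\|$ from Subsection \ref{subsec:orlicz-spaces}. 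Adding the two contributions yields $\|\Theta_{g,t}\,|\,L_{\exp}\|\prec_t 1+\rho(go,o)$.

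I expect no serious obstacle; the only points requiring care are the normalisation facts about the Luxemburg norm (value on constants, domination by $L^\infty$) and the Gromov-product inequality. It is worth noting, however, that the logarithmic term is what makes the $L_{\exp}$ formulation the natural one: instead of the crude $L^\infty$ bound, one may observe that the superlevel sets $\{\eta:-\log d(go,\eta)>\lambda\}=\{\eta:d(go,\eta)<e^{-\lambda}\}$ are shadows of measure $\asymp e^{-D\lambda}$ by Ahlfors--David regularity, uniformly in $g$. A layer-cake computation of $\int_Z\exp\!\big({-}\log d(go,\cdot)/k\big)\,\mathrm{d}\nu$ then gives $\big\|{-\log d(go,\cdot)}\,\big|\,L_{\exp}\big\|\prec1$ with constant independent of $g$, so the logarithmic term in fact contributes only $O(1)$ and all the growth comes from the constant term $\rho(go,o)+1$. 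Either way the corollary follows.
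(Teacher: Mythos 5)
Your proposal is correct and follows essentially the same route as the paper: apply Proposition \ref{prop:bound-on-multipliers-1} with $a=go$, bound $-\log d(go,\cdot)=\eps\langle go,\cdot\rangle_o\leq\eps\rho(go,o)$ pointwise, and conclude via $\|\cdot\,|L_{\exp}\|\leq\|\cdot\,|L^\infty\|$ on the probability space $(Z,\nu)$. Your closing remark---that the logarithmic term is in fact $O(1)$ in $L_{\exp}$ by a layer-cake argument (where the superlevel-set bound $\prec e^{-D\lambda}$ needs Lemma \ref{lem:shadow-B} for balls centred at points of $X$, and is only an upper bound since these balls can be empty)---is a correct refinement that the paper does not make, but it is not needed for the stated estimate.
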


\begin{proof}
	For any $g\in\Gamma$ and $\omega\in Z$ we have $0\leq \langle x,\omega\rangle_o\leq \rho(x,o)$, and thus $-\log(d(go,\omega))\leq \eps\rho(go,o)$. Hence by Proposition \ref{prop:bound-on-multipliers-1} we have $\|\Theta_{g,t}|L^\infty\|\leq (C_0\eps+C_1)\rho(go,o)+C_2$. The conclusion now follows since $L^\infty\subseteq L_{\exp}$ (as $\nu(Z)=1$ the constants even stay the same).
\end{proof}

We need some preparation for the proof of Proposition \ref{prop:bound-on-multipliers-1}. The following is \cite[Lemma 6.1]{Boucher-Spakula:2023} (the proof is a standard layer--cake calculation).

\begin{lemma}\label{lem:int-over-ball-calculation}
	Suppose \ref{ass:1}.
	Then for any $\alpha <D$, $\xi\in Z$, and $0\leq r\leq \diam(Z)$, we have
\begin{equation*}
	\int_{B(\xi,r)}\frac{\mathrm{d}\nu(\eta)}{d^{\alpha}(\xi,\eta)}\prec r^{D-\alpha}.
\end{equation*}
\end{lemma}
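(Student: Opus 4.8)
The plan is to compute the integral directly by the layer--cake (distribution function) formula, invoking only the upper Ahlfors--David regularity bound $\nu(B(\cdot,s))\le Cs^D$. First I would dispose of the trivial regime $\alpha\le0$ separately: there the integrand $d^{-\alpha}(\xi,\eta)=d(\xi,\eta)^{|\alpha|}$ is bounded by $r^{|\alpha|}$ on $B(\xi,r)$, so the integral is at most $r^{|\alpha|}\nu(B(\xi,r))\le Cr^{|\alpha|}r^{D}=Cr^{D-\alpha}$, which is already the desired bound. So for the rest one may assume $0<\alpha<D$.

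For the main case I would set $F(\eta)=d^{-\alpha}(\xi,\eta)$ and write
\[
\int_{B(\xi,r)}F\,\mathrm{d}\nu = \int_0^\infty \nu\big(\{\eta\in B(\xi,r): F(\eta)>t\}\big)\,\mathrm{d}t.
\]
Since $\alpha>0$, the condition $F(\eta)>t$ is equivalent to $d(\xi,\eta)<t^{-1/\alpha}$, so the superlevel set intersected with $B(\xi,r)$ is exactly $B(\xi,\min(r,t^{-1/\alpha}))$. Here I would note that the effective radius $\min(r,t^{-1/\alpha})\le r\le\diam(Z)$ always lies in the range where the regularity estimate applies, so no separate treatment of large radii is needed.

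Then I would split the $t$--integral at the threshold $t=r^{-\alpha}$, at which $t^{-1/\alpha}=r$. For $t\le r^{-\alpha}$ the distribution function equals $\nu(B(\xi,r))\le Cr^{D}$, contributing $Cr^{D}\cdot r^{-\alpha}=Cr^{D-\alpha}$. For $t>r^{-\alpha}$ it equals $\nu\big(B(\xi,t^{-1/\alpha})\big)\le Ct^{-D/\alpha}$, and the tail
\[
\int_{r^{-\alpha}}^{\infty} t^{-D/\alpha}\,\mathrm{d}t = \frac{\alpha}{D-\alpha}\,r^{D-\alpha}
\]
converges precisely because $0<\alpha<D$ forces the exponent $D/\alpha>1$. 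Summing the two pieces yields $\int_{B(\xi,r)}F\,\mathrm{d}\nu\prec r^{D-\alpha}$.

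There is no real obstacle in this argument; the only point requiring care is the hypothesis $\alpha<D$, which is exactly what guarantees convergence of the tail integral. The implied constant behaves like $\tfrac{\alpha}{D-\alpha}$ and blows up as $\alpha\uparrow D$, reflecting the logarithmic divergence at the endpoint $\alpha=D$ that is treated separately in Lemma \ref{lem:sobolev-for-a-ball}.
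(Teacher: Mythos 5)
Your proof is correct and is exactly the ``standard layer--cake calculation'' that the paper invokes for this Lemma (it gives no details, simply citing \cite[Lemma 6.1]{Boucher-Spakula:2023}): write the integral via the distribution function, split at $t=r^{-\alpha}$, and use only the upper Ahlfors bound $\nu(B(\cdot,s))\le Cs^{D}$, with convergence of the tail coming from $D/\alpha>1$. Your separate handling of $\alpha\le 0$ and the remark that the constant degenerates as $\alpha\uparrow D$ (matching the logarithmic endpoint case treated in Lemma \ref{lem:sobolev-for-a-ball}) are both correct.
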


There is no bound when $\alpha=D$, however there is an estimate if we are in the setting of Assumption \ref{ass:3}, and consider $a\in X$ and $B(a,r)=\{\xi\in Z\mid d(a,x)\leq r\}$. First we need a lemma.

\begin{lemma}\label{lem:shadow-B}
	In the setting of Assumption \ref{ass:3}, for any $a\in X$, and $0\leq r\leq 1=\diam(Z)$, we have $\nu(B(a,r))\prec r^D$. For $0\leq r<\exp(-\eps\rho(a,o))$, $B(a,r)=\emptyset$.
\end{lemma}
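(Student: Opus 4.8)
The plan is to treat the two claims separately. The emptiness statement is an immediate consequence of the bound $\langle a,\xi\rangle_o\leq\rho(a,o)$, while the measure estimate follows from a ``shadow is contained in a boundary ball'' argument combined with Ahlfors--David regularity.

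First I would dispose of the emptiness statement. For any $\xi\in Z$, realising $\xi$ as a limit of points $x_n\in X$ and using $\rho(a,x_n)\geq\rho(x_n,o)-\rho(a,o)$ together with the continuity of the Gromov product on $X\cup\partial X$, one obtains $\langle a,\xi\rangle_o\leq\rho(a,o)$. Hence $d(a,\xi)=\exp(-\eps\langle a,\xi\rangle_o)\geq\exp(-\eps\rho(a,o))$ for every $\xi\in Z$. Consequently, if $r<\exp(-\eps\rho(a,o))$ there is no $\xi\in Z$ with $d(a,\xi)\leq r$, that is, $B(a,r)=\emptyset$.

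For the measure bound I may assume $B(a,r)\neq\emptyset$ (otherwise $\nu(B(a,r))=0$ and there is nothing to prove) and fix some $\xi_0\in B(a,r)$. The key point is that the defining inequality of strong hyperbolicity, extended to the boundary by continuity of $\langle\cdot,\cdot\rangle_o$, yields $d(\xi_0,\eta)\leq d(\xi_0,a)+d(a,\eta)$ for all $\eta\in\partial X$, now with the interior point $a$ playing the role of the midpoint. Therefore, for any $\eta\in B(a,r)$ we get $d(\xi_0,\eta)\leq d(\xi_0,a)+d(a,\eta)\leq 2r$, so that $B(a,r)\subseteq B(\xi_0,2r)$, a genuine ball in $(Z,d)$. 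It then remains to invoke Ahlfors--David regularity: if $2r\leq\diam(Z)=1$, then $\nu(B(a,r))\leq\nu(B(\xi_0,2r))\prec (2r)^D\prec r^D$, while if $2r>1$ then $r>1/2$, so $\nu(B(a,r))\leq\nu(Z)=1\leq 2^D r^D\prec r^D$, and the claim holds in all cases.

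The only genuinely delicate point is the triangle inequality with the interior point as the \emph{midpoint}: the version recorded in the text has the interior point as an endpoint, so I would spell out that applying strong hyperbolicity to interior points $x_n\to\xi_0$, $y_n\to\eta$ with the fixed middle point $a$ and passing to the limit (using joint continuity of the extended Gromov product) gives exactly $d(\xi_0,\eta)\leq d(\xi_0,a)+d(a,\eta)$. Everything else is a routine application of Ahlfors regularity together with monotonicity of $\nu$.
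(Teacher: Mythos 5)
Your proof is correct, and for the measure bound it takes a genuinely different route from the paper. The paper's own proof goes through a shadow point: it asserts that there is a constant $c\geq 1$ such that for every $a\in X$ there is $\theta\in\partial X$ with $B(\theta,r/c)\subseteq B(a,r)\subseteq B(\theta,cr)$ whenever $B(a,r)\neq\emptyset$, justifying this two-sided comparison by approximating finite configurations of points by trees (citing Blach\`ere--Ha\"{\i}ssinsky--Mathieu), and then applies Ahlfors regularity. You instead take an arbitrary $\xi_0\in B(a,r)$ and prove only the upper inclusion $B(a,r)\subseteq B(\xi_0,2r)$, using the triangle inequality $d(\xi_0,\eta)\leq d(\xi_0,a)+d(a,\eta)$ with the \emph{interior} point $a$ as midpoint; you correctly identify this as the one step needing care, and your derivation --- apply the defining inequality of strong hyperbolicity to interior points $x_n\to\xi_0$, $y_n\to\eta$ with fixed midpoint $a$ and pass to the limit via the continuous extension of the Gromov product to $X\cup\partial X$ --- is exactly how such extended inequalities are obtained in this setting, so the step is sound. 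Your argument is more self-contained (no shadow point, no tree approximation, no external citation), at the cost of leaning on strong hyperbolicity through the continuity of the extended Gromov product; the paper's argument is more robust (a version survives in general hyperbolic settings where the Gromov product extends only up to bounded error) and yields the extra lower inclusion $B(\theta,r/c)\subseteq B(a,r)$, hence $\nu(B(a,r))\asymp r^D$ for nonempty balls, which is more than the lemma needs. Your handling of the case $2r>\diam(Z)$ and of the emptiness claim (via $\langle a,\xi\rangle_o\leq\rho(a,o)$) matches the paper's.
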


\begin{proof}
	This follows from the fact that there exists $c\geq1$, such that for any $a\in X$ there exists $\theta\in\partial X$, such that $B(\theta,r/c)\subseteq B(a,r)\subseteq B(\theta,cr)$ whenever $B(a,r)\not=\emptyset$. (One can take $\theta$ in a "shadow" of $a$.) This, in turn, follows from approximating finite collections of points in $X$ by trees, cf.~for instance \cite[Theorem 2.4]{Blachere-Haissinsky-Mathieu:2011}.

	The second claim, observe that for any $\omega\in\partial X$,  $\langle a,\omega\rangle_o\leq \rho(a,o)$. Equivalently, $d(a,\omega)\geq \exp(-\eps\rho(a,o))$.
\end{proof}

\begin{lemma}\label{lem:int-over-ball-power-D}
	In the setting of Assumption \ref{ass:3}, for any $a\in X$, and $\exp(-\eps\rho(a,o))\leq r\leq 1=\diam(Z)$, we have
\begin{equation*}
	\int_{B(a,r)}\frac{\mathrm{d}\nu(\omega)}{d^{D}(a,\omega)}
	\prec
	1 + \eps\rho(a,o) + \log(r)
	\leq 1+\eps\rho(a,o).
\end{equation*}
	For $0\leq r <\exp(-\eps\rho(a,o))$, the integral is zero.
\end{lemma}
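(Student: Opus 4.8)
The plan is to run a layer-cake (distribution function) computation, in the spirit of the proof of Lemma \ref{lem:sobolev-for-a-ball}, where the crucial input is the one-sided volume bound $\nu(B(a,r))\prec r^D$ for $a\in X$ supplied by Lemma \ref{lem:shadow-B}. The last assertion is immediate: if $0\leq r<\exp(-\eps\rho(a,o))$, then $B(a,r)=\emptyset$ by Lemma \ref{lem:shadow-B}, so the integral vanishes. Hence I may assume $\exp(-\eps\rho(a,o))\leq r\leq 1$, and I write $R=\exp(-\eps\rho(a,o))$ for brevity; note also that $d(a,\omega)\geq R$ for every $\omega\in\partial X$ (the second part of Lemma \ref{lem:shadow-B}), so the integrand $d^{-D}(a,\cdot)$ is bounded by $R^{-D}$ and the integral is finite.

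Next I would apply the layer-cake formula to the nonnegative, bounded integrand $d^{-D}(a,\cdot)$:
\[
\int_{B(a,r)}\frac{\mathrm{d}\nu(\omega)}{d^{D}(a,\omega)}
= \int_{0}^{\infty}\nu\big(\{\omega\in B(a,r): d^{-D}(a,\omega)>t\}\big)\,\mathrm{d}t.
\]
The level set is $\{\omega: d(a,\omega)\leq r\ \text{and}\ d(a,\omega)<t^{-1/D}\}$, which up to a $\nu$-null set equals $B(a,\min(r,t^{-1/D}))$. Two thresholds organise the $t$-integral: at $t=r^{-D}$ the minimum switches from $r$ to $t^{-1/D}$, and for $t>R^{-D}$ the ball $B(a,t^{-1/D})$ is empty (since then $t^{-1/D}<R$ and $d(a,\cdot)\geq R$), so the integrand is $0$. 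Observe that $r^{-D}\leq R^{-D}$ precisely because $R\leq r$, so the interval $[r^{-D},R^{-D}]$ is the relevant one.

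I would then estimate the two resulting pieces. On $[0,r^{-D}]$ the level set is all of $B(a,r)$, contributing $r^{-D}\,\nu(B(a,r))\prec r^{-D}\cdot r^{D}=1$. On $[r^{-D},R^{-D}]$ the level set is $B(a,t^{-1/D})$ with radius $t^{-1/D}\leq r\leq 1$, so its measure is $\prec t^{-1}$ by Lemma \ref{lem:shadow-B}, giving
\[
\int_{r^{-D}}^{R^{-D}} t^{-1}\,\mathrm{d}t = \log(R^{-D})-\log(r^{-D}) = D\big(\eps\rho(a,o)+\log r\big),
\]
using $\log(R^{-1})=\eps\rho(a,o)$. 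Adding the two contributions and absorbing $D$ into $\prec$ yields $\int_{B(a,r)}d^{-D}(a,\omega)\,\mathrm{d}\nu(\omega)\prec 1+\eps\rho(a,o)+\log r$, and since $r\leq 1$ forces $\log r\leq 0$, this is $\leq 1+\eps\rho(a,o)$, as claimed.

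There is no serious obstacle; the computation is routine bookkeeping. The only points requiring care are correctly identifying the two cutoffs $r^{-D}$ and $R^{-D}$, checking $r^{-D}\leq R^{-D}$ (equivalently $R\leq r$, which is exactly the hypothesis on the range of $r$), and verifying that Lemma \ref{lem:shadow-B} applies throughout the second piece (there the radius $t^{-1/D}$ stays in $[0,1]$, since $t\geq r^{-D}\geq 1$). The final simplification to $1+\eps\rho(a,o)$ likewise only uses $r\leq 1$ to discard the nonpositive $\log r$ term.
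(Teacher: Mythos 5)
Your proposal is correct and follows essentially the same route as the paper: the same layer-cake decomposition at the cutoffs $r^{-D}$ and $\exp(\eps D\rho(a,o))=R^{-D}$, with Lemma \ref{lem:shadow-B} supplying both the bound $r^{-D}\nu(B(a,r))\prec 1$ on the first piece and the $\prec t^{-1}$ estimate on the second, and the emptiness of $B(a,r)$ for $r<\exp(-\eps\rho(a,o))$. The only cosmetic difference is that you integrate over $[0,\infty)$ and then locate the cutoffs, whereas the paper writes the split directly; the substance is identical.
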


\begin{proof}
	Denote $|a|=\rho(a,o)$. Then for any $\omega\in Z$ we have $\exp(-\eps|a|)\leq d(a,\omega)$, or equivalently, $d^{-D}(a,\omega)\leq \exp(\eps D |a|)$.

	Otherwise, we use the layer cake formula to get
	\begin{align*}
		\int_{B(a,r)}\frac{\mathrm{d}\nu(\omega)}{d^{D}(a,\omega)}
		&= \int\nolimits_0^{r^{-D}} \nu\!\left(B(a,r)\right)\mathrm{d}t
		 + \int\nolimits_{r^{-D}}^{\exp(\eps D |a|)} \nu\!\left(\{d^{-D}(a,\cdot) >t \} \right)\mathrm{d}t.
	\end{align*}
	The former of the two integrals above equals $r^{-D} \nu(B(a,r)) \prec 1$ by Lemma \ref{lem:shadow-B}.
	For the latter, we calculate:
	\begin{align*}
		\int\nolimits_{r^{-D}}^{\exp(\eps D |a|)} \nu\left(\{d^{-D}(a,\cdot) >t \} \right)\mathrm{d}t
		&=\int\nolimits_{r^{-D}}^{\exp(\eps D |a|)} \nu\left(\{d(a,\cdot) < t^{-1/D} \} \right)\mathrm{d}t\\
		&\prec \int\nolimits_{r^{-D}}^{\exp(\eps D |a|)} t^{-1}\mathrm{d}t 
		=\left[\log(t)\right]_{r^{-D}}^{\exp(\eps D |a|)}\\
		&=
		D(\eps |a| +\log(r)).
	\end{align*}
	Note that $\eps|a|+\log(r) > 0$. The claim follows.
\end{proof}

\begin{proof}[Proof of Proposition \ref{prop:bound-on-multipliers-1}]
	We follow the same scheme as \cite[Section 6]{Boucher-Spakula:2023}.

	Fix $a\in X$ and $\eta\in Z$. Denote $|a|=\rho(a,o)$ and
	$$
	U(\eta)= B(\eta, d(a,\eta)/2), \quad U(a) = B(a, d(a,\eta)/2).
	$$
	The set $U(a)$ is empty when $d(a,\eta)/2<\exp(-\eps|a|)$, or equivalently when $|a| + \frac{1}{\eps}\log(2) < \langle a,\eta\rangle_o$.

	Consider $\xi\in U(a)$. Then $d(a,\xi)<\frac{1}{2}d(a,\eta) \leq \frac{1}{2}(d(a,\xi)+d(\xi,\eta))$, and hence $d(a,\xi)\leq d(\xi,\eta)$.  Next, we have $d(a,\eta) \leq d(a,\xi)+d(\xi,\eta)\leq 2 d(\xi,\eta)$. Conversely, $d(\xi,\eta) \leq d(a,\xi)+d(a,\eta)\leq \frac{3}{2}d(a,\eta)$. Summarising, we have
	\begin{equation}\label{localeq:Ua-updown-bounds}
		\xi\in U(a)\implies d(a,\xi)\prec d(a,\eta)\asymp d(\xi,\eta). %
	\end{equation}
	Symmetrically, we have
	\begin{equation}\label{localeq:Ueta-updown-bounds}
		\xi\in U(\eta)\implies d(\xi,\eta) \prec d(a,\eta)\asymp d(a,\xi). %
	\end{equation}
	Finally, in $W := Z\setminus (U(a)\cup U(\eta))$, a similar calculation with the triangle inequality gives
	\begin{equation}\label{localeq:ZminusUs-updown-bounds}
		d(a,\eta)\leq 2d(a,\xi)\asymp d(\xi,\eta). %
	\end{equation}
	For easy reference, recall that for any $x,y\in\mathbb{C}$ we have
	\begin{equation}\label{localeq:x+y^p}
		|x-y|^p\leq 2^{p-1}(|x|^p+|y|^p).
	\end{equation}
	We now estimate the integral in the statement of the Proposition on three subsets of $Z$.

	On $W$, we have that $\frac{d(a,\eta)}{d(a,\xi)}\leq 2$. We use \eqref{localeq:x+y^p} and Lemma \ref{lem:sobolev-for-a-ball} to estimate
	\begin{align*}
		\int_W\frac{\left|1-\big[\frac{ d(a,\eta)}{ d(a,\xi)}\big]^{D/p+it}\right|^p}{ d^{D}(\xi,\eta)}\mathrm{d}\nu(\xi)
		&\prec
		\int_{Z\setminus U(\eta)}\frac{1}{ d^{D}(\xi,\eta)}\mathrm{d}\nu(\xi)
		\prec 1-\log d(a,\eta).
	\end{align*}

	On $U(a)$, when nonempty, we start with \eqref{localeq:x+y^p} again, then use \eqref{localeq:Ua-updown-bounds}. For the last step we use Lemma \ref{lem:shadow-B} and Lemma \ref{lem:int-over-ball-power-D} to get
	\begin{align*}
		\int_{U(a)}\frac{\left|1-\big[\frac{ d(a,\eta)}{ d(a,\xi)}\big]^{D/p+it}\right|^p}{ d^{D}(\xi,\eta)}\mathrm{d}\nu(\xi)
		&\prec \frac{\nu(U(a))}{d^{D}(a,\eta)}
		+ \int_{U(a)}\frac{1}{d^{D}(a,\xi)} \mathrm{d}\nu(\xi)\\
		&\prec 1+|a|.
	\end{align*}
	
	Finally, we deal with $U(\eta)$. Here $\frac{ d(a,\eta)}{ d(a,\xi)} \in [\frac{2}{3},2]$, and so the Mean Value Theorem will imply the first $\prec$ in the calculation below. Next, the triangle inequality gives that
	\begin{equation*}%
		\left|1-\tfrac{ d(a,\xi)}{ d(a,\eta)}\right| \leq \tfrac{d(\xi,\eta)}{d(a,\eta)}.
	\end{equation*}
	This, followed by \eqref{localeq:Ueta-updown-bounds} and Lemma \ref{lem:int-over-ball-calculation} finish the estimate:
	\begin{align*}
		\int_{U(\eta)}\frac{\left|1-\big[\frac{ d(a,\eta)}{ d(a,\xi)}\big]^{D+it}\right|^p}{ d^{D}(\xi,\eta)}\mathrm{d}\nu(\xi)
		&\prec_{t}
		\int_{U(\eta)}\frac{\left|1-\frac{ d(a,\eta)}{ d(a,\xi)}\right|^p}{ d^{D}(\xi,\eta)}\mathrm{d}\nu(\xi)\\
		&\leq
		\int_{U(\eta)} \frac{\mathrm{d}\nu(\xi)}{ d^p(a,\xi)d^{D-p}(\xi,\eta)}
		\prec_{t}
		1.
		\qedhere
	\end{align*}	
\end{proof}

\section{Proof of Theorem \ref{thm:main-s=0}}\label{sec:proof-of-s=0}

In this Section, we finish the proof of Theorem \ref{thm:main-s=0}. As mentioned above, the basic calculation (Proposition \ref{prop:basic-calculation}), together with the logarithmic Sobolev inequality (Theorem \ref{thm:log-sobolev-ineq}) and geometric control (Corollary \ref{coro:bound-on-multipliers-2}) imply that for any $g\in\Gamma$, $\mathcal{E}_{\log,p}(\pi^{(p)}_{it}(g)\phi) \prec_t (1+\rho(go,o))\cdot\|\phi|W^{\log,p}\|^p$.
To finish the bound on $W^{\log,p}$, we observe that $\pi^{(p)}_{it}(g)$ preserves $L^p$-norm:
\begin{align}\label{eq:pi-p-preserves-Lp-norm}
	\|\pi^{(p)}_{it}(g)\phi|L^p\|^p &= \int_{Z} \Db{g^{-1}}^D(\xi)\, |\phi(g^{-1}\xi)|^p\,\mathrm{d}\nu(\xi)
	=\int_Z |\phi(\omega)|^p\,\mathrm{d}\nu(\omega)
	=\|\phi|L^p\|^p.
\end{align}

The final piece is to show the lower bound on $\|\pi_{it}^{(p)}(g)|W^{\log,p}\|$. The constant function $\mathbf{1}$ has norm $1$ in $W^{\log,p}$ (under Assumption \ref{ass:3}). Using the logarithmic Sobolev inequality (Corollary \ref{cor:Wsp-in-LlogL}), we have
\begin{align*}
	\|\pi_{it}^{(p)}(g)\mathbf{1} \mid W^{\log,p}\|^p \succ
	\|\pi_{it}^{(p)}(g)\mathbf{1} \mid L^p\log L\|^p
	= \|(\pi_{it}^{(p)}(g)\mathbf{1})^p \mid L\log L\|,
\end{align*}
and the desired conclusion now follows from the next Proposition.

\begin{proposition}\label{prop:LlogL-of-pi-g-1-lower-bound}
	In the setting of Assumption \ref{ass:3}, let $p\geq1$. Then we have $\|(\pi_{it}^{(p)}(g)\mathbf{1})^p\mid L\log L\| \succ \rho(go,o)$, for any $t\in\mathbb{R}$ and $g\in\Gamma$.
\end{proposition}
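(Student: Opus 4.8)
The plan is to first identify the function whose $L\log L$--norm we must bound from below. Writing $\mathbf{1}$ for the constant function, formula \eqref{eq:pi-p-formula-first-version} gives $\pi_{it}^{(p)}(g)\mathbf{1}=\Db{g^{-1}}^{D/p-it}$, so its modulus raised to the $p$-th power is the nonnegative function $h:=\Db{g^{-1}}^{D}$; since the $L\log L$--norm only sees $|\cdot|$, the quantity to estimate is $\|h\mid L\log L\|$. By the change of variables \eqref{eq:change-of-var} and the cocycle relation \eqref{eq:metric-deriv-cocycle}, $h$ is exactly the Radon--Nikodym derivative of the $g$--action, so $\int_Z h\,\mathrm{d}\nu=1$; in particular $h$ is a probability density. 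For bounded $\rho(go,o)$ the desired inequality is then trivial: the left--hand side is bounded below by a multiple of $\|h\mid L^1\|=\int_Z h\,\mathrm{d}\nu=1$ via the continuous inclusion $L\log L\hookrightarrow L^1$ (recorded in Subsection \ref{subsec:orlicz-spaces}), while $\rho(go,o)$ stays bounded. So we may assume $\rho(go,o)$ is large.

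Next I would unwind the Luxemburg norm. Recall $\|h\mid L\log L\|=\inf\{k\ge0:\int_Z\Phi_1(h/k)\,\mathrm{d}\nu\le1\}$, and that $k\mapsto\int_Z\Phi_1(h/k)\,\mathrm{d}\nu$ is non-increasing; hence to prove $\|h\mid L\log L\|\ge k$ it suffices to exhibit one value of $k$ at which this integral exceeds $1$. I will take $k=\delta\,\rho(go,o)$ for a small constant $\delta>0$ to be fixed. The key simplification is the identity $\int_Z h\,G\,\mathrm{d}\nu=\int_Z G(g\omega)\,\mathrm{d}\nu(\omega)$ (again from \eqref{eq:change-of-var} and \eqref{eq:metric-deriv-cocycle}, taking $G\equiv1$ also reconfirms $\int_Z h\,\mathrm{d}\nu=1$), applied with $G=\log^+(h/k)$, together with the explicit formula $h(g\omega)=\D{g}^{-D}(\omega)=\exp\!\big(\eps D\rho(go,o)-2\eps D\langle\omega,go\rangle_o\big)$ from Subsection \ref{subsec:strong-hyperbolicity}. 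This turns the computation into an integral over $\omega$ of an affine function of the Gromov product $\langle\omega,go\rangle_o$.

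The estimate then rests on the fact that $\langle\omega,go\rangle_o$ is small for $\nu$--most $\omega$. Concretely, set $r_0=\exp(-\eps\rho(go,o)/4)$; then $\{\omega:\langle\omega,go\rangle_o\ge\rho(go,o)/4\}$ is exactly the ball $B(go,r_0)$, whose measure is $\prec r_0^D=\exp(-\eps D\rho(go,o)/4)$ by Lemma \ref{lem:shadow-B}, so $\nu(Z\setminus B(go,r_0))\ge\tfrac12$ once $\rho(go,o)$ is large. On this complementary set $\langle\omega,go\rangle_o<\rho(go,o)/4$, hence $h(g\omega)>\exp(\eps D\rho(go,o)/2)$ and $\log^+\!\big(h(g\omega)/k\big)\ge \tfrac{\eps D}{2}\rho(go,o)-\log\big(\delta\,\rho(go,o)\big)$. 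Integrating,
\[
\int_Z\Phi_1(h/k)\,\mathrm{d}\nu=\frac1k\int_Z\log^+\!\big(h(g\omega)/k\big)\,\mathrm{d}\nu(\omega)\ \ge\ \frac{1}{\delta\,\rho(go,o)}\cdot\frac12\Big(\tfrac{\eps D}{2}\rho(go,o)-\log\big(\delta\,\rho(go,o)\big)\Big),
\]
which tends to $\eps D/(4\delta)$ as $\rho(go,o)\to\infty$. Choosing $\delta<\eps D/4$ makes the right--hand side exceed $1$ for all large $\rho(go,o)$, whence $\|h\mid L\log L\|\ge\delta\,\rho(go,o)$, as required; combined with the bounded range this gives $\succ\rho(go,o)$.

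I expect the main difficulty to be bookkeeping rather than a deep obstacle: one must track the lower--order $\log\rho(go,o)$ correction and the bound $\nu(Z\setminus B(go,r_0))\ge\tfrac12$ uniformly for large $\rho(go,o)$, and keep straight the monotonicity of $k\mapsto\int_Z\Phi_1(h/k)\,\mathrm{d}\nu$ so that exceeding $1$ at a single $k$ yields the correct direction of the norm inequality. The only genuinely geometric input is Lemma \ref{lem:shadow-B} (that shadows of points of $X$ are comparable to metric balls on $Z$), which quantifies that $h$ concentrates on an exponentially small ball while being exponentially small on the bulk of $Z$ --- precisely the mechanism producing the linear growth of the entropy-type quantity $\|h\mid L\log L\|$.
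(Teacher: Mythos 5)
Your proof is correct, and it shares the paper's analytic skeleton --- identify the function as $h=\Db{g^{-1}}^{D}$ (a probability density), unwind the Luxemburg norm, and use the change of variables $\int_Z hG\,\mathrm{d}\nu=\int_Z G(g\omega)\,\mathrm{d}\nu(\omega)$ together with $h(g\omega)=\D{g}^{-D}(\omega)=\exp\big(\eps D\rho(go,o)-2\eps D\langle\omega,go\rangle_o\big)$ --- but the geometric input is genuinely different. The paper bounds $\log^+$ below by $\log$ and integrates the resulting affine function of the Gromov product \emph{exactly}, invoking the uniform first-moment bound $\int_Z\langle\omega,go\rangle_o\,\mathrm{d}\nu(\omega)\leq C$ of Lemma \ref{lem:bound-on-integral-of-Gromov-product} (whose proof uses Proposition \ref{prop:extend-pt-to-boundary}, hyperbolicity, and Lemma \ref{lem:int-over-ball-calculation}); this yields the clean explicit bound $\frac{eD\eps}{e+1}(\rho(go,o)-2C)$ valid as soon as $\rho(go,o)>2C$, with no lower-order terms to track. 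You instead keep $\log^+\geq 0$, discard the small ball $B(go,r_0)$ with $r_0=\exp(-\eps\rho(go,o)/4)$, and use only the tail estimate $\nu(B(go,r_0))\prec r_0^D$ of Lemma \ref{lem:shadow-B}; this bypasses Lemma \ref{lem:bound-on-integral-of-Gromov-product} and Proposition \ref{prop:extend-pt-to-boundary} altogether, at the cost of carrying the $\log(\delta\rho(go,o))$ correction and a largeness threshold, after which the bounded range is absorbed via $L\log L\hookrightarrow L^1$ and $\int_Z h\,\mathrm{d}\nu=1$. The two mechanisms are close cousins: your tail bound in fact implies the paper's moment bound by the layer-cake formula, since $\{\langle\cdot,go\rangle_o>s\}=B(go,e^{-\eps s})$ has measure $\prec e^{-\eps Ds}$. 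So your route is marginally more self-contained (one geometric lemma instead of two), while the paper's gives sharper, more explicit constants. The individual steps you rely on --- the identification of the sublevel set with a ball in the extended metric, the monotonicity of $k\mapsto\int_Z\Phi_1(h/k)\,\mathrm{d}\nu$ so that exceeding $1$ at one $k$ forces $\|h\mid L\log L\|\geq k$, and the sign conventions around $\log^+$ when restricting the domain of integration --- all check out.
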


\begin{proof}
	Let $C>0$ be the constant from Lemma \ref{lem:bound-on-integral-of-Gromov-product} below, and suppose that $g\in\Gamma$ satisfies $\rho(go,o)>2C$. Then for any $0 < k < \frac{eD\eps}{e+1}(\rho(go,o)-2C)$ we have
	\begin{align*}
		\int_{Z}\Phi_1\!\left((\pi_{it}^{(p)}(g)\mathbf{1})^p(\xi)/k\right)\mathrm{d}\nu(\xi)
		&= \frac{1}{k}\int_{Z} \Db{g^{-1}}^D(\xi)\cdot\log^+\!\left(\Db{g^{-1}}^D(\xi)/k\right)\mathrm{d}\nu(\xi)\\
		&= \frac{1}{k}\int_{Z} \log^+\!\left(\Db{g^{-1}}^D(g\omega)/k\right) \mathrm{d}\nu(\omega)\\
		&= \frac{1}{k}\int_{Z} \log^+\!\left(\D{g}^{-D}(\omega)/k\right) \mathrm{d}\nu(\omega)\\
		&\geq \frac{1}{k} \int_{Z} \log\!\left(\D{g}^{-D}(\omega)/k\right) \mathrm{d}\nu(\omega)\\
		&= \frac{1}{k}\log\frac{1}{k} + \frac{D\eps}{k}\int_{Z} \rho(go,o) - 2\langle\omega,go\rangle_o \,\mathrm{d}\nu(\omega)\\
		&\geq -\frac{1}{e} + \frac{D\eps}{k}(\rho(go,o) - 2C) > 1.
	\end{align*}
	From the definition of Luxemburg norm, for $g$ as above, we now have that $\|(\pi_{it}^{(p)}(g)\mathbf{1})^p\mid L\log L\| \geq \frac{eD\eps}{e+1}(\rho(go,o)-2C)$. Consequently, $\|(\pi_{it}^{(p)}(g)\mathbf{1})^p\mid L\log L\|\succ \rho(go,o)$ for any $g\in\Gamma$.
\end{proof}

\begin{lemma}\label{lem:bound-on-integral-of-Gromov-product}
	In the setting of Assumption \ref{ass:3}, there exists $C>0$, such that for any $a\in X$, $\int_{Z} \langle \omega,a\rangle_o \,\mathrm{d}\nu(\omega) \leq C$.
\end{lemma}

\begin{proof}
	Since the action of $\Gamma$ on $X$ is cocompact, we can assume that $a=go$ for some $g\in\Gamma$. By Proposition \ref{prop:extend-pt-to-boundary}, there is $\xi\in Z$ with $|g|\leq \langle go,\xi\rangle_o +M$ for a spatial constant $M\geq 0$. By hyperbolicity, for any $\omega\in Z$, we have $\min(\langle \omega,go\rangle_o, \langle go,\xi\rangle_o) \leq \langle \omega, \xi\rangle_o+\delta$. Thus either $\langle \omega,go\rangle_o\leq \langle \omega, \xi\rangle_o+\delta$ or $\langle \omega,go\rangle_o \leq |g| \leq \langle go,\xi\rangle_o + M \leq \langle \omega, \xi\rangle_o+\delta+M$; in either case $\langle \omega,go\rangle_o\leq \langle \omega, \xi\rangle_o+\delta+M$.
	
	Hence it suffices to bound $\int_{Z}\langle \omega,\xi\rangle_o\,\mathrm{d}\nu(\omega)$ by a constant, independently of $\xi\in Z$. However $\alpha\eps\langle \omega,\xi\rangle_o \leq \exp(\alpha\eps\langle \omega,\xi\rangle_o) = d^{-\alpha}(\omega,\xi)$ for any $\alpha>0$, so we are done by Lemma \ref{lem:int-over-ball-calculation}.

	Alternatively, one can perform a direct calculation using the Shadow Lemma.
\end{proof}

\section{Compact embedding}\label{sec:compact-embedding}
 
We aim for showing that $W^{\log,p}(Z)$ is compactly embedded in $L^p(Z)$ for $p>1$. We will use compactness of integral kernel operators given by off-diagonal cut-off kernels $K_\eps = \chi_{\{(\xi,\omega)\in Z^2 | d(\xi,\omega)>\eps\}}\cdot d^{-D}$ for $\eps>0$ (Proposition \ref{prop:compactness-of-kernel-ops}). Proposition \ref{prop:compact-embedding-general} shows how can one use this to obtain a compact embedding result, but we need to set up some notation first.

The argument presented here has a similar structure as \cite[Subsection 3.1]{Gerontogiannis-Mesland:2023}, but in a more general setting.

Suppose that $(Z,\nu)$ is a probability measure space. For a measurable function $K:Z^2\to\mathbb{C}$, we denote the corresponding integral kernel operator by
\begin{equation*}
	T_K(\phi)(\xi) = \int_{Z} K(\xi,\omega)\phi(\omega)\,\mathrm{d}\nu(\omega).
\end{equation*}
Next, if $K\geq0$, for $1\leq p<\infty$ we denote the "energy" by
\begin{equation*}
	\mathcal{E}_{p,K}(\phi) = \iint_{Z^2} |\phi(\xi)-\phi(\omega)|^p K(\xi,\omega)\,\mathrm{d}\nu(\xi)\mathrm{d}\nu(\omega),
\end{equation*}
and finally the "domain" by
\begin{equation*}
	W^{p,K}(Z) = \left\{\phi\in L^p(Z)\mid \mathcal{E}_{p.K}(\phi)<\infty \right\}.
\end{equation*}
When appropriate, we endow $W^{p,K}(Z)$ with a norm $\|\phi|W^{p,K}\|^p=\|\phi|L^p\|^p+\mathcal{E}_{p.K}(\phi)$.

\begin{proposition}\label{prop:compact-embedding-general}
	Let $(Z,\nu)$ be a probability measure space, and let $K:Z^2\to[0,\infty)$ be locally integrable on $Z^2\setminus\{\text{diagonal}\}$. Let $1\leq p<\infty$.

	Suppose that there is a sequence $K_n:Z^2\to[0,\infty]$ of kernels, such that for all $n\in\mathbb{N}$:
	\begin{enumerate}
		\item $K_n\leq K$ (pointwise);
		\item\label{item:cpct-emb-Tn-cpct} $T_{K_n}$ is a compact operator on $L^p(\nu)$;
		\item\label{item:Nn-defn} denoting $N_n(\xi)=\int_Z K_n(\xi,\omega)\,\mathrm{d}\nu(\omega)$, we have $\ess\inf N_n \to\infty$ as $n\to\infty$;
	\end{enumerate}
	Then $W^{p,K}$ is compactly embedded in $L^p(\nu)$.
\end{proposition}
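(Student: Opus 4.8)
The plan is to realise the inclusion $\iota\colon W^{p,K}\hookrightarrow L^p(\nu)$ as an operator-norm limit of compact operators, and then invoke the standard fact that the set of compact operators into a Banach space ($L^p$ here) is closed in operator norm. The compact operators approximating $\iota$ will be built out of the $T_{K_n}$, exploiting that $N_n\to\infty$ to make the "error" small.

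First I would record the pointwise identity: for $\phi\in W^{p,K}$ and a.e.\ $\xi$,
\[
N_n(\xi)\phi(\xi) = T_{K_n}(\phi)(\xi) + \int_Z \big(\phi(\xi)-\phi(\omega)\big)K_n(\xi,\omega)\,\mathrm{d}\nu(\omega).
\]
Since $\nu$ is a probability measure, $\mathbf{1}\in L^p$ and $N_n=T_{K_n}(\mathbf 1)$, so $N_n\in L^p$ is finite a.e.; because $T_{K_n}$ is an integral operator the $\omega$-integral splits absolutely a.e., making the identity valid. I then set $S_n$ to be $T_{K_n}$ followed by multiplication by $1/N_n$. For $n$ large, $\ess\inf N_n>0$, so $1/N_n\in L^\infty$, whence $S_n$ is a bounded operator on $L^p$ and, being a bounded operator composed with the compact $T_{K_n}$, is itself \emph{compact}. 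Dividing the identity by $N_n$ exhibits $\phi-S_n\phi$ as the normalised remainder $\frac1{N_n}\int(\phi(\xi)-\phi(\omega))K_n\,\mathrm{d}\nu(\omega)$.

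The crux is the estimate
\[
\|\phi - S_n\phi\,|\,L^p\|^p \;\le\; \frac{1}{\ess\inf N_n}\,\mathcal{E}_{p,K}(\phi).
\]
To obtain it I would apply H\"older in the $\omega$-integral with the factorisation $K_n=K_n^{1/p}K_n^{1/p'}$ (trivial when $p=1$), giving
\[
\Big|\int_Z (\phi(\xi)-\phi(\omega))K_n(\xi,\omega)\,\mathrm{d}\nu(\omega)\Big|^p \le N_n(\xi)^{p-1}\int_Z|\phi(\xi)-\phi(\omega)|^p K_n(\xi,\omega)\,\mathrm{d}\nu(\omega),
\]
then divide by $N_n(\xi)^p$, integrate in $\xi$, and use $K_n\le K$ (so $\mathcal{E}_{p,K_n}\le\mathcal{E}_{p,K}$). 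Consequently, viewing $S_n$ on $W^{p,K}$ via the bounded inclusion, $\|\iota-S_n\iota\,|\,W^{p,K}\to L^p\|\le (\ess\inf N_n)^{-1/p}\to 0$ by hypothesis~(3).

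Finally, each $S_n\iota\colon W^{p,K}\to L^p$ is compact (bounded inclusion followed by the compact $S_n$), and these converge to $\iota$ in operator norm; since $L^p$ is Banach, the limit $\iota$ is compact, i.e.\ $W^{p,K}$ is compactly embedded in $L^p(\nu)$. The one genuinely delicate point is the H\"older estimate above together with the a.e.\ absolute convergence and splitting of the kernel integrals; the remaining steps are soft functional analysis (a compact approximation of the identity).
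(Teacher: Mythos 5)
Your proposal is correct and follows essentially the same route as the paper: both form the normalised operators $\tfrac{1}{N_n}T_{K_n}$, bound $\|\phi-\tfrac{1}{N_n}T_{K_n}\phi\mid L^p\|^p$ by $(\ess\inf N_n)^{-1}\mathcal{E}_{p,K}(\phi)$ (your H\"older factorisation $K_n^{1/p}K_n^{1/p'}$ is exactly the paper's Jensen/H\"older step against the probability measures $K_n(\xi,\cdot)/N_n(\xi)\,\mathrm{d}\nu$), and conclude by compact approximation of the inclusion. Your explicit attention to the a.e.\ splitting of the kernel integral and to $1/N_n\in L^\infty$ is a minor tightening of details the paper leaves implicit.
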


\begin{proof}
	For a.e.~$\xi\in Z$, we have that $K_n(\xi,\cdot)/N_n(\xi)\,\mathrm{d}\nu(\cdot)$ is a probability measure on $Z$; in the next calculation, we use H\"older's inequality for these measures. Let $\phi\in W^{p,K}(Z)\subseteq L^p(Z)$. Then
	\begin{align*}
		\big\|\phi-\tfrac{1}{N_n}T_{K_n}(\phi) \mid L^p\big\|^p &=
		\int_Z \Big|\phi(\xi)- \tfrac{1}{N_n(\xi)}\int_Z K_n(\xi,\omega)\phi(\omega)\,\mathrm{d}\nu(\omega)\Big|^p\mathrm{d}\nu(\xi)\\
		&= \int_Z \Big|\int_Z\left(\phi(\xi)- \phi(\omega)\right)\cdot\frac{K_n(\xi,\omega)}{N_n(\xi)}\,\mathrm{d}\nu(\omega)\Big|^p\mathrm{d}\nu(\xi)\\
		&\leq \int_Z\int_Z |\phi(\xi)-\phi(\omega)|^p \cdot \frac{K_n(\xi,\omega)}{N_n(\xi)}\,\mathrm{d}\nu(\omega)\mathrm{d}\nu(\xi)\\
		&\leq (\ess\inf N_n)^{-1}\cdot \iint_{Z^2} |\phi(\xi)-\phi(\omega)|^p K_n(\xi,\omega)\,\mathrm{d}\nu(\omega)\mathrm{d}\nu(\xi)\\
		&\leq (\ess\inf N_n)^{-1}\cdot \mathcal{E}_{p,K}(\phi).
	\end{align*}
	Since the constant in the last formula tends to $0$ as $n\to\infty$, and $\frac{1}{N_n}T_{K_n}$ is compact, it follows that we can approximate any bounded set in $W^{p,K}(Z)$ by a compact set in $L^p(Z)$, in $L^p$-norm. Hence the inclusion $W^{p,K}(Z) \to L^p(Z)$ is compact.
\end{proof}

\begin{proposition}\label{prop:compactness-of-kernel-ops}
	Let $(Z,d,\nu)$ satisfy \ref{ass:1}, i.e.~it is an Ahlfors--David $D$-regular bounded metric measure space. For $\eps>0$, denote by $K_\eps:Z^2\to[0,\infty)$ the function $K_\eps = \chi_{\{(\xi,\omega)\in Z^2 | d(\xi,\omega)>\eps\}}\cdot d^{-D}$. Then for any $1<p<\infty$, $T_{K_\eps}$ is compact on $L^p(Z)$. Furthermore, the function $N_\eps$ (see \ref{prop:compact-embedding-general}/\eqref{item:Nn-defn}) satisfies $\ess\inf N_\eps \to\infty$ as $\eps\to 0^+$.
\end{proposition}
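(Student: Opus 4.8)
The plan is to handle the two assertions separately, beginning with the statement about $N_\eps$, which requires essentially no work beyond a direct application of Lemma \ref{lem:sobolev-for-a-ball}. By definition, for every $\xi\in Z$ we have $N_\eps(\xi)=\int_{Z\setminus B(\xi,\eps)} d^{-D}(\xi,\omega)\,\mathrm{d}\nu(\omega)$, which is exactly the integral estimated in Lemma \ref{lem:sobolev-for-a-ball}. The lower bound there gives $N_\eps(\xi)\geq cD\log(\diam(Z)/\eps)$ \emph{uniformly} in $\xi$, so $\ess\inf N_\eps \geq cD\log(\diam(Z)/\eps)\to\infty$ as $\eps\to0^+$, which is the second claim.

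For the compactness of $T_{K_\eps}$, the key observation is that $K_\eps$ is a \emph{bounded} kernel: on its support one has $d(\xi,\omega)>\eps$, hence $0\leq K_\eps\leq \eps^{-D}$. Since $(Z,\nu)$ is a probability space, this gives $K_\eps\in L^2(Z^2)$, so that $T_{K_\eps}$ is a Hilbert--Schmidt operator on $L^2(Z)$, and in particular compact on $L^2$. This settles the case $p=2$ immediately.

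To pass to general $1<p<\infty$ I would first record that $T_{K_\eps}$ is bounded on every $L^p$, $1\leq p\leq\infty$. Indeed $K_\eps$ is symmetric, and $\int_Z K_\eps(\xi,\omega)\,\mathrm{d}\nu(\omega)=N_\eps(\xi)$ is bounded above by the upper estimate in Lemma \ref{lem:sobolev-for-a-ball}, so Schur's test yields $\|T_{K_\eps}\mid L^p\to L^p\|\leq \|N_\eps\mid L^\infty\|<\infty$ for all $p$. The genuine obstacle — and the only nontrivial step — is upgrading compactness from the single exponent $p=2$ to the whole range $(1,\infty)$, since $L^2$-compactness does not transfer automatically. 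For this I would invoke Krasnoselskii's interpolation theorem for compact operators: an operator bounded on $L^{p_0}$ and $L^{p_1}$ and compact on one endpoint is compact on every intermediate $L^{p_\theta}$ (when $p_1<\infty$). Interpolating between the compact endpoint $p=2$ and the bounded endpoint $p=1$ covers $1<p<2$, and interpolating between $p=2$ and any finite $p_1>p$ covers $2<p<\infty$; since $T_{K_\eps}$ is bounded on all of these by the Schur estimate, we obtain compactness of $T_{K_\eps}$ on $L^p(Z)$ for every $1<p<\infty$, as required.
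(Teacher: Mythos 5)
Your proof is correct, and the $N_\eps$ part is identical to the paper's (both just quote the lower bound of Lemma \ref{lem:sobolev-for-a-ball}); but your compactness argument takes a genuinely different route. The paper handles compactness in a single step: a bounded kernel on a finite measure space is an $(L^p\!-\!L^p)$ Hille--Tamarkin kernel, and Hille--Tamarkin operators are compact, so $T_{K_\eps}$ is compact on every $L^p$, $1<p<\infty$, directly and with no interpolation. You instead prove compactness only at the exponent $p=2$ (Hilbert--Schmidt, since $K_\eps\in L^\infty(Z^2)\subset L^2(Z^2)$), get uniform boundedness on all $L^p$ from Schur's test applied to the symmetric kernel, and then propagate compactness across $(1,\infty)$ via Krasnoselskii's interpolation theorem. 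Your use of that theorem is legitimate: in both of your interpolations the compact endpoint is $L^2$, whose (finite) exponent is exactly what the hypothesis of Krasnoselskii's theorem requires, and the bounded endpoints ($p=1$, respectively any finite $p_1>p$) are covered by your Schur estimate. What your route buys is independence from Hille--Tamarkin theory, using only the classical Hilbert--Schmidt criterion plus a standard (though itself nontrivial) compactness-interpolation theorem; what the paper's route buys is brevity and a treatment of all exponents at once. Two cosmetic remarks: under Assumption \ref{ass:1} the measure is finite ($\nu(Z)\prec\diam(Z)^D$) but not necessarily a probability measure --- the normalization $\nu(Z)=1$ holds only under Assumption \ref{ass:3} --- though finiteness is all that your Hilbert--Schmidt and Schur steps actually use; and for the Schur bound the crude estimate $N_\eps\leq\eps^{-D}\nu(Z)$ would do just as well as the upper bound of Lemma \ref{lem:sobolev-for-a-ball}.
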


\begin{proof}
	As each $K_\eps$ is a bounded function on a finite measure space $Z^2$, the integral kernel operators $T_{K_\eps}$ are $(L^p-L^p)$-Hille--Tamarkin operators \cite[p. 403]{Jacob-Evans:2020}, and hence compact \cite[Theorem 15.15]{Jacob-Evans:2020}. This shows the first claim.

	For the second claim, Lemma \ref{lem:sobolev-for-a-ball} gives $N_\eps \succ \log(\diam(Z)/\eps)\to\infty$ as $\eps\to 0^+$.
\end{proof}

Proposition \ref{prop:main-cpct-embedding} now immediately follows from Propositions \ref{prop:compact-embedding-general} and \ref{prop:compactness-of-kernel-ops}.

\section{Mixing}\label{sec:mixing}

The aim of this section is to prove Propositions from the Introduction related to mixing: Proposition \ref{prop:main-subrep-of-ub} about unboundedness, the $p>1$ case of Proposition \ref{prop:mixing-in-intro} about mixing, and Proposition \ref{prop:pi-p-on-L-q-expo-growth} below about exponential growth.

\begin{proof}[Proof of Proposition \ref{prop:main-subrep-of-ub}]
	Suppose, for contradiction, that $\pi$ is uniformly bounded on $W$. Let $w\in W\setminus\{0\}$, and denote $\theta_0,\theta_1,\dots$ a dense subset of $V^*$. Note that one can apply the weak mixing condition to finitely many functionals $\theta$ at the same time; and use this inductively to obtain a sequence $(g_n)_{n=0}^{\infty}\subset \Gamma$, such that for all $n\in \mathbb{N}$, $|\theta_i(\pi(g_n)w)|<\frac{1}{n}$ for all $0\leq i\leq n$.

	As $\pi$ is assumed to be uniformly bounded on $W$, the set $\{\pi(g_n)w|n\in\mathbb{N}\}$ is bounded in $W$, hence pre-compact in $V$. Thus there exists a subsequence, continued to be denoted $(g_n)_{n\in\mathbb{N}}$, and a vector $v\in V$, such that $\pi(g_n)w\to v$ in norm in $V$. As $w\not=0$ and $\pi$ is uniformly bounded on $V$, the sequence $\|\pi(g_n)w\|$ is bounded away from $0$, and hence $v\not=0$.
	On the other hand, we also have that $\theta_i(v)=\lim_{n\to\infty}\theta_i(\pi(g_n)w)=0$ for all $i\in \mathbb{N}$. It follows that $v=0$ by density, which is a contradiction.
\end{proof}

We record some prelimiary observations, perhaps well--known to experts. In the rest of this section, we will denote $|g|=\rho(go,o)$ for $g\in\Gamma$ when working under Assumption \ref{ass:3}. 

\begin{proposition}\label{prop:int-g'-to-a-power}
	Suppose \ref{ass:3}.
	Then we have the following estimates for $\Xi_T(g)=\int_Z \D{g}^T(\xi)\,\mathrm{d}\nu(\xi)$, with the implied constants depending on $T\geq0$ but not on $g\in\Gamma$:
	\begin{enumerate}
	\item For $T=0$ or $T=D$, $\Xi_T(g)=1$.
	\item\label{case:2-of-prop-int-g'} For $0<T<\frac{D}{2}$, $\Xi_T(g)\asymp \exp(-\eps T|g|)$.
	\item\label{case:3-of-prop-int-g'} For $T=\frac{D}{2}$, $\Xi_T(g) \asymp |g|\cdot\exp(-\eps T|g|)$.
	\item\label{case:4-of-prop-int-g'} For $T>\frac{D}{2}$, $\Xi_T(g) \asymp \exp(\eps(T-D)|g|)$.
	\end{enumerate}
\end{proposition}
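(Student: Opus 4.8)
The plan is to reduce the whole statement to a single layer--cake estimate and then split into cases according to the sign of $\frac{D}{2}-T$. First I would invoke the formula for the metric derivative recorded in Subsection~\ref{subsec:strong-hyperbolicity} (and rewritten at the start of Section~\ref{sec:geometric-control}), namely $\D{g}(\omega)=d^{-2}(\omega,go)\exp(-\eps\rho(go,o))$. Writing $|g|=\rho(go,o)$, this immediately factors the target integral as
\begin{equation*}
	\Xi_T(g)=\exp(-\eps T|g|)\int_Z d^{-2T}(\omega,go)\,\mathrm{d}\nu(\omega).
\end{equation*}
The two exact endpoints in case~(1) drop out at once: for $T=0$ the integral is $\nu(Z)=1$, and for $T=D$ the integrand is $\D{g}^D$, so the change--of--variables formula~\eqref{eq:change-of-var} gives $\int_Z \D{g}^D\,\mathrm{d}\nu=\nu(gZ)=\nu(Z)=1$. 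Thus $\Xi_0(g)=\Xi_D(g)=1$ identically.

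For the remaining cases I set $\alpha=2T$, $a=go$, and estimate $I_\alpha(a)=\int_Z d^{-\alpha}(\omega,a)\,\mathrm{d}\nu(\omega)$. Since $0\le\langle a,\omega\rangle_o\le|a|$, the quantity $d(\omega,a)=\exp(-\eps\langle a,\omega\rangle_o)$ lies in $[\exp(-\eps|a|),1]$, so the integrand takes values in $[1,\exp(\eps\alpha|a|)]$. I would then apply the layer--cake formula, using that $\{d^{-\alpha}(\cdot,a)>t\}=B(a,t^{-1/\alpha})$, together with the two--sided ball estimate $\nu(B(a,r))\asymp r^D$ valid for $\exp(-\eps|a|)\le r\le 1$; the upper bound is Lemma~\ref{lem:shadow-B}, and the matching lower bound comes from the shadow inclusion $B(\theta,r/c)\subseteq B(a,r)$ in the proof of that lemma combined with Ahlfors regularity. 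This yields
\begin{equation*}
	I_\alpha(a)\asymp 1+\int_1^{\exp(\eps\alpha|a|)} t^{-D/\alpha}\,\mathrm{d}t.
\end{equation*}

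It then remains to evaluate the one--dimensional integral in the three regimes $\alpha<D$, $\alpha=D$, $\alpha>D$, i.e.\ $T<\frac{D}{2}$, $T=\frac{D}{2}$, $T>\frac{D}{2}$. When $D/\alpha>1$ the integral converges as $|a|\to\infty$ to a constant depending only on $T,D$, giving $I_\alpha(a)\asymp 1$ and hence $\Xi_T(g)\asymp\exp(-\eps T|g|)$, which is case~(2). When $D/\alpha=1$ the integral equals $\log\exp(\eps\alpha|a|)=\eps D|a|$, so $I_D(a)\asymp 1+|a|$ and $\Xi_T(g)\asymp(1+|g|)\exp(-\eps T|g|)$; this is case~(3) after noting $1+|g|\asymp|g|$. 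When $D/\alpha<1$ the integral is $\asymp\exp(\eps(\alpha-D)|a|)=\exp(\eps(2T-D)|a|)$, which dominates the $1$, and multiplying by $\exp(-\eps T|g|)$ collapses the exponent to $\eps(T-D)|g|$, giving case~(4).

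I expect the only genuinely delicate points to be bookkeeping rather than conceptual. The main one is securing the two--sided comparison $\nu(B(a,r))\asymp r^D$: Lemma~\ref{lem:shadow-B} as stated only records the upper bound, so I must explicitly extract the lower bound from the shadow argument in its proof, as this is exactly what drives the $\succ$ direction of all the estimates. The second subtlety is the passage $1+|g|\asymp|g|$ in the critical case~(3): this fails at the identity (where $\Xi_T=1$ but $|g|=0$), so I would note that by properness and cocompactness $\{g:|g|\le R\}$ is finite, so only finitely many elements have small $|g|$ and the comparison holds with uniform constants once the identity is excluded (or absorbed into the implied constant). Everything else is a routine layer--cake computation.
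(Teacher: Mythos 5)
Your proof is correct, and it reaches the result by a slightly different route than the paper. Both arguments begin with the identical reduction $\Xi_T(g)=\exp(-\eps T|g|)\int_Z d^{-2T}(\omega,go)\,\mathrm{d}\nu(\omega)$ (the paper writes the integrand as $\exp(2\eps T\langle\omega,go\rangle_o)$), and both handle $T=0,D$ by the change of variables. Where you diverge is in evaluating the remaining integral: the paper discretises $Z$ into annuli $A_k=\{k\alpha\le\langle\xi,go\rangle_o<(k+1)\alpha\}$ with $\nu(A_k)\asymp\exp(-D\eps\alpha k)$, quoting this decomposition from Nica's work, and then sums a geometric series with $N(g)\asymp|g|$ terms; you instead run a continuous layer--cake computation against the two-sided ball estimate $\nu(B(go,r))\asymp r^D$, in the spirit of the paper's own Lemma \ref{lem:int-over-ball-power-D}. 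These are two bookkeepings of the same geometric content (the annuli are differences of the balls you use), so neither is more general, but your version is more self-contained relative to the paper's own lemmas, at the cost of having to upgrade Lemma \ref{lem:shadow-B} to a two-sided bound --- which, as you correctly note, the shadow inclusion $B(\theta,r/c)\subseteq B(go,r)\subseteq B(\theta,cr)$ in its proof delivers.

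Two bookkeeping points deserve explicit care. First, the shadow inclusion only applies when $B(go,r)\neq\emptyset$, and non-emptiness is \emph{not} automatic for all $r\ge\exp(-\eps|g|)$: Proposition \ref{prop:extend-pt-to-boundary} only guarantees a boundary point $\xi$ with $d(go,\xi)\le e^{\eps M}\exp(-\eps|g|)$, so the ball may be empty in the window of scales between $\exp(-\eps|g|)$ and $e^{\eps M}\exp(-\eps|g|)$. This costs only a bounded multiplicative shift in the lower limit of your $t$-integral, hence only constants, but it must be said --- and note that this invocation of Proposition \ref{prop:extend-pt-to-boundary} is precisely the ingredient the paper uses for its lower bound on $N(g)$ in case (4), so your proof does not avoid it. Second, your observation about the degeneracy of case (3) at elements with $|g|=0$ is accurate (there the right-hand side vanishes while $\Xi_T\geq$ a positive constant, so such elements must genuinely be excluded rather than absorbed into constants); the paper's own proof produces $\asymp 1+|g|$ and silently has the same issue, so your treatment is, if anything, the more careful one.
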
 

\begin{proof}
	First, the claim is trivial when $T=0$, and the change of variable formula gives the case $T=D$.

	Next, we calculate
	\begin{align*}
		\int_Z \D{g}^{T}(\xi)\,\mathrm{d}\nu(\xi)
		&=
		\int_Z \exp\left(\epsilon T(2\langle \xi,go\rangle_o-|g|)\right) \mathrm{d}\nu(\xi)\\
		&= \exp(-\epsilon T|g|)\cdot \int_Z \exp\left(2\epsilon T\langle \xi,go\rangle_o)\right) \mathrm{d}\nu(\xi).
	\end{align*}
	For this, we use a cover of $\partial X$ from \cite[Proof of Proposition 20]{Nica:2013} into `annuli about $go$' (alternatively, one could use the Shadow Lemma directly): there exists a spatial constant $\alpha>0$ and sets $A_k=\{\xi\in \partial X\mid k\alpha \leq \langle \xi,go\rangle_o < (k+1)\alpha \}$ for $0\leq k\leq N(g)$ (more about the integer $N(g)$ below) such that $\nu(A_k)\asymp \exp(-D\epsilon\alpha k)$. Thus we have
	\begin{equation}\label{localeq:int-g'-sum}
		\int_Z \exp\left(2\epsilon T\langle \xi,go\rangle_o)\right) \mathrm{d}\nu(\xi)
		\asymp
		\sum_{k=0}^{N(g)} \exp\left((2T-D)\epsilon \alpha k \right).
	\end{equation}
	When $2T-D<0$, i.e.~$T<\frac{D}{2}$, the geometric series is convergent, and case \eqref{case:2-of-prop-int-g'} follows.

	For the remaining cases, we need to recall from \cite[Proof of Proposition 20]{Nica:2013} that $N(g)=\lfloor (\sup_{\xi\in \partial X}\langle \xi, go\rangle_o - \delta)/\alpha \rfloor -1$ (where $\delta$ is the hyperbolicity constant). It follows that $N(g) +1 \leq |g|/\alpha$.

	When $T=\frac{D}{2}$, by the above facts about $N(g)$, the sum in \eqref{localeq:int-g'-sum} is $\asymp |g|$, and we are done with \eqref{case:3-of-prop-int-g'}.

	When $T>\frac{D}{2}$, the sum in \eqref{localeq:int-g'-sum} is $\asymp \exp((2T-D)\epsilon\alpha N(g))\leq \exp((2T-D)\epsilon|g|)$, and we have obtained the $\prec$ direction of \eqref{case:4-of-prop-int-g'}.
	To obtain also $\succ$, by Proposition \ref{prop:extend-pt-to-boundary} there exists a spatial constant $C\geq 0$ so that $N(g)\geq |g|/\alpha -C$. The conclusion now follows.
\end{proof}

\begin{proof}[Proof of Proposition \ref{prop:mixing-in-intro}, case $p>1$]
Denote by $q$ the conjugate exponent to $p$, i.e. $\frac{1}{p}+\frac{1}{q}=1$. For $\phi\in L^p$ and $\psi\in L^q$, we want to show that
\(\lim_{g\to\infty}\langle \pi^{(p)}_{it}(g)\phi,\psi\rangle = 0\).
By density, we can assume that $\|\phi|L^{\infty}\|, \|\psi|L^{\infty}\| < \infty$. Then
\begin{align*}
	\left|\langle \pi^{(p)}_{it}(g)\phi,\psi\rangle\right|
	&=
	\left|\int_Z \Db{g^{-1}}^{\frac{D}{p}-it}(\xi)\cdot \phi(g^{-1}\xi)\cdot\overline{\psi(\xi)}\,\mathrm{d}\nu(\xi)\right|\\
	&\leq
	\|\phi|L^{\infty}\|\cdot \|\psi|L^{\infty}\| \cdot \int_Z \Db{g^{-1}}^{\frac{D}{p}}(\xi)\,\mathrm{d}\nu(\xi).
\end{align*}
As $g\to\infty \iff g^{-1}\to\infty$, it follows from Proposition \ref{prop:int-g'-to-a-power} that the integral above converges to $0$ as $g\to\infty$.
\end{proof}

\begin{remark}\label{rem:pi-p-dual-pi-q}
	If $1<p<\infty$ and $q$ are conjugate exponents (i.e.~$1/p+1/q=1$), a straightforward calculation with the change of variable formula implies that for any $\phi,\psi\in L^\infty$ and $z\in\mathbb{C}$ we have
	\(
		\langle \pi^{(p)}_{z}(g)\phi,\psi\rangle
		=
		\langle \phi,\pi^{(q)}_{-\overline{z}}(g^{-1})\psi\rangle.
	\)
	Thus, in particular, if $t\in\mathbb{R}$ and $1<r,r'<\infty$ with $1/r + 1/r'=1$, then the adjoint of $\pi^{(p)}_{it}(g):L^r\to L^r$ is $\pi^{(q)}_{it}(g^{-1}):L^{r'}\to L^{r'}$. 
\end{remark}

\begin{proposition}\label{prop:pi-p-on-L-q-expo-growth}
	If $1<p,r<\infty$, $p\not=r$, then $\pi^{(p)}_{it}$ grows exponentially on $L^r$; more precisely $\|\pi^{(p)}_{it}\mid L^r\to L^r\| \asymp \exp\left(\left|\frac{1}{p}-\frac{1}{r}\right|\cdot|g|\right)$ for all $g\in\Gamma$.
\end{proposition}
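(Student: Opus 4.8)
The plan is to compute the $L^r$-operator norm of $\pi^{(p)}_{it}(g)$ \emph{exactly} and then estimate it geometrically. First I would unwind $\|\pi^{(p)}_{it}(g)\psi\mid L^r\|^r$. Since $\D{g}$ is a positive function we have $|\Db{g^{-1}}^{-it}|=1$, so the imaginary part of the exponent disappears upon taking moduli to the $r$-th power; the change of variable $\omega=g^{-1}\xi$ together with \eqref{eq:change-of-var} and the cocycle relation \eqref{eq:metric-deriv-cocycle} (which yields $\Db{g^{-1}}(g\omega)=\D{g}^{-1}(\omega)$) turns the integral into
\begin{equation*}
	\|\pi^{(p)}_{it}(g)\psi\mid L^r\|^r = \int_Z \D{g}^{D(1-r/p)}(\omega)\,|\psi(\omega)|^r\,\mathrm{d}\nu(\omega).
\end{equation*}
The right-hand side is exactly $\|\D{g}^{D(1/r-1/p)}\psi\mid L^r\|^r$, so $\pi^{(p)}_{it}(g)$ has the same $L^r$-operator norm as multiplication by the weight $\D{g}^{D(1/r-1/p)}$. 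Since the norm of a multiplication operator on $L^r$ is the $L^\infty$-norm of the weight, we obtain
\begin{equation*}
	\|\pi^{(p)}_{it}(g)\mid L^r\to L^r\| = \ess\sup_{\omega\in Z}\D{g}^{D(1/r-1/p)}(\omega).
\end{equation*}

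Next I would evaluate this essential supremum. Because $\nu$ is Ahlfors--David regular it has full support, and $\D{g}$ is (Lipschitz, hence) continuous, so the essential supremum equals the genuine supremum over the compact space $Z$. Substituting $\D{g}(\omega)=\exp(2\eps\langle\omega,go\rangle_o-\eps|g|)$ reduces the whole problem to controlling $\langle\omega,go\rangle_o$: the exponent $D(1/r-1/p)$ is positive exactly when $r<p$ and negative when $r>p$, so the supremum is attained by maximizing, respectively minimizing, $\langle\omega,go\rangle_o$ over $\omega\in Z$.

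The two universal bounds $0\le\langle\omega,go\rangle_o\le|g|$ immediately give the desired upper bound on the norm in both cases. For the matching lower bound I need near-extremal boundary points. When $r<p$, Proposition \ref{prop:extend-pt-to-boundary} supplies $\xi\in Z$ with $\langle\xi,go\rangle_o\ge|g|-M$, forcing the supremum up to the required size. When $r>p$, I instead need a point where $\langle\omega,go\rangle_o$ stays bounded independently of $g$: fixing two distinct boundary points $\eta_1\ne\eta_2$ (possible by non-elementarity) and applying the hyperbolic inequality $\min(\langle\eta_1,go\rangle_o,\langle\eta_2,go\rangle_o)\le\langle\eta_1,\eta_2\rangle_o+\delta$ shows one of them has Gromov product with $go$ bounded by a spatial constant, exactly as in the proof of Proposition \ref{prop:extend-pt-to-boundary}. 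Combining the two cases yields $\|\pi^{(p)}_{it}(g)\mid L^r\to L^r\|\asymp\exp\!\big(\eps D\,|\tfrac1p-\tfrac1r|\,|g|\big)$, i.e.~the asserted exponential growth (with the spatial constant $\eps D$ sitting in the exponent).

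The only real subtlety I anticipate is the lower bound in the case $r>p$: the upper bounds and the whole $r<p$ case follow directly from facts already recorded, whereas exhibiting a boundary point with uniformly small Gromov product against $go$ is precisely where non-elementarity must be used. As a cross-check (and, if desired, a way to avoid this step entirely), one can reduce $r>p$ to $r<p$ via the duality of Remark \ref{rem:pi-p-dual-pi-q}: the adjoint of $\pi^{(p)}_{it}(g)$ on $L^r$ is $\pi^{(q)}_{it}(g^{-1})$ on $L^{r'}$, and since $|\tfrac1q-\tfrac1{r'}|=|\tfrac1p-\tfrac1r|$ and $|g^{-1}|=|g|$, the two growth rates coincide.
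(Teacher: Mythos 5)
Your proposal is correct, and it takes a genuinely different route from the paper's. The paper argues asymmetrically: its upper bound is a one-sided form of your change-of-variables computation, giving $\|\pi^{(p)}_{it}(g)\mid L^r\to L^r\|\leq\|\Db{g^{-1}}^{D(\frac1p-\frac1r)}\mid L^\infty\|$, while its lower bound comes from testing on the constant function $\mathbf{1}$, which produces the integral $\Xi_{Dr/p}(g^{-1})=\int_Z\Db{g^{-1}}^{Dr/p}\,\mathrm{d}\nu$ and is then estimated via case (4) of Proposition \ref{prop:int-g'-to-a-power} (whose proof rests on the annuli/Shadow Lemma decomposition of the boundary); the case $p>r$ is reduced to $p<r$ by the duality of Remark \ref{rem:pi-p-dual-pi-q}. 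You instead observe that the change of variables is an exact identity valid for every $\psi$, so that $\pi^{(p)}_{it}(g)$ has precisely the norm of the multiplication operator by the weight $\D{g}^{D(\frac1r-\frac1p)}$, namely $\ess\sup_Z\D{g}^{D(\frac1r-\frac1p)}$, and you then evaluate this supremum pointwise: Proposition \ref{prop:extend-pt-to-boundary} furnishes a near-maximizing point of $\langle\cdot,go\rangle_o$ when $r<p$, and non-elementarity together with the hyperbolic inequality (the same device the paper uses in Lemma \ref{lem:bound-on-integral-of-Gromov-product}) furnishes a point with uniformly bounded Gromov product when $r>p$. Your route buys a sharper conclusion --- an exact norm formula, with both cases treated symmetrically and no duality needed --- and avoids the integral asymptotics of Proposition \ref{prop:int-g'-to-a-power} entirely; the paper's route recycles machinery it needs anyway for the mixing results and dispatches one case by duality. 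Finally, as you correctly flag, both arguments actually produce the rate $\exp\left(\eps D\left|\frac1p-\frac1r\right||g|\right)$, with the spatial constant $\eps D$ sitting in the exponent; this is how the $\asymp$ in the proposition's statement must be read, and your proof makes that explicit.
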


\begin{proof}
	If $p>r$, then by Remark \ref{rem:pi-p-dual-pi-q}, the norm of $\pi^{(p)}_{it}(g)$ on $L^r$ equals the norm of $\pi^{(p')}_{it}(g^{-1})$ on $L^{r'}$, there $p'$ and $r'$ are conjugate to $p$ and $r$ respectively. We then have $p'<r'$ and $1/p'-1/r'=1/r-1/p=|1/p-1/r|$, so it suffices to prove the claim when $p<r$.

	We estimate
	\begin{align*}
		\left\| \pi^{(p)}_{it}(g) \mid L^r\to L^r \right\|^r
		&\geq
		\| \pi^{(p)}_{it}(g)\mathbf{1} | L^r \|^r
		=
		\int_Z \Db{g^{-1}}^{\frac{D}{p}\cdot r}(\xi)\,\mathrm{d}\nu(\xi)
		=
		\Xi_{Dr/p}(g^{-1})
	\end{align*}
	with the notation from Proposition \ref{prop:int-g'-to-a-power}. The lower bound now follows from \eqref{case:4-of-prop-int-g'} of that Proposition.

	For the upper bound, take an arbitrary $\phi\in L^r$ and estimate:
	\begin{align*}
		\left\| \pi^{(p)}_{it}(g) \phi \right\|^r
		&\leq
		\int_Z \Db{g^{-1}}^{\frac{D}{p}\cdot r}(\xi)\cdot|\phi|^r(g^{-1}\xi)\,\mathrm{d}\nu(\xi)\\
		&=
		\int_Z \Db{g^{-1}}^{\frac{D}{p}\cdot r}(g\omega)\cdot \D{g}^D(\omega)\cdot |\phi|^r(\omega)\,\mathrm{d}\nu(\omega)\\
		&=
		\int_Z \Db{g^{-1}}^{D(\frac{r}{p}-1)}(g\omega)\cdot |\phi|^r(\omega)\,\mathrm{d}\nu(\omega)\\
		&\leq
		\left\|\Db{g^{-1}}^{D(\frac{1}{p}-\frac{1}{r})}\mid L^{\infty}\right\|^r\cdot \left\|\phi|L^r\right\|^r.
	\end{align*}
	We also have $\D{g}(\xi) = \exp(\epsilon(2\langle \xi,go\rangle_o-|g|))\leq \exp(\epsilon |g|)$, and since $|g|=|g^{-1}|$, the conclusion follows.
\end{proof}

\section{The case of $p=1$}\label{sec:L1-case}

For this section we work under Assumption \ref{ass:3}. We focus on the case $p=1$ and $t=0$, i.e.~the representation $\pi^{(1)}_{0}$. It is not mixing on the whole of $L^1$, since $\langle \pi^{(1)}_0(g)\mathbf{1},\mathbf{1} \rangle = \|\pi^{(1)}_0(g)\mathbf{1}|L^1\| = \int_Z \D{g}^D(\xi)\,\mathrm{d}\nu(\xi) = 1$ for all $g\in\Gamma$.

Denote by $L^1_0 = \{ \phi\in L^1 \mid \int_Z \phi\,\mathrm{d}\nu = 0\}$ the codimension $1$ subspace of functions with zero average.
Denote $W^{\log,1}_0(Z) = W^{\log,1}(Z)\cap L^1_0$, a (closed and codimension $1$) subspace of $W^{\log,1}(Z)$. Observe that $\pi^{(1)}_{0}$ leaves $W^{\log,1}_0(Z)$ invariant (this uses just the change of variable formula \eqref{eq:change-of-var}, cf.~\eqref{eq:pi-p-preserves-Lp-norm}); denote the restriction as $\pi=\pi^{(1)}_{0}|_{W^{\log,1}_0}$. Finally, note that that our proof of the lower bound on the norm of $\pi^{(1)}_{0}$ in Theorem \ref{thm:main-s=0} uses $\pi^{(1)}_{0}(\cdot)\mathbf{1}$, hence does not apply to $\pi$.

\begin{question}
	Assuming \ref{ass:3}, is $\pi$ uniformly bounded?
\end{question}

Considering Proposition \ref{prop:main-subrep-of-ub}, it is currently not known to the authors whether $W^{\log,1}_0$ is compactly embedded in $L^1_0$, however $\pi^{(1)}_{0}$ is weakly mixing on $L^1_0$; this is the $p=1$ case of Proposition \ref{prop:mixing-in-intro}.

\begin{proposition}\label{prop:pi^1_0-is-mixing-on-L^1_0}
	The representation $\pi^{(1)}_{0}$ is weakly mixing on $L^1_0(\partial X)$.
\end{proposition}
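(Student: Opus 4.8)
The plan is to realise the pairing explicitly and then reduce the problem to a Lebesgue-differentiation statement for the pushforward measures $g_*\nu$. First I would identify the dual: since $L^1_0$ has codimension $1$ in $L^1$ and the annihilator of $L^1_0$ in $(L^1)^*=L^\infty$ is exactly the constants, every functional on $L^1_0$ is represented by some $\psi\in L^\infty$ (uniquely modulo constants), acting by $\phi\mapsto\int_Z\phi\,\overline\psi\,\mathrm d\nu$. A change of variables using the cocycle \eqref{eq:metric-deriv-cocycle} and \eqref{eq:change-of-var} (exactly as in \eqref{eq:pi-p-preserves-Lp-norm}) then gives the clean formula
\[
\langle\pi^{(1)}_0(g)\phi,\psi\rangle=\int_Z\phi(\omega)\,\overline{\psi(g\omega)}\,\mathrm d\nu(\omega)=\int_Z\overline\psi\,\mathrm d\big(g_*(\phi\,\mathrm d\nu)\big),
\]
so that the adjoint of $\pi^{(1)}_0(g)$ on $L^\infty$ is simply composition $\psi\mapsto\psi\circ g$. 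Since $\pi^{(1)}_0(g)$ is an isometry of $L^1$ preserving $L^1_0$ (again \eqref{eq:pi-p-preserves-Lp-norm}), the family $\{\pi^{(1)}_0(g)\}$ is uniformly bounded on $L^1_0$; hence, by a standard $\varepsilon/3$ argument, it suffices to verify the weak-mixing condition for $\phi$ ranging over the dense subspace $L^\infty\cap L^1_0$ of bounded mean-zero functions.

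Next I would exploit the mean-zero condition. Fix such a $\phi$ and $\psi\in L^\infty$. Because $\int_Z\phi\,\mathrm d\nu=0$ (equivalently $\pi^{(1)}_0(g)\phi\in L^1_0$ for all $g$), I may subtract any constant $c$ from $\psi$ without changing the pairing, obtaining
\[
\big|\langle\pi^{(1)}_0(g)\phi,\psi\rangle\big|=\Big|\int_Z\phi(\omega)\big(\overline{\psi(g\omega)}-\bar c\big)\,\mathrm d\nu(\omega)\Big|\le\|\phi|L^\infty\|\int_Z|\psi-c|\,\mathrm d\big(g_*\nu\big).
\]
Now choose $a\in\partial X$ to be a Lebesgue point of $\psi$ (these have full $\nu$-measure, since $(Z,d,\nu)$ is doubling) and set $c=\psi(a)$. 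Because the action is cocompact, $a$ is a conical limit point, so I can pick $g_n\to\infty$ with $g_no\to a$ conically; then $g_n\omega\to a$ for $\nu$-a.e.\ $\omega$ (convergence dynamics), the measures $g_{n*}\nu$ concentrate at $a$, and it remains to prove
\[
\int_Z|\psi-\psi(a)|\,\mathrm d\big(g_{n*}\nu\big)\longrightarrow0 .
\]

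The final, and main, step is this last convergence. The difficulty is precisely that $\psi$ is only in $L^\infty$: weak-$*$ convergence $g_{n*}\nu\to\delta_a$ tests only against continuous functions, so it does not by itself control $\int|\psi-\psi(a)|\,\mathrm d(g_{n*}\nu)$. To get past this I would use the geometry. Writing $R_n=\exp(-\eps|g_n|)\to0$ and letting $\theta_n$ be the shadow direction of $g_no$ (so $\theta_n\to a$ and, by the conical approach, $d(\theta_n,a)\prec R_n$), the explicit density $\tfrac{\mathrm d g_{n*}\nu}{\mathrm d\nu}=\Db{g_n^{-1}}^{D}$ together with the Shadow Lemma (Lemma \ref{lem:shadow-B}) shows that $g_{n*}\nu$ is dominated by a constant multiple of the normalised restriction of $\nu$ to $B(\theta_n,R_n)$, up to a tail of mass $g_{n*}\nu\big(Z\setminus B(\theta_n,CR_n)\big)\to0$. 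The tail contributes at most $2\|\psi|L^\infty\|\cdot g_{n*}\nu(Z\setminus B(\theta_n,CR_n))\to0$, while on $B(\theta_n,CR_n)\subseteq B(a,C'R_n)$ Ahlfors regularity turns the main term into a constant multiple of the Lebesgue average $\nu(B(a,C'R_n))^{-1}\int_{B(a,C'R_n)}|\psi-\psi(a)|\,\mathrm d\nu$, which tends to $0$ since $a$ is a Lebesgue point. Combining the two estimates gives $\langle\pi^{(1)}_0(g_n)\phi,\psi\rangle\to0$, and hence weak mixing of $\pi^{(1)}_0$ on $L^1_0$. I expect the quantitative comparison of $g_{n*}\nu$ with the normalised ball --- that is, the Shadow-Lemma bookkeeping together with the conical control $d(\theta_n,a)\prec R_n$ --- to be the technical heart of the argument.
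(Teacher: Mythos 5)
Your proposal is correct in substance, and it reaches the result by a route that is conceptually parallel to, but technically independent of, the paper's. The paper deduces the proposition from Corollary \ref{coro:weak-convergence-to-flat}, whose only nontrivial input is the strong-almost-transitivity lemma of Bader--Furman (Lemma \ref{lem:Bader-Furman-SAT}, i.e.\ \cite[Lemma 5.1]{Bader-Furman:2017}): for $\phi\in L^1$ and a.e.\ $\xi$, any sequence with $g_n^{-1}o\to\xi$ non-tangentially satisfies $\int|\phi(g_n^{-1}\eta)-\phi(\xi)|\,\mathrm{d}\nu(\eta)\to 0$. Your key step, namely $\int_Z|\psi-\psi(a)|\,\mathrm{d}(g_{n*}\nu)\to 0$ for $g_no\to a$ conically, is exactly this statement (with $g_n$ in place of $g_n^{-1}$) in the special case that suffices here: $\psi\in L^\infty$ and $a$ a Lebesgue point. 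So in effect you re-prove the needed instance of the Bader--Furman lemma from scratch, via the Shadow Lemma, Ahlfors regularity and Lebesgue differentiation for doubling measures, instead of citing it. What your route gains is self-containedness, together with the correct extra observation that the sequence $(g_n)$ can be chosen depending only on the functional $\psi$ and then works simultaneously for all bounded mean-zero $\phi$; what the paper's route gains is brevity and a stronger intermediate statement (Corollary \ref{coro:weak-convergence-to-flat} identifies the limit of the matrix coefficients as $\phi(\xi)\int\psi\,\mathrm{d}\nu$ for all $\psi\in L^1$, not merely that it can be made small). Your preliminary reductions --- the identification of $(L^1_0)^*$ with $L^\infty$ modulo constants, the pairing formula $\langle\pi^{(1)}_0(g)\phi,\psi\rangle=\int\phi\cdot\overline{\psi\circ g}\,\mathrm{d}\nu$, the density argument using that $\pi^{(1)}_0$ is isometric on $L^1$, and the use of the mean-zero hypothesis to replace $\psi$ by $\psi-\psi(a)$ --- are all correct.

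One detail needs repair. For \emph{fixed} $C$, the tail mass $g_{n*}\nu\bigl(Z\setminus B(\theta_n,CR_n)\bigr)$ does \emph{not} tend to $0$ as $n\to\infty$: the density of $g_{n*}\nu$ is $\exp\bigl(D\eps(2\langle\cdot,g_no\rangle_o-|g_n|)\bigr)$, and a layer-cake computation in the spirit of Lemmas \ref{lem:shadow-B} and \ref{lem:int-over-ball-calculation} only gives that this tail is $O(C^{-D})$ uniformly in $n$ (and in general it is comparable to $C^{-D}$, as one checks for the free group). This does not break your argument, but it forces a two-parameter limit: for fixed $C$, the main term is bounded by a constant times $C^{D}$ times the average of $|\psi-\psi(a)|$ over $B(a,C'R_n)$, which tends to $0$ by the Lebesgue point property, while the tail is at most $2\|\psi|L^\infty\|\cdot O(C^{-D})$; taking $\limsup_{n\to\infty}$ first and then letting $C\to\infty$ yields the claim. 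With this adjustment your proof is complete.
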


We defer the proof until the end of the Section. Denoting $b:\Gamma\to W^{\log,1}(Z)$, $b(g)=\pi^{(1)}_0(g)(\mathbf{1})-\mathbf{1}$, we see that $b$ is in fact a cocycle for $\pi$, i.e. $b(g)\in W^{\log,1}_0(Z)$. Furthermore, the proof of the lower norm bound for $\pi^{(1)}_0$ in fact establishes that this cocycle is proper; $\|b(g)\|\asymp \rho(go,o)$ (cf.~Proposition \ref{prop:LlogL-of-pi-g-1-lower-bound}).

A positive answer to the above Question  would provide an alternative construction for a proper uniformly Lipschitz action of hyperbolic groups on a closed subspace of $L^1$; this has been established by other methods in \cite{Drutu-Mackay:2023,Vergara:2021,Vergara:2023}.

\begin{proposition}
	Suppose \ref{ass:1}.
	Then $W^{\log,p}(Z)$ is a closed subspace of an $L^p$-space. Consequently, $W^{\log,1}_0(Z)$ is a closed subspace of an $L^1$-space.
\end{proposition}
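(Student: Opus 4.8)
The plan is to exhibit an explicit linear isometric embedding of $W^{\log,p}(Z)$ into a concrete $L^p$-space, and then deduce closedness from completeness. The guiding observation is that the norm $\|\phi|W^{\log,p}\|^p = \|\phi|L^p\|^p + \mathcal{E}_{\log,p}(\phi)$ is an $\ell^p$-combination of an $L^p(Z,\nu)$ norm and an $L^p$ norm of the difference function $(\xi,\eta)\mapsto \phi(\xi)-\phi(\eta)$ against a suitable measure on $Z^2$.

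Concretely, I would first introduce the measure $\mu$ on $Z^2$ given by $\mathrm{d}\mu(\xi,\eta) = d^{-D}(\xi,\eta)\,\mathrm{d}\nu(\xi)\mathrm{d}\nu(\eta)$. This is a well-defined Borel measure, and it is $\sigma$-finite: writing $Z^2 = \bigcup_n \{d>1/n\}$, the kernel $d^{-D}$ is bounded on each piece while $\nu\times\nu$ is finite, so $\mu$ is finite on each piece. (It is not finite overall, as the integral of $d^{-D}$ against $\nu$ diverges logarithmically near the diagonal by Lemma \ref{lem:sobolev-for-a-ball}, but $\sigma$-finiteness is all that the $L^p$ formalism requires.) Thus $L^p(Z^2,\mu)$ is a genuine $L^p$-space.

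Next I would define $J(\phi) = (\phi, F_\phi)$ with $F_\phi(\xi,\eta) = \phi(\xi)-\phi(\eta)$, mapping into the $\ell^p$-direct sum $L^p(Z,\nu)\oplus_p L^p(Z^2,\mu)$, normed by $\|(f,h)\|^p = \|f|L^p(\nu)\|^p + \|h|L^p(\mu)\|^p$. The only computation is unwinding definitions: $\|F_\phi|L^p(\mu)\|^p = \mathcal{E}_{\log,p}(\phi)$ by \eqref{eq:defn-E_p,log}, whence $\|J(\phi)\|^p = \|\phi|W^{\log,p}\|^p$, so $J$ is a linear isometry. I would check that $J$ descends to equivalence classes: since $\mu \ll \nu\times\nu$, altering $\phi$ on a $\nu$-null set changes $F_\phi$ only on a $\mu$-null set. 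Finally, the $\ell^p$-direct sum of $L^p$-spaces over $\sigma$-finite measure spaces is itself isometrically an $L^p$-space, namely $L^p(Z\sqcup Z^2,\, \nu\sqcup\mu)$ over the disjoint union carrying the juxtaposed measure. The first assertion then follows immediately: $W^{\log,p}(Z)$ is complete (it is a Banach space), so its isometric image under $J$ is complete, hence a closed subspace of this $L^p$-space.

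For the stated consequence, I would note that $W^{\log,1}_0(Z)$ is a closed subspace of $W^{\log,1}(Z)$; applying $J$ with $p=1$, it maps onto a closed subspace of the closed subspace $J(W^{\log,1}(Z))$ of $L^1(Z\sqcup Z^2)$, and a closed subset of a closed set is closed. I do not expect any genuine obstacle here: every step is routine functional analysis. The only points requiring a moment of care are the $\sigma$-finiteness and well-definedness of the target measure $\mu$, and the standard identification of an $\ell^p$-sum of $L^p$-spaces with a single $L^p$-space over a disjoint union.
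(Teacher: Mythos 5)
Your proposal is correct and follows essentially the same route as the paper: the same measure $\mathrm{d}\mu = d^{-D}\,\mathrm{d}\nu\,\mathrm{d}\nu$ on $Z^2$, the same isometric embedding $\phi\mapsto(\phi,\,\phi(\xi)-\phi(\eta))$ into $L^p(Z,\nu)\oplus_p L^p(Z^2,\mu)$, and closedness via completeness. Your added details (the $\sigma$-finiteness argument, the identification of the $\ell^p$-sum with $L^p$ over a disjoint union, and the passage to $W^{\log,1}_0$) are exactly the "standard arguments" the paper leaves implicit.
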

\begin{proof}
	Define a measure on $Z\times Z$ by $\mathrm{d}\mu(\xi,\eta) = d^{-D}(\xi,\eta)\mathrm{d}\nu(\xi)\mathrm{d}\nu(\eta)$. By standard arguments, this is a $\sigma$-finite Borel measure on $Z\times Z$. For a measurable $\phi:Z\to\mathbb{C}$, denote $D(\phi)(\xi,\eta) = \phi(\xi)-\phi(\eta)$.
	Then the linear map
	\begin{align*}
		T&:W^{\log,p}(Z)\to L^p(Z)\oplus_{p} L^p(Z\times Z; \mu), &\phi\mapsto (\phi,D(\phi))
	\end{align*}
	is an isometry (just from the definitions of the norms), and thus has a closed range.
\end{proof}

We close the section by proving Proposition \ref{prop:pi^1_0-is-mixing-on-L^1_0}, however we need to recall a result from \cite{Bader-Furman:2017}. Suppose \ref{ass:3}.
We say that a sequence $(x_{n})_{n\in\mathbb{N}}\subset X$ \emph{converges non-tangentially} to $\xi\in\partial X$, if there exists $M\geq 0$, such that $x_{n}\to\xi$ and $\langle x_{n},\xi\rangle_o\geq d(x_n,o)-M$. For a non-example, consider the free group $X=F_2=F(a,b)$, endowed the word metric $d$ from the standard generating set, $o=1$. Let $x_{n}=a^{2n}b^{n}$, $\xi=\lim_{n\to\infty} x_{n}=\lim_{n\to\infty} a^n$. Then $d(x_n,o)=3n$, but $\langle x_n,\xi\rangle_o=2n$.

In CAT($-1$) setting, points $\xi\in\partial X$ for which there exists a sequence in $X$ converging to it non-tangentially are often called \emph{conical limit points}, see e.g.~\cite[page 17]{Roblin:2003}. In our setting every $\xi\in\partial X$ admits such a sequence, see e.g.~\cite[pages 14-15]{Bader-Furman:2017} for a construction.

\begin{lemma}[{\cite[Lemma 5.1]{Bader-Furman:2017}}]\label{lem:Bader-Furman-SAT}
	Suppose \ref{ass:3}.
	For any $\phi\in L^1(\nu)$, there exists $Z_{\phi}\subseteq \partial X$ with $\nu(Z_{\phi})=1$, such that for any $(g_n)_{n\in\mathbb{N}}\subseteq \Gamma$ with $(g_n^{-1}o)_{n\in\mathbb{N}}$ converging non-tangentially to $\xi\in Z_{\phi}$, we have
	\begin{align*}
		\lim_{n\to\infty} \int_{\partial X} \left| \phi(g_n^{-1}\eta) - \phi(\xi) \right|\mathrm{d}\nu(\eta) =0.
	\end{align*}
\end{lemma}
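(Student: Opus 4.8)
The plan is to transport the integral to a statement about concentration of measures, and then to tame the failure of uniformity by a maximal-function argument.

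First I would rewrite the integrand using the change of variables \eqref{eq:change-of-var}. Substituting $\zeta=g_n^{-1}\eta$ (so $\mathrm{d}\nu(\eta)=\D{g_n}^D(\zeta)\,\mathrm{d}\nu(\zeta)$) gives
\[
I_n:=\int_{\partial X}\bigl|\phi(g_n^{-1}\eta)-\phi(\xi)\bigr|\,\mathrm{d}\nu(\eta)=\int_{\partial X}\bigl|\phi(\zeta)-\phi(\xi)\bigr|\,\mathrm{d}\mu_n(\zeta),\qquad \mathrm{d}\mu_n=\D{g_n}^D\,\mathrm{d}\nu,
\]
where each $\mu_n$ is a probability measure satisfying $\mu_n(A)=\nu(g_nA)$. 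For continuous $\phi$ the claim is immediate once one knows $\mu_n\to\delta_\xi$ weakly. The whole difficulty is that $\phi$ is only integrable while $\mu_n$ is increasingly singular with respect to $\nu$, so an $L^1$-approximation $\phi\approx\psi$ does \emph{not} control $\int|\phi-\psi|\,\mathrm{d}\mu_n$ by $\|\phi-\psi\,|\,L^1\|$.

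The geometric core — and the step I expect to be the main obstacle — is to show that $\mu_n$ is dominated by a ball-averaging approximate identity centred at $\xi$. Writing $u_n=\D{g_n}^D$, I would combine the explicit formula for $\D{g_n}$ from Subsection~\ref{subsec:strong-hyperbolicity}, the shadow lemma (Lemma~\ref{lem:shadow-B}), and the integral estimates of Lemmas~\ref{lem:int-over-ball-calculation} and \ref{lem:int-over-ball-power-D} to establish a uniform ``bump'' bound. With $\delta_n=\exp(-\eps\rho(g_n^{-1}o,o))\to0$ (the radius of the shadow of $g_n^{-1}o$), the non-tangential hypothesis $\langle g_n^{-1}o,\xi\rangle_o\ge\rho(g_n^{-1}o,o)-M$ is exactly what forces this shadow to be a genuine ball $B(\xi,\asymp\delta_n)$ and the conformal density $u_n$ to behave, up to a multiplicative constant depending only on $M$ and the space, like
\[
u_n(\zeta)\ \asymp\ \frac{\delta_n^{D}}{\max(\delta_n,d(\zeta,\xi))^{2D}}.
\]
A dyadic decomposition into annuli about $\xi$, using Ahlfors regularity, then yields both $\mu_n\to\delta_\xi$ weakly and the domination
\[
\int_{\partial X}F\,\mathrm{d}\mu_n\ \le\ C\,\mathcal{M}F(\xi)\qquad(F\ge0),
\]
where $\mathcal{M}$ is the Hardy--Littlewood maximal operator and $C=C(M,Z)$ is independent of $n$. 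Proving this density asymptotics robustly (getting the correct powers, and pinning down how non-tangentiality rules out a skewed or over-concentrated limit) is where the real work lies.

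Finally I would assemble the almost-everywhere statement. Since $(\partial X,d,\nu)$ is Ahlfors regular, hence doubling, $\mathcal{M}$ is of weak type $(1,1)$ and the Lebesgue differentiation theorem holds. Choose continuous $\psi_m$ with $\|\phi-\psi_m\,|\,L^1\|\le 4^{-m}$; then $\nu(\{\mathcal{M}(\phi-\psi_m)>2^{-m}\})\le C2^{-m}$ is summable, so by Borel--Cantelli the set $Z_\phi$ of Lebesgue points $\xi$ of $\phi$ at which $\mathcal{M}(\phi-\psi_m)(\xi)\to0$ has full measure; on $Z_\phi$ we take $\phi(\xi)$ to be the Lebesgue value, and one checks $|\psi_m(\xi)-\phi(\xi)|\le\mathcal{M}(\phi-\psi_m)(\xi)$. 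For $\xi\in Z_\phi$ the triangle inequality gives
\[
I_n\ \le\ \int_{\partial X}\bigl|\psi_m(\zeta)-\psi_m(\xi)\bigr|\,\mathrm{d}\mu_n(\zeta)\ +\ \int_{\partial X}|\phi-\psi_m|\,\mathrm{d}\mu_n\ +\ |\psi_m(\xi)-\phi(\xi)|.
\]
The last two terms are at most $(C+1)\,\mathcal{M}(\phi-\psi_m)(\xi)$ by the domination, while the first tends to $0$ as $n\to\infty$ for each fixed $m$ by weak convergence $\mu_n\to\delta_\xi$. Letting $n\to\infty$ and then $m\to\infty$ yields $I_n\to0$, as required.
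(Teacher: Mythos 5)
The paper does not actually prove this lemma: it cites \cite[Lemma 5.1]{Bader-Furman:2017} and asserts that the same proof carries through in the present setting. Your argument is correct and self-contained, and it follows what is essentially the route of the cited reference: show that along a non-tangential sequence the densities $\D{g_n}^D$ form a bump-type approximate identity at $\xi$, dominate integration against them by the Hardy--Littlewood maximal operator (legitimate because Ahlfors regularity makes $\nu$ doubling, so the maximal operator is weak $(1,1)$ and the Lebesgue differentiation theorem holds), and conclude at Lebesgue points via Borel--Cantelli. All the individual steps check out: the change of variables, the dyadic-annulus proof of the domination $\int F\,\mathrm{d}\mu_n\prec \mathcal{M}F(\xi)$ with constant depending only on $M$ and the space, the inequality $|\psi_m(\xi)-\phi(\xi)|\leq\mathcal{M}(\phi-\psi_m)(\xi)$ at Lebesgue points, and the final three-term estimate.

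One point deserves flagging, in your favour. Your bump estimate $u_n(\zeta)\asymp\delta_n^D/\max(\delta_n,d(\zeta,\xi))^{2D}$ is the correct one, but it rests on the identity $\D{g}(\zeta)=\exp\bigl(2\eps\langle\zeta,g^{-1}o\rangle_o-\eps\rho(go,o)\bigr)=e^{-\eps\rho(go,o)}\,d^{-2}(\zeta,g^{-1}o)$, i.e.\ the metric derivative of $g$ is large on the shadow of $g^{-1}o$; this is what the geometric mean value property \eqref{eq:GMV} and the change of variables \eqref{eq:change-of-var} force (test it on a tree). The formula displayed in Subsection \ref{subsec:strong-hyperbolicity} has $go$ where this requires $g^{-1}o$ --- as literally written it is the formula for $\Db{g^{-1}}$. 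Had you used it verbatim, the measures $\mu_n$ would appear to concentrate near the shadow of $g_no$ rather than near $\xi=\lim g_n^{-1}o$, and the non-tangentiality hypothesis would not connect to anything. So your orientation is the right one; the discrepancy is a typo in the paper (harmless in its other sections, where every use is symmetric under $g\leftrightarrow g^{-1}$ because $\rho(go,o)=\rho(g^{-1}o,o)$), not a gap in your proof.
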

We stated the Lemma slighly more generally, however the same proof as in \cite{Bader-Furman:2017} carries through. This is essentially the statement that the action of $\Gamma$ on $\partial X$ is \emph{strongly almost transitive}, see \cite[Section 2]{Bader-Furman:2017}.

\begin{corollary}\label{coro:weak-convergence-to-flat}
	Suppose \ref{ass:3}.
	For any $\phi\in L^{\infty}(\nu)$ there exists $Z_{\phi}\subset \partial X$ with full measure, such that for any $(g_{n})_{n\in\mathbb{N}}\subseteq \Gamma$ with $g_n^{-1}o\to \xi\in Z_{\phi}$ non-tangetially, and any $\psi\in L^{1}(\nu)$, we have
	\begin{equation}\label{eq:weak-convergence-to-flat}
		\langle \pi^{(1)}_{0}(g_n^{-1})\psi,\phi\rangle \to \phi(\xi)\cdot \textstyle\int_{\partial X}\psi\,\mathrm{d}\nu.
	\end{equation}
\end{corollary}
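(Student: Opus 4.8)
The plan is to transfer the group action off $\psi$ and onto $\phi$ by a change of variables, which cancels the derivative cocycle, and then to invoke Lemma \ref{lem:Bader-Furman-SAT}. First I would write out $\pi^{(1)}_{0}(g^{-1})\psi(\eta)=\D{g}^D(\eta)\,\psi(g\eta)$ from \eqref{eq:pi-p-formula-first-version} and substitute $\omega=g\eta$, using the change of variable formula \eqref{eq:change-of-var} (which turns $\D{g}^D(\eta)\,\mathrm{d}\nu(\eta)$ into $\mathrm{d}\nu(\omega)$), to obtain the identity
\[
	\langle \pi^{(1)}_{0}(g^{-1})\psi,\phi\rangle = \int_{\partial X} \psi(\omega)\,\overline{\phi(g^{-1}\omega)}\,\mathrm{d}\nu(\omega)
\]
valid for all $g\in\Gamma$, $\psi\in L^1(\nu)$, $\phi\in L^\infty(\nu)$. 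This is the key step: the Jacobian disappears, and the whole question reduces to controlling the behaviour of $\phi\circ g_n^{-1}$.

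Next I would take $Z_\phi\subseteq\partial X$ to be the full-measure set provided by Lemma \ref{lem:Bader-Furman-SAT} applied to $\phi$, which is legitimate since $\phi\in L^\infty\subseteq L^1$ (as $\nu(\partial X)=1$). For $\xi\in Z_\phi$ and any $(g_n)$ with $g_n^{-1}o\to\xi$ non-tangentially, the Lemma gives $\phi(g_n^{-1}\cdot)\to\phi(\xi)$ in $L^1(\nu)$. I would first prove \eqref{eq:weak-convergence-to-flat} for bounded test functions $\psi\in L^\infty(\nu)$, where the estimate
\[
	\left| \int_{\partial X}\psi\,\big(\overline{\phi(g_n^{-1}\cdot)}-\overline{\phi(\xi)}\big)\,\mathrm{d}\nu \right| \leq \|\psi\mid L^\infty\|\cdot \big\|\phi(g_n^{-1}\cdot)-\phi(\xi)\mid L^1\big\|\longrightarrow 0
\]
shows that the pairing converges to $\overline{\phi(\xi)}\int_{\partial X}\psi\,\mathrm{d}\nu$, which is the asserted limit (the conjugate here is merely dictated by the convention chosen for $\langle\cdot,\cdot\rangle$).

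Finally I would extend to arbitrary $\psi\in L^1(\nu)$ by density. Since $\pi^{(1)}_{0}(g_n^{-1})$ is an isometry of $L^1$ (this is \eqref{eq:pi-p-preserves-Lp-norm} with $p=1$) and $\phi\in L^\infty$, the functionals $\psi\mapsto\langle\pi^{(1)}_{0}(g_n^{-1})\psi,\phi\rangle$ satisfy $|\langle\pi^{(1)}_{0}(g_n^{-1})\psi,\phi\rangle|\leq \|\psi\mid L^1\|\,\|\phi\mid L^\infty\|$ uniformly in $n$, and the limiting functional $\psi\mapsto\overline{\phi(\xi)}\int\psi$ is bounded as well. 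A standard $3\varepsilon$-argument, approximating $\psi\in L^1$ by bounded functions in $L^1$-norm, then upgrades convergence from the dense subspace $L^\infty$ to all of $L^1$.

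The main obstacle is exactly the integrability mismatch exploited in the middle step: Lemma \ref{lem:Bader-Furman-SAT} only yields $L^1$-convergence of $\phi\circ g_n^{-1}$, which pairs cleanly against an $L^\infty$ test function but not directly against a general $\psi\in L^1$. Resolving this is what forces the two-stage structure, and it is the $L^1$-isometry property of $\pi^{(1)}_{0}$ that supplies the uniform equicontinuity needed to make the final density passage legitimate.
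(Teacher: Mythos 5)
Your proposal is correct and follows essentially the same route as the paper: change variables via \eqref{eq:change-of-var} to cancel the Jacobian, apply Lemma \ref{lem:Bader-Furman-SAT} to $\phi$ to get $L^1$-convergence of $\phi\circ g_n^{-1}$ to the constant $\phi(\xi)$, pair against $\psi\in L^\infty$, and extend to all of $L^1$ by density using the $L^1$-isometry of $\pi^{(1)}_0$ (the paper performs this reduction at the outset in one sentence, you defer it to the end with an explicit $3\varepsilon$-argument, but it is the same mechanism). The only cosmetic difference is the complex conjugate, which as you note is a matter of the convention for the pairing.
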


\begin{proof}
	As $L^{\infty}(\nu)$ is dense in $L^1(\nu)$ and $\pi^{(1)}_{it}$ is isometric on $L^1(\nu)$, we can assume that $\psi\in L^{\infty}$. Let $Z_{\phi}$ be as in Lemma \ref{lem:Bader-Furman-SAT}. Take a $(g_n)\subseteq\Gamma$ and $\xi\in Z_{\phi}$ as in the statement. Now
	\begin{align*}
		\langle \pi^{(1)}_{0}(g_n^{-1})\psi,\phi\rangle
		&=
		\int_{\partial X} \D{g_n}^{D}(\eta)\cdot \psi(g_n\eta)\cdot\phi(\eta)\,\mathrm{d}\nu(\eta)
		=\int_{\partial X} \psi(\omega)\cdot\phi(g_n^{-1}\omega)\,\mathrm{d}\nu(\omega),
	\end{align*}
	and thus
	\begin{align*}
		\left|\langle \pi^{(1)}_{0}(g_n^{-1})\psi,\phi\rangle - \phi(\xi)\cdot \textstyle\int_{\partial X}\psi\,\mathrm{d}\nu\right|
		&\leq
		\int_{\partial X} |\psi(\omega)|\cdot \left| \phi(g_n^{-1}\omega) -\phi(\xi) \right| \mathrm{d}\nu(\omega)\\
		&\leq \|\psi|L^{\infty}\|\cdot \int_{\partial X} \left| \phi(g_n^{-1}\omega) -\phi(\xi) \right| \mathrm{d}\nu(\omega),
	\end{align*}
	which converges to $0$ by Lemma \ref{lem:Bader-Furman-SAT}.
\end{proof}

Proposition \ref{prop:pi^1_0-is-mixing-on-L^1_0} now immediately follows from the above Corollary: for any $\psi\in L^1_0$ and $\phi\in L^{\infty}$, for any sequence $(g_n)_{n\in\mathbb{N}}$ as in the above Corollary, the matrix coefficients will converge to the right-hand side of \eqref{eq:weak-convergence-to-flat}, which is zero.

\bibliographystyle{plain}
\bibliography{log-sobolev}

\end{document}